\def\DateTime{05/April/2017}
\def\Version{1.0}
\def\yes{\if00}
\def\no{\if01}
\def\iftenpt{\no}
\def\ifelevenpt{\no}
\def\iftwelvept{\yes}
\def\ifquery{\yes}
\def\sbullet{{\scriptscriptstyle\bullet}}
\theoremstyle{plain}
\newtheorem{Theorem}{Theorem}[section]
\newtheorem{Proposition}[Theorem]{Proposition}
\newtheorem{Lemma}[Theorem]{Lemma}
\newtheorem{Corollary}[Theorem]{Corollary}
\newtheorem{Claim}{Claim}[Theorem]
\theoremstyle{definition}
\newtheorem{Definition}[Theorem]{Definition}
\newtheorem{Remark}[Theorem]{Remark}
\newtheorem{DRemark}[Theorem]{{\scriptsize \textdbend}\,Remark}
\newtheorem{Example}[Theorem]{Example}
\numberwithin{equation}{section}
\def\rom{\textup}
\newcommand{\ZZ}{{\mathbb{Z}}}
\newcommand{\QQ}{{\mathbb{Q}}}
\newcommand{\RR}{{\mathbb{R}}}
\newcommand{\KK}{{\mathbb{K}}}
\newcommand{\CC}{{\mathbb{C}}}
\newcommand{\PP}{{\mathbb{P}}}
\newcommand{\OO}{{\mathcal{O}}}
\newcommand{\DDD}{{\mathscr{D}}}
\newcommand{\XXX}{{\mathscr{X}}}
\newcommand{\Proj}{\operatorname{Proj}}
\newcommand{\Pic}{\operatorname{Pic}}
\newcommand{\Rat}{\operatorname{Rat}}
\newcommand{\Hom}{\operatorname{Hom}}
\newcommand{\Ker}{\operatorname{Ker}}
\newcommand{\Spec}{\operatorname{Spec}}
\newcommand{\Supp}{\operatorname{Supp}}
\newcommand{\ord}{\operatorname{ord}}
\newcommand{\adeg}{\widehat{\operatorname{deg}}}
\newcommand{\zeros}{\operatorname{div}}
\newcommand{\Div}{\operatorname{Div}}
\newcommand{\aDiv}{\widehat{\operatorname{Div}}}
\newcommand{\vol}{\operatorname{vol}}
\newcommand{\avol}{\widehat{\operatorname{vol}}}
\newcommand{\an}{\operatorname{an}}
\newcommand{\PDiv}{\operatorname{PDiv}}
\newcommand{\pp}{{\mathfrak{p}}}
\def\query#1{\setlength\marginparwidth{65pt} 
\marginpar{\raggedright\fontsize{7.81}{9} 
\selectfont\upshape\hrule\smallskip 
#1\par\smallskip\hrule}} 
\def\query#1{}
\newcommand{\ndot}{\raisebox{.4ex}{.}}
\definecolor{rose}{rgb}{1.0, 0.0, 0.5}
\definecolor{rosewood}{rgb}{0.4, 0.0, 0.04}
\definecolor{darkspringgreen}{rgb}{0.09, 0.45, 0.27}
\definecolor{orange-red}{rgb}{1.0, 0.27, 0.0}
\begin{document}

\title[Sufficient conditions for the Dirichlet property]
{Sufficient conditions for the Dirichlet property}
\author{Huayi Chen}
\address{Universit\'{e} Paris Diderot - Paris 7,
Institut de Math\'{e}matiques de Jussieu - Paris Rive Gauche,
B\^{a}timent Sophie Germain,
Bo\^{i}te Courrier 7012,
75205 Paris Cedex 13, France
}
\email{huayi.chen@imj-prg.fr}
\author{Atsushi Moriwaki}
\address{Department of Mathematics, Faculty of Science,
Kyoto University, Kyoto, 606-8502, Japan}
\email{moriwaki@math.kyoto-u.ac.jp}
\date{\DateTime, (Version \Version)}
\begin{abstract}
The effectivity up to $\mathbb R$-linear equivalence (Dirichlet property) of pseudoeffective adelic $\mathbb R$-Cartier divisors is a subtle problem in arithmetic geometry. In this article, we propose sufficient conditions for the Dirichlet property by using the dynamic system in the classic Arakelov geometry setting. We also give a numerical criterion of the Dirichlet property for adelic $\mathbb R$-Cartier divisors on curves over a trivially valued field.
\end{abstract}

\maketitle

\section{Introduction}
Let $K$ be a number field and $X$ be a normal, projective and geometrically integral scheme over $\Spec K$. Recall that an \emph{adelic $\mathbb R$-Cartier divisor} on $X$ is by definition a couple $\overline D=(D,g)$, where $D$ is an $\mathbb R$-Cartier divisor on $X$ and $g=(g_v)_{v\in M_K}$ is a family of Green functions indexed by the set $M_K$ of all places of $K$. Note that the adelic $\mathbb R$-Cartier divisors on $X$ form a vector space $\widehat{\mathrm{Div}}_{\mathbb R}(X)$ over $\mathbb R$ and one has a natural $\mathbb R$-linear homomorphism from $\mathrm{Rat}(X)_{\mathbb R}^{\times}:=\mathrm{Rat}(X)^{\times}\otimes_{\mathbb Z}\mathbb R$ to $\widehat{\mathrm{Div}}_{\mathbb R}(X)$, where $\mathrm{Rat}(X)$ denotes the field of rational functions on $X$. Two adelic $\mathbb R$-Cartier divisors are said to be \emph{$\mathbb R$-linearly equivalent} if their difference is $\RR$-principal, that is, lies in the image of the canonical map $\mathrm{Rat}(X)_{\mathbb R}^{\times}\rightarrow\widehat{\mathrm{Div}}_{\mathbb R}(X)$. Several positivity conditions in algebraic geometry (in particular bigness and 
pseudo-effectivity
 of $\mathbb R$-Cartier divisors) have the counterpart in the setting of adelic $\mathbb R$-Cartier divisors. We refer the readers to \cite{Moriwaki16} for more details.

In \cite{MoD}, Moriwaki has compared the pseudo-effectivity and the effectivity up to $\mathbb R$-linear equivalence in the setting of adelic $\mathbb R$-Cartier divisors in introducing the so-called \emph{Dirichlet property}. We say that an adelic $\mathbb R$-Cartier divisor satisfies the Dirichlet property if it is $\mathbb R$-linearly equivalent to an effective adelic $\mathbb R$-Cartier divisor. It can be shown that the Dirichlet property implies the pseudo-effectivity. It is then quite natural to ask if all pseudo-effective $\mathbb R$-Cartier divisors satisfy the Dirichlet property, and if it is not the case, how to determine the Dirichlet property of pseudo-effective divisors.

It is worth to mention that the first question above has a confirmative answer if 
$X = \Spec K$. 
It can be deduced from the Dirichlet 
unit
theorem for number fields, from which comes the terminology of Dirichlet property, see \cite{MoD} for more details. Moreover, it has been shown in the same reference that a pseudo-effective adelic $\mathbb R$-Cartier divisor $(D,g)$ with $D$ principal also satisfies the Dirichlet property. However, in higher dimensional case the first question above has a negative answer. We refer the readers to \cite{DsysDirichlet} for counterexamples and obstructions to the Dirichlet property. 

The problem of Dirichlet property can be stated in the classic algebraic geometry setting and other arithmetic settings such as arithmetic varieties over a function field or a field with trivial valuation. The purpose of this article is to provide some sufficient conditions for the Dirichlet property in divers situations, with highlights on the role of finiteness conditions.
We first focus on the number field case, using the dynamic system on arithmetic varieties. Let $X$ be a normal, projective and geometrically integral scheme over a number field $K$. Let $f : X \to X$ be a surjective endomorphism over $K$.
Let $D$ be an effective and ample $\RR$-Cartier divisor on $X$. We assume that
there are $d \in \RR_{>1}$ and $\varphi \in \Rat(X)^{\times}_{\RR}$ such that
$f^*(D) = d D + (\varphi)$.
By \cite[Section~3]{DsysDirichlet},
there is a unique family $g = \{ g_v \}_{v \in M_K}$ of $D$-Green functions of $C^0$-type
with $f^*(D, g) = d(D, g) + \widehat{(\varphi)}$.
The pair $\overline{D} = (D, g)$ is called the \emph{canonical
compactification} of $D$. 
Note that $\overline{D}$ is nef in the arithmetical sense (for details, see \cite[Lemma~4.1]{DsysDirichlet}).
We establish the following sufficient condition for the Dirichlet property.  

\begin{Theorem}\label{Thm:dynamic example}Suppose that there exists a finite dimensional vector subspace of $\mathrm{Rat}(X)_{\mathbb R}^{\times}$ which contains $\varphi$ and is stable by $f^*$.
Then $\overline{D}$ satisfies the Dirichlet property. In particular, if $f^*(D) = dD$,
then $\overline{D}$ satisfies the Dirichlet property.
\end{Theorem}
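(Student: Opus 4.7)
The plan is to realize $\overline{D}$ as the limit of a sequence of effective adelic $\RR$-Cartier divisors obtained by dynamical iteration, whose deviation from $\overline{D}$ lies in $V$ under the principal-divisor map, and then to extract an effective representative of the $\RR$-linear equivalence class of $\overline{D}$ by compactness in the finite-dimensional space $V$. Since $D$ is effective and ample, choose a $D$-Green function $g_0$ of $C^0$-type with $g_{0,v}\geq 0$ at every $v\in M_K$, so that $\overline{D}_0:=(D,g_0)$ is effective. For $n\geq 0$, set $\overline{D}_n:=d^{-n}(f^n)^*\overline{D}_0$; the surjectivity of $f$ ensures each $\overline{D}_n$ is effective.

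Iterating $f^*\overline{D}=d\overline{D}+\widehat{(\varphi)}$ gives
\[
(f^n)^*\overline{D}=d^n\overline{D}+\widehat{(d^n\psi_n)},\qquad \psi_n:=\sum_{i=0}^{n-1}\frac{1}{d^{i+1}}(f^i)^*\varphi,
\]
and the $f^*$-stability of $V$, together with $\varphi\in V$, forces $\psi_n\in V$. Writing $\overline{D}_0=\overline{D}+(0,g_0-g)$, one obtains
\[
\overline{D}_n=\overline{D}+\widehat{(\psi_n)}+\epsilon_n,\qquad \epsilon_n:=\bigl(0,\;d^{-n}(f^n)^*(g_0-g)\bigr).
\]
Because $g_0-g$ is a bounded continuous function on each compact Berkovich space $X_v^{\rm an}$ and vanishes at all but finitely many $v$, the error $\epsilon_n$ tends to zero uniformly.

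The effectivity of $\overline{D}_n$ now imposes two uniform constraints on $\psi_n$: the divisor bound $(\psi_n)\geq -D$ and the Green-function bound $\log|\psi_n|_v\leq \tfrac12 g_v+O(1)$ pointwise on $X_v^{\rm an}$, with the constant independent of $n$. These conditions cut out a compact subset of the finite-dimensional space $V$, by an arithmetic finiteness statement for $\RR$-coefficient rational sections of bounded height in a finite-dimensional subspace of $\Rat(X)^{\times}_{\RR}$. Extract a subsequence $\psi_{n_k}\to\psi^*\in V$; passing to the limit in $\overline{D}_{n_k}\geq 0$ (using $\epsilon_{n_k}\to 0$ uniformly and the divisor parts converging in a fixed finite-dimensional subspace) yields $\overline{D}+\widehat{(\psi^*)}\geq 0$. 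Hence $\overline{D}$ is $\RR$-linearly equivalent to the effective adelic $\RR$-Cartier divisor $\overline{D}+\widehat{(\psi^*)}$, proving the Dirichlet property. The ``in particular'' case $f^*(D)=dD$ corresponds to $\varphi=0$: take $V=\{0\}$, so $\psi_n\equiv 0$ and $\overline{D}=\lim_n\overline{D}_n$ is itself effective.

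The principal difficulty is the compactness step. The divisor inequality alone does not confine $\psi_n$, because the kernel of $\Rat(X)^{\times}_{\RR}\to\aDiv_{\RR}(X)$ contains $K^{\times}_{\RR}$, so the cone $\{\psi\in V:(\psi)\geq -D\}$ may well be unbounded. The Green-function inequality, combined with an arithmetic finiteness result (essentially the $\RR$-coefficient analogue of the Northcott-type finiteness of $\aH(X,\overline{D})$ for a bounded adelic divisor), is what secures compactness of $\{\psi_n\}$ in $V$ and thus closes the argument.
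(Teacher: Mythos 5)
Your proposal follows essentially the same route as the paper's proof (Theorem~\ref{thm:Dirichlet:prop} together with the preparatory lemmas of Section~3): choose $g_{0}\geqslant 0$, express the iterated pull-backs as $\overline{D}_{n}=\overline{D}+\widehat{(\psi_{n})}+\epsilon_{n}$ with $\psi_{n}\in V$ and $\epsilon_{n}\to 0$ uniformly, extract a convergent subsequence of $\{\psi_{n}\}$ from compactness in the finite-dimensional space $V$, and pass to the limit via Lemma~\ref{lemma:limit:effective}. The compactness you correctly single out as the principal difficulty is exactly the paper's Lemma~\ref{lemma:convex:compact}, which is supplied by Moriwaki's result \cite[Corollary~3.3.2]{MoD} together with the injectivity of $\Rat(X)^{\times}_{\RR}\to\aDiv_{\RR}(X)$ (Lemma~\ref{lemma:injective:principal}). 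One small correction: it is the divisor-level map $\Rat(X)^{\times}_{\RR}\to\Div_{\RR}(X)$ whose kernel contains $K^{\times}_{\RR}$; the adelic map $\Rat(X)^{\times}_{\RR}\to\aDiv_{\RR}(X)$ is injective by Lemma~\ref{lemma:injective:principal}, and this injectivity is precisely what makes the compactness of $V\cap\hat{\Gamma}(X,\overline{D}')^{\times}_{\RR}$ hold.
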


We can for instance apply the above theorem to the case where $X = \PP^n_K = \Proj(K[T_0, T_1, \ldots, T_n])$,
$D = \{ T_0 = 0\}$ and $f$ is a polynomial map, that is,
\[
f\left(\PP^n_K \setminus \{ T_0 = 0 \}\right) \subseteq \PP^n_K \setminus \{ T_0 = 0 \}.
\]
If $f$ is not an automorphism, then $f^*(D) = dD$ for some $d \in \ZZ_{\geqslant 2}$.
Therefore, the above theorem implies the effectivity of $\overline{D}$.
For example, if $f$ is given by $f(T_0:T_1) = (T_0^2 : T_1^2 + cT_0^2)$ ($c \in K$) on $\PP^1_K$,
then $\overline{D}$ is effective.
Even if the Julia set $J(f_{v})$ of $f_{v}$ ($v \in M_K$) is complicated,
$|1|_{g_{v}} = 1$ on $J(f_{v})$
by \cite[Lemma~2.1 and Remark~2.3]{DsysDirichlet}. More concrete examples of adelic $\mathbb R$-Cartier divisors verifying the sufficient condition will be discussed in 
Example~\ref{Example:Dirichlet}.

We then consider the Dirichlet property in the setting of arithmetic varieties over a trivially valued field. We consider an integral projective scheme $X$ over a field $K$. We equip $K$ with the trivial absolute value $|\ndot|$ (namely $|a|=1$ for any $a\in K^{\times}$). Denote by $X^{\mathrm{an}}$ the Berkovich  space associated with $X$. If $D$ is an $\mathbb R$-Cartier divisor on $X$, by \emph{$D$-Green function of $C^0$-type}, or \emph{Green function} of $D$, we refer to a continuous function on the complementary of the analytification of the support of $D$, which is locally of the form $\varphi-\log|f|$, where $\varphi$ is a continuous function on $X^{\mathrm{an}}$ and $f$ is an element of $\mathrm{Rat}(X)^{\times}_{\mathbb R}$ which defines $D$ locally. The pair $\overline D=(D,g)$ is called an \emph{adelic $\mathbb R$-Cartier divisor on $X$}. The analogue of the arithmetic volume function can be defined in this setting, and the bigness and  pseudo-effectivity of adelic $\mathbb R$-Cartier divisors are defined in a similar way as in the classic arithmetic framework. It is then a natural question to determine sufficient conditions for a pseudo-effective adelic $\mathbb R$-Cartier divisor to be $\mathbb R$-linearly equivalent to an effective one. Even for this simple setting where only the trivial valuation is considered in the adelic structure, this problem  still seems to be very subtle. However, in the case where $X$ is a regular curve over $\Spec K$ such that
$\dim_{\QQ} (\Pic(X) \otimes \QQ) = 1$, 
we have a complete answer to this problem. In fact, when $X$ is a regular curve over $\Spec K$, the Berkovich  space $X^{\mathrm{an}}$ can be illustrated by an infinite tree of depth $1$
\vspace{3mm}
\begin{center}
\begin{tikzpicture}
\filldraw(0,1) circle (2pt) node[align=center, above]{$\eta_0$};
\filldraw(-3,0) circle (2pt) ;
\draw (-1,0) node{$\cdots$};
\filldraw(-2,0) circle (2pt) ;
\filldraw(-0,0) circle (2pt) node[align=center, below]{$x$} ;
\filldraw(1,0) circle (2pt) ;
\draw (2,0) node{$\cdots$};
\filldraw(3,0) circle (2pt) ;
\draw (0,1) -- (0,0);
\draw (0,1) -- (-3,0);
\draw (0,1) -- (1,0);
\draw (0,1) -- (-2,0);
\draw (0,1) -- (3,0);
\end{tikzpicture}
\end{center}
\vspace{3mm}
where the root vertex $\eta_0$ corresponds to the generic point and the trivial absolute value on the field $\mathrm{Rat}(X)$, and the leaves are parametrised by the set $X^{(1)}$ of closed point in $X$ (together with the trivial absolute value on the corresponding residue field). We denote by $i:X\rightarrow X^{\mathrm{an}}$ the map sending the generic point $\eta$ of $X$ to $\eta_0$ and each closed point $x$ to the corresponding leaf in the tree. Each branch $[\eta_0,x]$ with $x\in X^{(1)}$ is parametrised by $t:[\eta_0,x]\rightarrow[0,+\infty]$. Any $\xi\in 
[\eta_0,x[$ corresponds to the generic point of $X$ and the field $\mathrm{Rat}(X)$ equipped with the absolute value $|\ndot|_\xi$ such that \[|\ndot|_\xi=\mathrm{e}^{-t(\xi)\mathrm{ord}_x(\ndot)}\;\text{on $\mathrm{Rat}(X)^{\times}$}\]
where $\mathrm{ord}_x(\ndot)$ is the discrete valuation on $\mathrm{Rat}(X)$ with valuation ring $\mathcal O_{X,x}$. Moreover, $t(x)=+\infty$. The space $X^{\mathrm{an}}$ is equipped with the Berkovich topology, whose restriction on each branch $[\eta_0,x]$ corresponding to the usual topology on $[0,+\infty]$ via the parametrisation $t(\ndot)$, and any open neighbourhood of $\eta_0$ contains all but finitely many branches.

Given a continuous function $g$ on $X^{\mathrm{an}}\setminus i(X^{(1)})$, we define a family of invariants
\[\forall\,x\in X^{(1)},\quad \mu_x(g):=\inf_{\xi\in\,]\eta_0,x[}\frac{g(\xi)}{t(\xi)}\in \mathbb R\cup\{-\infty\}.\]
We establish the following result (see Theorems \ref{Pro:Dirichlet}, \ref{Pro: Dirichlet property and mu tot}, and Remark \ref{Rem: obstruction}).
\begin{Theorem}
Let $X$ be a regular projective curve over $\Spec K$ such that 
$\dim_{\QQ} (\Pic(X) \otimes \QQ) = 1$.
Let $\overline D=(D,g)$ be an adelic $\mathbb R$-Cartier divisor on $X$ such that $D$ is big. Then for all but a finite number of $x\in X^{(1)}$, one has $\mu_x(g)\leqslant 0$. Moreover, with the notation
\[ \mu_{\mathrm{tot}}(g):=\sum_{x\in X^{(1)}}\mu_x(g)[\kappa(x):K],
\]
where $\kappa(x)$ denotes the residue field of $x$, the following statements hold.
\begin{enumerate}[(1)]
\item The adelic $\mathbb R$-Cartier divisor $\overline D$ is pseudo-effective if and only if $\mu_{\mathrm{tot}}(g)\geqslant 0$.
\item The adelic $\mathbb R$-Cartier divisor $\overline D$ satisfies the Dirichlet property if and only if $\mu_x(g)\geqslant 0$ for all but a finite number of $x\in X^{(1)}$, and $\mu_{\mathrm{tot}}(g)\geqslant 0$.
\end{enumerate}
\end{Theorem}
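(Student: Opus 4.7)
The plan is to read off everything from a branch-by-branch analysis of $g$, and then to use $\dim_{\QQ}(\Pic(X)\otimes\QQ)=1$ to convert the whole question into a single degree inequality on $\RR$-divisors.

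First, a local analysis at each leaf. Since $g$ is locally of the form $\varphi-\log|f|$ with $f$ a local equation of $D$ at $x$ and $|f|_{\xi}=\mathrm{e}^{-t(\xi)\ord_x(D)}$ on the branch, one has $g(\xi)/t(\xi)\to\ord_x(D)$ as $\xi\to x$, whence
\[
\mu_x(g)\;\leqslant\;\ord_x(D)\quad\text{for every }x\in X^{(1)}.
\]
Since $\Supp(D)$ is finite, this is already the first assertion of the theorem, and in addition shows $\{x:\mu_x(g)>0\}\subseteq\Supp(D)$ is finite. Next I would record the behaviour under $\RR$-linear equivalence: replacing $(D,g)$ by $(D+(f_0),g-\log|f_0|)$ for $f_0\in\Rat(X)^{\times}_{\RR}$ shifts each $\mu_x$ by $\ord_x(f_0)$ and preserves $\mu_{\mathrm{tot}}$ because $\deg((f_0))=0$. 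The rank-one Picard hypothesis forces $\Pic^0(X)\otimes\RR=0$, so the degree map identifies $\Pic(X)\otimes\RR$ with $\RR$ and any two $\RR$-divisors of equal degree are $\RR$-linearly equivalent; as $\deg D>0$ (bigness), we may therefore assume $D$ effective.

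For the Dirichlet characterisation the forward direction is direct: if $\overline D+\widehat{(f)}$ is effective, then $g(\xi)+t(\xi)\ord_x(f)\geqslant 0$ on each branch gives $\mu_x(g)\geqslant-\ord_x(f)$, which equals $0$ outside the finite set $\Supp((f))$, and summing with the weights $[\kappa(x):K]$ yields $\mu_{\mathrm{tot}}(g)\geqslant-\deg((f))=0$. For the converse, having reduced to $D$ effective, the local bound gives $\max(-\ord_x(D),-\mu_x(g))=-\mu_x(g)$. Set $C:=\sum_{x\in X^{(1)}}(-\mu_x(g))[x]$, which has finite support (both $\{\mu_x>0\}$ and $\{\mu_x<0\}$ are finite) and degree $-\mu_{\mathrm{tot}}(g)\leqslant 0$. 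Rank-one Picard realises $-C$ as $\RR$-linearly equivalent to an effective $\RR$-divisor, producing $f\in\Rat(X)^{\times}_{\RR}$ with $\ord_x(f)\geqslant-\mu_x(g)$ for every $x$. Then $D+(f)\geqslant 0$ (since $-\mu_x(g)\geqslant-\ord_x(D)$), and on each branch $g(\xi)+t(\xi)\ord_x(f)\geqslant(\mu_x(g)+\ord_x(f))t(\xi)\geqslant 0$; continuity of $g$ at $\eta_0$ combined with $g(\xi)\geqslant 0$ on any branch with $\mu_x(g)\geqslant 0$ gives $g(\eta_0)\geqslant 0$, so $\overline D+\widehat{(f)}$ is effective.

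The pseudo-effectivity equivalence is where I expect the main difficulty. The implication $(\Rightarrow)$ I would prove by establishing (or invoking from earlier in the paper) a numerical criterion identifying $\mu_{\mathrm{tot}}(g)$ with a positive multiple of the arithmetic degree of $\overline D$ on this curve, so that pseudo-effectivity forces $\mu_{\mathrm{tot}}(g)\geqslant 0$ by testing against big classes in the limit. For $(\Leftarrow)$, if $\{\mu_x(g)<0\}$ happens to be finite one concludes immediately from the Dirichlet part just proved; in general I would approximate by perturbing $\overline D$ with a small auxiliary ample adelic divisor $\varepsilon\overline A$ whose $\mu$ invariants lift all but finitely many negative $\mu_x(g)$'s while preserving $\mu_{\mathrm{tot}}\geqslant 0$, apply Dirichlet to $\overline D+\varepsilon\overline A$ to get bigness, and let $\varepsilon\to 0$. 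The hard part will be this last step: the set $\{\mu_x(g)<0\}$ can a priori be infinite and no single $\RR$-linear equivalence using $f_0\in\Rat(X)^{\times}_{\RR}$ can repair infinitely many bad branches at once, so a genuine limiting argument---or a direct numerical-degree computation valid without Dirichlet---appears indispensable.
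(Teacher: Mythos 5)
Your treatment of part (2) and of the first assertion is essentially sound and close in spirit to the paper's Theorem~\ref{Pro:Dirichlet}: the branch-by-branch bound $\mu_x(g)\leqslant\ord_x(D)$, the invariance of $\mu_{\mathrm{tot}}$ under $\RR$-principal shifts, and the use of $\Pic$ of rank one to build a rational function from the finitely many nonzero $\mu_x$'s. (The paper chooses coefficients $a_i\leqslant\mu_{x_i}(g)$ with $\sum a_i[\kappa(x_i):K]=0$ and sets $D'=\sum a_i x_i$, whereas you take $C=\sum(-\mu_x(g))\,x$ of degree $\leqslant 0$ directly; these are cosmetic variants of the same construction.)

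However, there is a genuine gap in part (1), and it is exactly where you flag the difficulty. First, the $(\Rightarrow)$ route you sketch--identifying $\mu_{\mathrm{tot}}(g)$ with a positive multiple of the arithmetic degree of $\overline D$--is not available: these invariants do not agree (already on $\PP^1$, as Example~\ref{exam:projective:line:Dirichlet} shows, one can manufacture $g'$ with $\mu_{\mathrm{tot}}(g')=0$ by subtracting small triangular bumps on infinitely many branches without altering the self-intersection in any such controlled way). Second, for $(\Leftarrow)$ your proposed perturbation by $\varepsilon\overline A$ with $\overline A$ ample does not repair infinitely many bad branches: for a closed point $x\notin\Supp(A)$ the Green function $g_A$ of an ample $A$ contributes only a bounded function on $[\eta_0,x]$, so $\mu_x(g_A)=0$ and $\mu_x(g+\varepsilon g_A)=\mu_x(g)$ remains negative. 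The device the paper uses (Proposition~\ref{Pro: pseudo-effectivity} and Theorem~\ref{Pro: Dirichlet property and mu tot}) is to add the \emph{constant} $\varepsilon$ to $g$, i.e.\ to add $(0,\varepsilon)$: since any Berkovich-open neighbourhood of $\eta_0$ contains all but finitely many whole branches $[\eta_0,x]$, the open set $\{g>-\varepsilon\}$ already covers all but finitely many entire branches, giving $\mu_x(g+\varepsilon)\geqslant 0$ there; combined with $\mu_{\mathrm{tot}}(g+\varepsilon)\geqslant\mu_{\mathrm{tot}}(g)\geqslant 0$ one can apply part~(2) to $(D,g+\varepsilon)$. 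The missing ingredient in the other direction is the chain from Corollary~\ref{Cor: pseudo-effectivity} and Proposition~\ref{Pro: bigness and lambda}: pseudo-effectivity of $(D,g)$ with $D$ big is equivalent to $\lambda^{\mathrm{asy}}_{\max}(D,g)\geqslant 0$, hence to bigness (and so the Dirichlet property) of $(D,g+\varepsilon)$ for every $\varepsilon>0$, and then $\mu_{\mathrm{tot}}(g)\geqslant 0$ follows by letting $\varepsilon\to 0$ via monotone convergence of $\mu_x(g+\varepsilon)\downarrow\mu_x(g)$. Without this reduction through the $\lambda^{\mathrm{asy}}_{\max}$ criterion and the constant perturbation, your argument for part~(1) does not close.
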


Let $f : \PP^1_K \to \PP^1_K$ be an endomorphism and $D$ be an $\RR$-Cartier divisor on $\PP^1_K$
such that $\deg(D) \geqslant 0$ and
$f^*(D) = dD + (\varphi)$ for some $d \in \RR_{>1}$ and $\varphi \in \Rat(X)^{\times}_{\RR}$.
Let $g$ be a unique $D$-Green function of $C^0$-type with $f^*(g) = dg - \log |\varphi|$.
As a corollary of the above theorem, we can conclude that 
$\overline{D} = (D, g)$ satisfies the Dirichlet
property (cf. Corollary~\ref{cor:Dirichlet:endomorphism:curve},
Proposition~\ref{prop:semiample:plurisubharmonic:endomorphism}, and Corollary~\ref{cor:plurisubharmonic:pseudoeffective:Dirichlet}). 
This is an essentially different point from the classic setting.

Surprisingly, we observe again a finiteness condition in the comparison of the pseudo-effectivity and the Dirichlet property. These results suggest that the functional obstructions to the Dirichlet property introduced in \cite{DsysDirichlet} may not be the only obstruction.  

The rest of the article is organised as follows. We first introduce the notation and conventions that will be used throughout the article.
In the second section we recall some basic constructions on Berkovich spaces such as Green functions of ($\mathbb R$-)Cartier divisors and prove preliminary  results which are useful for the proof of the main theorems. In the third section, 
we prove Theorem \ref{Thm:dynamic example} and provide several concrete applications of the theorem. 
In the fourth section, 
we introduce the framework of Arakelov geometry over a trivially valued field and discuss several positivity conditions such as bigness and pseudo-effectivity in this framework. 
Finally in the fifth section 
we discuss the Dirichlet property in the setting of Arakelov geometry over a trivially valued field.

\renewcommand{\thesubsubsection}{\arabic{subsubsection}}

\subsection*{Conventions and terminology}
Throughout this subsection,
let $\KK$ be either $\QQ$ or $\RR$.

\subsubsection{}
\label{CT:tensor:R}
Let $(G,\cdot)$ be a multiplicative abelian group. 
The tensor product $G \otimes_{\ZZ} \KK$ is denoted by $G_{\KK}$.
For $\phi_1, \ldots, \phi_r \in G_{\KK}$ and
$A = (a_1, \ldots, a_r) \in \KK^r$, we set $\phi^A := \phi_1^{a_1} \cdots \phi_r^{a_r}$
in $G_{\KK}$
for sake of simplicity.

\subsubsection{}
\label{CT:R:Cartier:div}

Let $X$ be a Noetherian integral scheme and $\mathscr M_X$ be the sheaf of
rational functions on $X$.
We define $\Div(X)$ and $\Div_{\KK}(X)$ to be
\[
\Div(X) := H^0(X, \mathscr M_X^{\times}/\OO_X^{\times})\ \text{and}\ 
\Div_{\KK}(X) := H^0(X, \mathscr M_X^{\times}/\OO_X^{\times}) \otimes \KK,
\]
whose elements are called {\em Cartier divisors} and {\em $\KK$-Cartier divisors} on $X$, respectively.
Let $\Rat(X)$ be the field consisting of all rational functions on $X$
and $\PDiv(X)$ be the subgroup of $\Div(X)$ consisting principal divisors on $X$, that is, $\PDiv(X) := \{ (\phi) \mid \phi \in \Rat(X)^{\times}\}$.
We call any element of $\Rat(X)^{\times}_{\KK}$ a \emph{${\KK}$-rational function} on $X$.
Note that the natural homomorphism $\Div(X) \to \Div_{\KK}(X)$
is not necessarily injective (see Remark~\ref{DRem:R:Cartier:vs:Cartier}).
A $\KK$-Cartier divisor $D$ on $X$ is locally given by $f \in \Rat(X)^{\times}_{\KK}$, which is called
a \emph{local equation} of $D$.
For a $\KK$-rational function $\varphi$ on $X$,
we can define a $\KK$-Cartier divisor $(\varphi)$ on $X$ by
considering the local equation $\varphi$ everywhere.
The $\KK$-Cartier divisor $(\varphi)$ is called a {\em $\KK$-principal divisor} of $\varphi$.
We denote the vector subspace of $\Div_{\KK}(X)$ consisting of $\KK$-principal divisors on $X$ by
$\PDiv_{\KK}(X)$. 
Note that $\PDiv_{\KK}(X) = \PDiv(X) \otimes_{\mathbb Z} \KK$.
Moreover, $\Pic(X) = \Div(X)/\PDiv(X)$ and $\Pic_{\KK}(X) := \Pic(X) \otimes_{\mathbb Z} \KK \cong \Div_{\KK}(X)/\PDiv_{\KK}(X)$.

A Cartier divisor is said to be {\em effective} if every local equation can be taken as a regular function.
Furthermore, a $\KK$-divisor $D$ on $X$ is said to be {\em $\KK$-effective}, denoted by $D \geqslant_{\KK} 0$,
if there are effective Cartier divisors $D_1, \ldots, D_r$ on $X$ and
$a_1, \ldots, a_r \in \KK_{>0}$ such that $ D = a_1D_1 + \cdots + a_r D_r$.
For a $\QQ$-Cartier divisor $D$, by applying (2) in Claim~\ref{claim:prop:Q:PSH:equiv:R:PSH:01} to
the case where $V = \Rat(X)^{\times}_{\QQ}$, $x = \text{a local equation of $D$}$ and
$x_i = \text{a local equation of $D_i$}$ ($i=1, \ldots, r$), we can see that
$D \geqslant_{\QQ} 0$ if and only if $D \geqslant_{\RR} 0$. However, 
a Cartier divisor which is effective as a $\KK$-Cartier divisor is not
necessarily effective as a Cartier divisor (see Remark~\ref{DRem:R:Cartier:vs:Cartier}).
For simplicity, a $\KK$-effective $\KK$-Cartier divisor is often said to be
effective and the notation $D \geqslant_{\KK} 0$ is denoted by $D \geqslant 0$.

Let $D$ be a $\KK $-Cartier divisor on $X$ and, for each $x \in X$, let $f_x$ ($\in \Rat(X)^{\times}_{\KK }$) be a local equation of $D$ at $x$.
We define $\Supp_{\KK }(D)$ to be
\[
\Supp_{\KK }(D) := \{ x \in X \mid f_x \not\in (\OO_{X,x}^{\times})_{\KK } \}.
\]
Note that the above definition does not depend on the choice of $f_x$
because if $f'_x$ is another local equation of $D$ at $x$, then $f_x/f'_x \in (\OO_{X, x}^{\times})_{\KK }$.
Moreover, $\Supp_{\KK }(D)$ is closed by \cite[Proposition~1.2.1]{Moriwaki16}.
In addition, for a $\QQ$-Cartier divisor $D$, $\Supp_{\QQ}(D) = \Supp_{\RR}(D)$ by 
(1) in Claim~\ref{claim:prop:Q:PSH:equiv:R:PSH:01}. If $D$ is a Cartier divisor, then we can take $f_x$ belonging to $\Rat(X)^{\times}$, so that
we can define another $\Supp_{\ZZ}(D)$ to be
\[
\Supp_{\ZZ}(D) := \{ x \in X \mid f_x \not\in \OO_{X,x}^{\times} \}.
\]
Obviously $\Supp_{\KK }(D) \subseteq \Supp_{\ZZ}(D)$, but $\Supp_{\KK }(D) \not= \Supp_{\ZZ}(D)$ in general
(for details, see \cite[Subsection~1.2]{Moriwaki16} or Remark~\ref{DRem:R:Cartier:vs:Cartier}).
For sake of simplicity, we often denote $\Supp_{\KK }(D)$ by $\Supp(D)$.
Furthermore, for a $\KK $-Cartier divisor $D$ on $X$, we define $H^0(X, D)$ to be
\[
H^0(X, D) := \{ \phi \in \Rat(X)^{\times} \mid D + (\phi) \geqslant_{\KK } 0 \} \cup \{ 0 \}.
\]
We assume that $D$ is a Cartier divisor on $X$.
Let $\OO_X(D)$ be an invertible sheaf associated with $D$.
Then we have a canonical injective homomorphism 
\[
H^0(X, \OO_X(D)) \to H^0(X, D).\]
Note that it is not necessarily surjective (for details, see Remark~\ref{DRem:R:Cartier:vs:Cartier}).

\subsubsection{}
\label{CT:semiample}
Let $X$ be an integral projective scheme over a field $K$ and
$D$ be either a Cartier divisor or a $\QQ$-Cartier divisor or an $\RR$-Cartier divisor on $X$.
We say $D$ is {\em semiample} if one of the following conditions is satisfied according to the class of $D$:

\medskip
$\bullet$ {\bf Cartier divisor}:
there is a positive integer $n$ such that
$\OO_X(nD)$ is generated by global sections.

$\bullet$ {\bf $\QQ$-Cartier divisor}:
there is a positive integer $m$ such that
$mD$ can be represented by a semiample Cartier divisor, that is,
there is a semiample Cartier divisor $A$ on $X$ such that
$mD$ is the image of $A$ via the natural homomorphism
$\Div(X) \to \Div_{\QQ}(X)$.

$\bullet$ {\bf $\RR$-Cartier divisor}:
there are semiample Cartier divisors
$A_1, \ldots, A_r$ and non-negative real numbers $a_1, \ldots, a_r$
such that $D = a_1 A_1 + \cdots + a_r A_r$.

\medskip\noindent
Note that every $\RR$-principal divisor is semiample because
every principal divisor is semiample.

\subsubsection{}
\label{CT:number:field:places}
Let $K$ be a number field, that is, $K$ is a finite extension field over $\QQ$.
Let $O_K$ be the ring of integers in $K$.
We set  $M_K^{\mathrm{fin}} := \Spec(O_K) \setminus \{ (0) \}$, 
which is referred as the set of finite places of $K$.
Moreover, the set of all embeddings $K \hookrightarrow \CC$ is denoted by $K(\CC)$.
By abuse of notation, $K(\CC)$ is referred to as the set of infinite places of $K$ and
it is often denoted by $M_K^{\infty}$. 
We set $M_K := M_K^{\mathrm{fin}} \cup M_K^{\infty}$. 
Note that $M_K$ is slightly different from the notation in \cite{DsysDirichlet}.
Let $X$ be a normal, projective and geometrically integral scheme over $\Spec K$.
For each $v \in M_K$, $K_v$ $X_v$ and $X_v^{\mathrm{an}}$ are defined as follows (see also \S\ref{Reminder Berkovich}):
{\allowdisplaybreaks
\begin{align*}
& \text{\bf $\bullet$ Case $v = \mathfrak{p} \in M_K^{\mathrm{fin}}$} : \\
& \hskip3em \begin{cases}
K_v := \text{the completion of $K$ at $v$}, \\
X_v := X \times_{\Spec(K)} \Spec(K_v), \\
X_v^{\mathrm{an}} := \text{the analytification of $X_v$ in the sense of Berkovich}.
\end{cases}\\[1ex]
& \text{\bf $\bullet$ Case $v = \sigma \in M_K^{\infty}$} : \\
& \hskip3em \begin{cases}
K_v := K \otimes_{K}^{\sigma} \CC \ \text{with respect to $v : K \hookrightarrow \CC$}, \\
X_v := X \times^{\sigma} _{\Spec(K)} \Spec(\CC) \ \text{with respect to $v : K \hookrightarrow \CC$}, \\
X_v^{\mathrm{an}} := X_v(\CC).
\end{cases}
\end{align*}}

\subsubsection{}
\label{CT:adelic:R:Cartier:div}
Let $K$ be a number field and $X$ be a normal, projective and geometrically integral scheme over $\Spec K$.
A pair $\overline{D} = (D, g)$ of an $\RR$-Cartier divisor $D$ on $X$ and a collection
\[
g = \left\{ g_\pp \right\}_{\pp \in M_K} 
\cup \left\{ g_{\sigma} \right\}_{\sigma \in M^{\infty}_K}
\]
of $D$-Green functions of $C^0$-type
is called \emph{an adelic arithmetic $\RR$-Cartier divisor of $C^0$-type on $X$}
if the following conditions are satisfied:
\begin{enumerate}
\item
For each $\pp \in M_K^{\mathrm{fin}}$,
$g_\pp$ is a $D$-Green function of $C^0$-type on $X^{\mathrm{an}}_\pp$.
In addition, there are a non-empty open set $U$ of $\Spec(O_K)$,
a model $\XXX_U$ of $X$ over $U$ and an $\RR$-Cartier divisor $\DDD_U$ on $\XXX_U$
such that $\DDD_U \cap X = D$ and $g_\pp$ is a $D$-Green function induced by the model
$(\XXX_U, \DDD_U)$ for all $\pp \in U \cap M_K$. 

\item
For each $\sigma \in M_K^{\infty}$, $g_{\sigma}$ is a $D$-Green function of $C^0$-type on $X^{\mathrm{an}}_{\sigma}$.
Moreover, the function $\left\{ g_{\sigma} \right\}_{\sigma \in M^{\infty}_K}$ is an $F_{\infty}$-invariant,
that is, for all $\sigma \in M_K^{\infty}$, $g_{\overline{\sigma}} \circ F_{\infty} = g_{\sigma}$,
where $F_{\infty} : X^{\mathrm{an}}_{\sigma} \to X^{\mathrm{an}}_{\overline{\sigma}}$ is an anti-holomorphic map induced by
the complex conjugation.
\end{enumerate}
The space of all adelic arithmetic
$\RR$-Cartier divisors of $C^0$-type on $X$
is denoted by 
$\aDiv_{\RR}(X)$.
For an adelic arithmetic $\RR$-Cartier divisor $\overline{D}$ of $C^0$-type on $X$,
we define $\hat{\Gamma}(X, \overline{D})^{\times}_{\RR}$ to be
\[
\hat{\Gamma}(X, \overline{D})^{\times}_{\RR} := \{ s \in \Rat(X)^{\times}_{\RR} \mid
\overline{D} + \widehat{(s)} \geqslant 0 \}.
\]
We say $\overline{D}$ \emph{satisfies the Dirichlet property} if 
$\hat{\Gamma}(X, \overline{D})^{\times}_{\RR} \not= \emptyset$.

\section{Green functions on Berkovich analytic spaces}
Let $K$ be a field and $|\ndot|$ be a complete absolute value of $K$.
The absolute value $|\ndot|$ might be trivial (so that $|a|=1$ for any $a\in K\setminus\{0\}$).
Let $X$ be an integral projective scheme over $\Spec K$
and $X^{\mathrm{an}}$ be the analytification of $X$ in the sense of Berkovich.
In this section, we consider a Green function on $X^{\mathrm{an}}$ associated with an $\RR$-Cartier divisor. 

\subsection{Reminder on Berkovich spaces}\label{Reminder Berkovich}

Let $X$ be a scheme over $\Spec K$. As a set, $X$ identifies with the colimit of the functor $F_X$, from the category $\mathbf{E}_K$ of fields extensions of $K$ and $K$-linear field homomorphisms, to the category $\mathbf{Set}$ of sets, which sends any extension $K'/K$ to the set of $K$-morphisms from $\Spec K'$ to $X$. The Berkovich space (see \cite{Berkovich90}) $X^{\mathrm{an}}$ associated with $X$ can also be defined in a similar way. We denote by $\mathbf{VE}_K$ the category of valued extensions of $K$ and $K$-linear homomorphisms preserving absolute values. More precisely, any objet of $\mathbf{VE}_K$ is of the form $(K',|\ndot|')$, where $K'$ is an extension of $K$ and $|\ndot|'$ is an absolute value on $K'$ extending $|\ndot|$. We let $\varpi:\mathbf{VE}_{K}\rightarrow\mathbf{E}_K$ be the forgetful functor sending $(K',|\ndot|')$ to $K'$. As a set the Berkovich space $X^{\mathrm{an}}$ is then defined as the colimit of the composed functor $F_X\circ\varpi$. By the universal property of colimit one has a natural map $j:X^{\mathrm{an}}\rightarrow X$, called the \emph{specification map}. This construction is functorial: for any morphism of $K$-schemes $\varphi:X\rightarrow Y$, the universal property of colimit determines a map $\varphi^{\mathrm{an}}:X^{\mathrm{an}}\rightarrow Y^{\mathrm{an}}$.

Let $\xi$ be a point of $X^{\mathrm{an}}$ and $\kappa(\xi)$ be the residue field of $j(\xi)\in X$, called the \emph{residue field of $\xi$}. If $y:\Spec K'\rightarrow X$ is a $K$-morphism, where $(K',|\ndot|_y)$ is a valued extension of $(K,|\ndot|)$, which represents the point $\xi\in X^{\mathrm{an}}$, then the morphism factorises through the canonical $K$-morphism $\Spec\kappa(\xi)\rightarrow X$ and the restriction of $|\ndot|_y$ on $\kappa(\xi)$ does not depend on the representative $y$ of the class $\xi$. We denote by $|\ndot|_{\xi}$ this absolute value. We emphasis that two different points $\xi$ and $\xi'$ of $X^{\mathrm{an}}$ may have the same residue field. However, in this case $|\ndot|_{\xi}$ and $|\ndot|_{\xi'}$ are different.  

On the Berkovich space $X^{\mathrm{an}}$ there is a natural topology which is the most coarse topology making the specification map $j:X^{\mathrm{an}}\rightarrow X$ continuous, where we consider the Zariski topology on $X$. This topology is called the \emph{Zariski topology} on $X^{\mathrm{an}}$. Berkovich has introduced a finer topology as follows, called \emph{Berkovich topology} nowadays. Let $f$ be a regular function on a Zariski open subset $U$ of $X$. Recall that $f$ corresponds to a morphism from $U$ to $\mathbb A_K^1$. Therefore, for any $\xi\in U^{\mathrm{an}}$, the morphism $f$ determines an element $f(\xi)$ in $\kappa(\xi)$. We denote by $|f|(\xi)$ the absolue value $|f(\xi)|_\xi$. The \emph{Berkovich topology} on $X^{\mathrm{an}}$ is defined as the most coarse topology on $X^{\mathrm{an}}$ which makes the specification map $j$ and all functions of the form $|f|$ continuous, where $f$ runs over all regular functions on Zariski open subsets of $X$ (see \cite[\S3.4]{Berkovich90} for more details). If $f:X\rightarrow Y$ is a morphism of $K$-schemes, then the map $f^{\mathrm{an}}:X^{\mathrm{an}}\rightarrow Y^{\mathrm{an}}$ is continuous with respect to the Berkovich topology.

\subsection{Green function}
Let $X$ be an integral projective scheme over $\Spec K$. We denote by $\widehat{C}^0(X^{\mathrm{an}})$ the set of continuous functions on a non-empty Zariski open subset of $X^{\mathrm{an}}$, modulo the following equivalence relation
\[f\sim g \;\Longleftrightarrow\;\text{$f$ and $g$ coincide on a non-empty Zariski open subset}.\]
Note that the addition and the multiplication of functions induce a structure of $\mathbb R$-algebra on $\widehat{C}^0(X^{\mathrm{an}})$. Moreover, for any non-empty Zariski open subset $U$ of $X$, we have a natural homomorphism of $\mathbb R$-algebras from $C^0(U^{\mathrm{an}})$ to $\widehat{C}^0(X^{\mathrm{an}})$. Since $U^{\mathrm{an}}$ is dense in $X^{\mathrm{an}}$ (see \cite[Corollary 3.4.5]{Berkovich90}), this homomorphism is injective. Therefore, by abuse of notation we may consider any function in $C^0(U^{\mathrm{an}})$ as an element in $\widehat{C}^0(X^{\mathrm{an}})$. We say that an element of $\widehat{C}^0(X^{\mathrm{an}})$ \emph{extends to a continuous function on $U^{\mathrm{an}}$} if it belongs to the image of the canonical homomorphism $C^0(U^{\mathrm{an}})\rightarrow\widehat{C}^0(X^{\mathrm{an}})$.

\begin{Example}
Let $X$ be an integral projective scheme over $\Spec K$. If $f$ is a non-zero rational function on $X$, then it coincides with an invertible regular function on some non-empty Zariski open subset $U$ of $X$. Therefore $\log|f|$ determines an element of $\widehat{C}^0(X^{\mathrm{an}})$, which does not depend on the choice of $U$. The map from $\mathrm{Rat}(X)^{\times}$ to $\widehat{C}^0(X^{\mathrm{an}})$ sending $f\in \mathrm{Rat}(X)^{\times}$ to $\log|f|$ is a group homomorphism, and hence induces an $\mathbb R$-linear map $\mathrm{Rat}(X)^{\times}_{\mathbb R}\rightarrow\widehat{C}^0(X)$ which we still denote by $\log|\ndot|$.
\end{Example}

\begin{Definition}\label{Def:Green functions}
Let $D$ be a Cartier divisor on $X$. We call \emph{$D$-Green function of $C^0$-type} (or simply \emph{Green function of $D$})
any element $g\in\widehat{C}^0(X^{\mathrm{an}})$ such that, for any element $f\in\mathrm{Rat}(X)^{\times}$ which defines the Cartier divisor $D$ locally on a non-empty Zariski open subset $U$ of $X$, the element $g+\log|f|$ of $\widehat{C}^0(X^{\mathrm{an}})$ extends to a continuous function on $U^{\mathrm{an}}$.

Similarly if $\mathbb K=\mathbb Q$ or $\mathbb R$ and if $D$ is a $\mathbb K$-Cartier divisor on $X$,
we call 
\emph{$D$-Green function of $C^0$-type} or \emph{Green function of $D$}
any element $g\in\widehat{C}^0(X^{\mathrm{an}})$ such that, for any element $f\in\mathrm{Rat}(X)^{\times}_{{\mathbb K}}$ which defines the ${\mathbb K}$-Cartier divisor $D$ locally on a non-empty Zariski open subset $U$ of $X$, the element $g+\log|f|$ of $\widehat{C}^0(X^{\mathrm{an}})$ extends to a continuous function on $U^{\mathrm{an}}$. 

For ${\mathbb K}$-Cartier divisors $D$ and $D'$ and $a, a' \in {\mathbb K}$, 
it is easy to see that if $g$ and $g'$ are Green functions of $D$ and $D'$, respectively,
then $ag + a'g'$ is a Green function of $aD + a'D$.
In particular, if $g$ is a Green function of the trivial Cartier divisor or the trivial $\mathbb K$-Cartier divisor, then it extends to a continuous function on $X^{\mathrm{an}}$. 

Let $\{g_n\}_{n=1}^\infty$ be a sequence of $D$-Green functions of $C^0$-type and $g$ be a $D$-Green function of $\mathcal C^0$-type. Let $\theta_n$ be the unique continuous extension of $g-g_n$ on $X^{\mathrm{an}}$. We say that the sequence $\{g_n\}_{n=1}^\infty$ \emph{converges uniformly} to $g$ if $\lim_{n\rightarrow\infty}\|\theta_n\|_{\sup}=0$.
\end{Definition}

\begin{Example}
Let $f$ be an element in $\mathrm{Rat}(X)^{\times}_{\mathbb R}$ and $(f)$ be the $\mathbb R$-Cartier divisor on $X$ defined by $f$. Then the element $-\log|f|\in\widehat{C}^0(X^{\mathrm{an}})$ is a Green function of $(f)$. 
\end{Example}

\begin{Remark}\label{Rem: continuous metric}

Green functions are closely related to continuous metrics on line bundles. Let $L$ be an invertible $\mathcal O_X$-module. By \emph{continuous metric} on $L$, we refer to a family $\phi=(|\ndot|_\phi(x))_{x\in X^{\mathrm{an}}}$, where for each $x\in X^{\mathrm{an}}$, $|\ndot|_{\phi}(x)$ is a norm on $L\otimes_{\mathcal O_X}\kappa(x)$, which defines a morphism of sheaves (of sets) from $L$ to $j_*(\mathcal C_{X^{\mathrm{an}}}^0)$, with $\mathcal C_{X^{\mathrm{an}}}^0$ being the sheaf of continuous real functions on $X^{\mathrm{an}}$. If $L$ is an invertible $\mathcal O_X$-module equipped with a continuous metric $\phi$, for any non-zero rational section $s$ of $L$, the function $-\log|s|_\phi$, which is well defined on a Zariski open subset of $X^{\mathrm{an}}$, determines a Green function of the Cartier divisor associated with $s$. Conversely, given a Cartier divisor $D$ on $X$ equipped with a Green function of $C^0$-type $g$, the section $-D$ of $\mathscr M_X^{\times}/\mathcal O_X^{\times}$ defines an invertible sub-$\mathcal O_X$-module of $\mathscr M_X$, denoted by $\mathcal O_X(D)$, where $\mathscr M_X$ is the sheaf of rational functions on $X$. The element $-D\in\Gamma(X,\mathscr M_X^{\times}/\mathcal O_X^{\times})$ also  determines a rational section of $\mathcal O_X(D)$ denoted by $s_D$. If $f$ is a non-zero rational function of $X$ which defines the divisor $D$ on a non-empty Zariski open subset $U$, then the element $f^{-1}s_D$ is a rational section of $\mathcal O_X(D)$ which determines a regular section $s_U$ of $\mathcal O_X(D)$ on $U$ trivialising the invertible sheaf on $U$. By definition $g+\log|f|$ extends to a continuous function on $U^{\mathrm{an}}$. For any $x\in U^{\mathrm{an}}$, we let $|\ndot|_{g}(x)$ be the norm on $L\otimes_{\mathcal O_X}\kappa(x)$ such that \[|s_U(x)|_g(x)=\exp(-(g+\log|f|)(x)).\]
It does not depend on the choice of $(U,f)$. Moreover, the family of norms $(|\ndot|_g(x))_{x\in X^{\mathrm{an}}}$ defines a continuous metric on $\mathcal O_X(D)$, denoted by $\phi_g$.
\end{Remark}

\begin{Proposition}
\label{prop:exist:Green}
For any $\RR$-Cartier divisor $D$ on $X$, there is a Green function of $D$.
\end{Proposition}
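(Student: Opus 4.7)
The plan is to reduce the statement to the Cartier case and then to construct the desired Green function by patching local models using a partition of unity on the Berkovich space $X^{\mathrm{an}}$. Since Definition~\ref{Def:Green functions} guarantees that the space of Green functions is $\mathbb{R}$-linear in the underlying divisor, and since any $\mathbb{R}$-Cartier divisor is an $\mathbb{R}$-linear combination of Cartier divisors, it suffices to construct a Green function of an arbitrary Cartier divisor $D$.

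Because $X$ is Noetherian, I would cover $X$ by finitely many Zariski open subsets $U_1, \ldots, U_n$ on which $D$ admits local equations $f_1, \ldots, f_n \in \mathrm{Rat}(X)^{\times}$. On each overlap $U_i \cap U_j$, the ratio $f_i/f_j$ is a regular unit, so $\log|f_i/f_j|$ defines a continuous function on $(U_i \cap U_j)^{\mathrm{an}}$. Since $X$ is projective, $X^{\mathrm{an}}$ is compact Hausdorff, hence paracompact and normal, so there exists a continuous partition of unity $\{\rho_i\}_{i=1}^{n}$ on $X^{\mathrm{an}}$ subordinate to the open cover $\{U_i^{\mathrm{an}}\}_{i=1}^{n}$, that is, $\supp(\rho_i) \subset U_i^{\mathrm{an}}$ and $\sum_i \rho_i \equiv 1$.

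Next, I would define $g$ on each $U_j^{\mathrm{an}} \setminus \Supp(D)^{\mathrm{an}}$ by
\[
g|_{U_j^{\mathrm{an}}} := -\log|f_j| - \sum_{i=1}^{n} \rho_i \log|f_i/f_j|,
\]
where each summand $\rho_i \log|f_i/f_j|$, a priori defined only on $(U_i \cap U_j)^{\mathrm{an}}$, is extended to $U_j^{\mathrm{an}}$ by zero outside $\supp(\rho_i)$; this extension is continuous because $\rho_i$ vanishes on an open neighbourhood of every point outside its support. Using $\sum_i \rho_i = 1$, one checks that the local definitions agree on overlaps, producing a continuous function $g$ on $X^{\mathrm{an}} \setminus \Supp(D)^{\mathrm{an}}$, hence an element of $\widehat{C}^0(X^{\mathrm{an}})$. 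Finally, given any local equation $f$ of $D$ on a Zariski open $V$, the identity
\[
g + \log|f| = \log|f/f_j| - \sum_{i=1}^{n} \rho_i \log|f_i/f_j|
\]
on $V^{\mathrm{an}} \cap U_j^{\mathrm{an}}$ shows that $g+\log|f|$ extends continuously to $V^{\mathrm{an}} \cap U_j^{\mathrm{an}}$, since $f/f_j$ is a regular unit on $V \cap U_j$; patching over $j$ yields a continuous extension on all of $V^{\mathrm{an}}$.

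The only genuinely non-formal input is the existence of the partition of unity, which rests on the compactness and normality of the Berkovich space of a projective scheme; apart from this, the construction is a straightforward patching verification and should cause no difficulty.
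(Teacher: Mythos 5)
Your proof is correct but takes a genuinely different route from the paper's. The paper reduces, via the same linearity in Definition~\ref{Def:Green functions} that you also invoke, to the case of an \emph{ample} Cartier divisor, and then constructs a Green function explicitly by embedding $X$ in $\PP^N$ by a very ample multiple and pulling back the standard function $\log\max\{1,|z_1|,\ldots,|z_N|\}$ of the hyperplane $\{T_0=0\}$; a general Cartier divisor is then treated as a difference of ample ones. You instead handle an arbitrary Cartier divisor directly, patching the local contributions $-\log|f_j|$ with a continuous partition of unity $\{\rho_i\}$ on the compact Hausdorff space $X^{\mathrm{an}}$ subordinate to the finite open cover $\{U_i^{\mathrm{an}}\}$. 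Your overlap computation is sound (on $X^{\mathrm{an}}\setminus\Supp(D)^{\mathrm{an}}$ the local definitions all equal $-\sum_i\rho_i\log|f_i|$), the zero-extension of each $\rho_i\log|f_i/f_j|$ is continuous precisely because $\supp\rho_i\subseteq U_i^{\mathrm{an}}$, and rewriting $g+\log|f|=\sum_i\rho_i\log|f/f_i|$ gives the required continuous extension for an arbitrary local equation $f$. The trade-off is as follows: the paper's Green function arises from a quotient (Fubini--Study-type) metric, a structure exploited later in the article, while yours is a purely topological patch with no extra positivity or regularity attached; on the other hand, your argument is leaner, using projectivity only through compactness of $X^{\mathrm{an}}$ and the standard fact that finite open covers of compact Hausdorff spaces admit subordinate partitions of unity.
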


\begin{proof}
First we assume that $D$ is an ample Cartier divisor.
Let $m$ be a positive integer such that $mD$ is very ample.
Let $s_0, \ldots, s_N$ be a basis of $H^0(\OO_X(mD))$ and
$\phi : X \to \PP^N = \Proj(K[T_0, \ldots, T_N])$ be the morphism given by 
\[
x \mapsto (s_0(x) : \ldots : s_N(x)).
\]
We set $z_i = T_i/T_0$ for $i=1, \ldots, N$ and $g_0 = \log \max \{ 1, |z_1|, \ldots, |z_n| \}$.
Then it is easy to see that $g_0$ is a Green function of $H_0 := \{ T_0 = 0 \}$.
Thus $\phi^*(g_0)$ is a Green function of $\phi^*(H_0)$.
We choose $\theta \in \Rat(X)^{\times}$ such that $mD = \phi^*(H_0) + (\theta)$.
Then $(1/m)(\phi^*(g_0) - \log |\theta|)$ is a Green function of $D$.

Next we assume that $D$ is a Cartier divisor.
Then there are ample Cartier divisors $A$ and $B$ with $D = A - B$.
Let $g_A$ and $g_B$ be Green functions of $A$ and $B$, respectively.
Then $g_A - g_B$ is a Green function of $D$.

In general, there are Cartier divisors $D_1, \ldots, D_r$ and
$a_1, \ldots, a_r \in \RR$ with $D = a_1 D_1 + \cdots + a_r D_r$.
Let $g_{D_i}$ be a Green function of $D_i$.
Then $a_1 g_{D_1} + \cdots + a_r g_{D_r}$ is a Green function of $D$.
\end{proof}

\begin{Proposition}\label{Pro:e-gextension}
Let $D$ be an effective  $\mathbb R$-Cartier divisor on $X$ (see Conventions and terminology~\ref{CT:R:Cartier:div}) 
and $g$ be a Green function of $D$. Then the element $\mathrm{e}^{-g}$ of $\widehat{C}^0(X)$ extends to a non-negative continuous function on $X^{\mathrm{an}}$. 
In particular, there is a constant $C$ such that $g \geqslant C$ on $X^{\mathrm{an}}$.
\end{Proposition}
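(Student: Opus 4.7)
The plan is to reduce the claim to a local computation by exploiting the decomposition of $D$ as a positive real combination of effective Cartier divisors, and then globalize by continuity. Using the definition of $\KK$-effectivity in Conventions and terminology~\ref{CT:R:Cartier:div}, I write $D = a_1 D_1 + \cdots + a_r D_r$ with each $a_i \in \RR_{>0}$ and each $D_i$ an effective Cartier divisor.

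Fix a point $x \in X$ and choose an affine Zariski open neighborhood $U$ of $x$ on which each $D_i$ is defined by a regular function $f_i \in \Gamma(U, \OO_X)$ (possible since $D_i$ is an effective Cartier divisor). Then $f := f_1^{a_1} \cdots f_r^{a_r} \in \Rat(X)^{\times}_{\RR}$ is a local equation of $D$ on $U$, so by Definition~\ref{Def:Green functions} the class $\varphi := g + \log|f|$ extends to a continuous function on $U^{\mathrm{an}}$. On the Zariski dense open subset of $U^{\mathrm{an}}$ where $f$ is invertible one has the identity $e^{-g} = e^{-\varphi} \cdot |f|$, so the whole task reduces to checking that the right-hand side extends continuously to all of $U^{\mathrm{an}}$.

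The key observation is that $|f|$ admits a canonical continuous representative $|f_1|^{a_1} \cdots |f_r|^{a_r}$ on $U^{\mathrm{an}}$. Each $|f_i|$ is continuous and non-negative on $U^{\mathrm{an}}$ since $f_i$ is regular on $U$, and $t \mapsto t^{a_i}$ is continuous on $[0, +\infty)$ for $a_i > 0$ with the convention $0^{a_i} = 0$. Multiplying by the strictly positive continuous function $e^{-\varphi}$ yields a continuous non-negative function on $U^{\mathrm{an}}$ that agrees with $e^{-g}$ on the dense open locus where $f$ is invertible, giving the desired local extension.

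To glue, cover $X$ by finitely many such affine opens $U$. Two local extensions constructed above must agree on each overlap, since both are continuous functions on a Hausdorff space (recall $X^{\mathrm{an}}$ is Hausdorff because $X$ is separated) coinciding on a dense open subset. This yields a global continuous non-negative extension of $e^{-g}$ to $X^{\mathrm{an}}$. For the final statement, compactness of $X^{\mathrm{an}}$ (since $X$ is projective) forces this extension to attain a finite maximum $M$, and setting $C := -\log M$ gives $g \geqslant C$. The only slightly delicate point is that $|f|$ is a priori only an element of $\widehat{C}^0(X^{\mathrm{an}})$; extracting a concrete continuous representative from the effectivity decomposition is precisely what makes the exponential argument well-behaved at points lying over $\Supp(D)$.
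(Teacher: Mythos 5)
Your proof is correct and follows essentially the same route as the paper's: write $\mathrm{e}^{-g} = |f|\cdot\mathrm{e}^{-(g+\log|f|)}$ locally for a local equation $f$ of $D$, observe that both factors extend continuously (the second is positive, the first non-negative), and then glue and invoke compactness. The only difference is that you make explicit — via the decomposition $D = a_1D_1 + \cdots + a_rD_r$ into effective Cartier divisors and $|f| = |f_1|^{a_1}\cdots|f_r|^{a_r}$ — why $|f|$ admits a continuous non-negative representative, a point the paper's proof leaves implicit.
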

\begin{proof}
Let $f$ be a local equation of $D$ on a Zariski open subset $U$ of $X$. Note that
the element $g+\log|f|$ of $\widehat{C}^0(X^{\mathrm{an}})$ extends to a continuous function on $U^{\mathrm{an}}$. Hence $\mathrm{e}^{-g}=|f|\cdot \mathrm{e}^{-(g+\log|f|)}$ extends to a continuous function on $U^{\mathrm{an}}$, which is non-negative. By gluing continuous functions we obtain that $\mathrm{e}^{-g}$ extends to a continuous function on $X^{\mathrm{an}}$.
For the last assertion, note that $X^{\mathrm{an}}$ is compact, so that there is a constant $C$ such that
$\mathrm{e}^{-g} \leqslant\mathrm{e}^{-C}$ on $X^{\mathrm{an}}$, as required.
\end{proof}

\begin{Remark}\label{Rem:effective}
Let $D$ be an effective $\mathbb R$-Cartier divisor and $g$ be a Green function of $D$. The above proposition shows that the element $\mathrm{e}^{-g}$ extends to a continuous function on $X^{\mathrm{an}}$. By abuse of notation, we use the expression $g$ to denote the map $-\log(\mathrm{e}^{-g}):X^{\mathrm{an}}\rightarrow\mathbb R\cup\{+\infty\}$, where we consider $\mathrm{e}^{-g}$ as a continuous function from $X^{\mathrm{an}}$ to $[0,+\infty[$. 
\end{Remark}

\begin{DRemark} 
\label{DRem:R:Cartier:vs:Cartier}
Let $\KK$ be either $\QQ$ or $\RR$.
In the case where $X$ is normal, for a Cartier divisor $D$ on $X$,
the effectivity of $D$ as a Cartier divisor is equivalent to the effectivity of
$D$ as a $\KK $-Cartier divisor by algebraic Hartogs' property
\footnote{If $\phi$ is a rational function on
a normal algebraic variety $V$ over a field and
$\phi$ is regular at every codimension $1$ point of $V$, then $\phi$ is regular on $V$.}.
However, 
if $X$ is not normal, then 
a Cartier divisor which is effective as a $\KK $-Cartier divisor
is not necessarily effective as a Cartier divisor. For example,
we set $X := \Proj(K[T_0, T_1, T_2]/(T_0T_2^2 - T_1^3)$, $U_i := \{ T_i \not= 0 \}\cap X$ 
($i=0,1,2$) and $x := T_1/T_0, y := T_2/T_0$ on $U_0$. 
Then $U_0 = X \setminus \{ (0:0:1) \}$ ans $U_2 = X \setminus \{ (1:0:0) \}$, so that $X = U_0 \cup U_2$.
Note that $y/x \in \OO_{X, \zeta}^{\times}$ for all $\zeta \in U_0 \cap U_2$.
Let $D$ be a Cartier divisor on $X$ given by
\[
D = \begin{cases}
(y/x) & \text{on $U_0$}, \\
(1) & \text{on $U_2$}.
\end{cases}
\]
As $y/x$ is not regular at $(1:0:0)$, $D$ is not effective as a Cartier divisor.
On the other hand, since
\[
2D = \begin{cases}
(x) & \text{on $U_0$}, \\
(1) & \text{on $U_2$},
\end{cases}
\]
$D$ is effective as a $\KK $-Cartier divisor.
As a consequence, $1 \not\in H^0(X, \OO_X(D))$ and $1 \in H^0(X, D)$, that is,
$H^0(X, \OO_X(D)) \to H^0(X, D)$ is not surjective.

\medskip
From now on, we assume that $\mathrm{char}(K) = 2$. We set $U'_0 := U_0 \setminus \{ (1 : 1 : 1) \}$.
Note that $X = U'_0 \cup U_2$ and
$1 + y/x \in \OO^{\times}_{X, \zeta}$ for all $\zeta \in U'_0 \cap U_2$, so that
we set
\[
D' := \begin{cases}
(1 + y/x) & \text{on $U'_0$}, \\
(1) & \text{on $U_2$}.
\end{cases}
\]
Since $y/x$ is not regular at $(1:0:0)$, we have $D' \not= 0$.
Moreover, as $(1 + y/x)^2 = 1 + x$, we have
\[
2D' = \begin{cases}
(1+x) & \text{on $U'_0$}, \\
(1) & \text{on $U_2$},
\end{cases}
\]
and hence $2D' = 0$ because $1 + x \in \OO_{X, \zeta}^{\times}$ for all $\zeta \in U'_0$.
Therefore, the natural homomorphism $\Div(X) \to \Div_{\KK }(X)$ is not injective.
Furthermore $\Supp_{\KK }(D') = \emptyset$, but $\Supp_{\ZZ}(D') = \{ (1:0:0) \}$.
\end{DRemark}

\subsection{Plurisubharmonic Green functions}
Let $K$ be a field equipped with a \emph{non-archimedean} complete absolute value $|\ndot|$ and $X$ be an integral projective scheme over $\Spec K$.
For each $\xi \in X^{\mathrm{an}}$, the residue field of the associated scheme point of $\xi$
is denoted by $\kappa(\xi)$.
Let $\hat{\kappa}(\xi)$ be the completion of $\kappa(\xi)$ with respect to
the absolute value $|\ndot|_\xi$ (see \S\ref{Reminder Berkovich}).
Let $L$ be an invertible sheaf on $X$.
Let $\overline{V} = (V, \|\ndot\|)$ be a finite-dimensional vector space equipped with an ultrametric norm
$\|\ndot\|$.
We assume that there is a surjective homomorphism 
$\pi : V \otimes_K \OO_X \to L$.
For each $\xi \in X^{\mathrm{an}}$,
let $\|\ndot\|_{\hat{\kappa}(\xi)}$ be the norm of $V \otimes_K \hat{\kappa}(\xi)$
obtained by the scaler extension of $\|\ndot\|$, which is by definition the operator norm on $V\otimes_K\widehat{\kappa}(\xi)\cong\Hom_K(V^\vee,\widehat{\kappa}(\xi))$ (cf. \cite[\S1.3.4]{Extension}).
The quotient norm of $L(\xi) := L \otimes_{\OO_X} \hat{\kappa}(\xi)$
induced by $\|\ndot\|_{\hat{\kappa}(\xi)}$ and the surjective
homomorphism $V \otimes_K \hat{\kappa}(\xi) \to L(\xi)$ 
is denoted by $|\ndot|^{\mathrm{quot}}_{\overline{V}}(\xi)$.
Note that $\{ |\ndot|^{\mathrm{quot}}_{\overline{V}}(\xi) \}_{\xi \in X^{\mathrm{an}}}$
yields a continuous metric on $L$ (cf. \cite[Corollary~3.4]{Extension}).

\begin{Definition}
We assume that $L$ is semiample.
A continuous metric $h = \{ |\ndot|_h(x) \}_{x\in X^{\mathrm{an}}}$ on $L$ is said to be {\em semipositive} if there are
a sequence $\{ e_n \}_{n\in\mathbb N}$ of positive integers and a sequence $\{ \overline{V}_n \}_{n\in\mathbb N}$
of normed finite-dimensional vector spaces over $K$ such that
there is a surjective homomorphism $V_n \otimes_K \OO_X \to L^{\otimes e_n}$ for each $n$ and
the sequence
\[
\left\{ \frac{1}{e_n} \log \frac{|\ndot|^{\mathrm{quot}}_{\overline{V}_n}(\xi)}{|\ndot|_{h^{e_n}}(\xi)}
\right\}_{n\in\mathbb N}
\]
converges to $0$ uniformly on $X^{\mathrm{an}}$.
In other words,
if we choose a non-zero rational section $s$ of $L$, then
the sequence 
\[
\left\{ \frac{1}{e_n} \log |s^{e_n} |^{\mathrm{quot}}_{\overline{V}_n}(\xi) \right\}_{n \in\mathbb N}
\]
of $\zeros(s)$-Green functions
converges to $\log |s|_h(\xi)$ uniformly (cf. Definition~\ref{Def:Green functions}).
\end{Definition}

We recall a characterisation of semipositive metrics as follows.

\begin{Proposition}[{\cite[Corollary~3.11]{Extension}}]
\label{prop:characterization:semipositive}
Let $L$ be a semiample invertible sheaf on $X$ and $h$ be a continuous metric on $L$.
Then $h$ is semipositive if and only if, for any $\epsilon > 0$,
there is a positive integer $n$ such that, for any $\xi \in X^{\mathrm{an}}$,
we can find $s \in H^0(X, L^n)_{\hat{\kappa}(\xi)} \setminus \{ 0 \}$
with $\| s \|_{h^n, \hat{\kappa}(\xi)} \leqslant e^{n\epsilon} |s|_{h^n}(\xi)$.
\end{Proposition}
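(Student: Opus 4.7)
The plan is to read the right-hand condition as the pointwise comparison, up to the multiplicative factor $e^{n\epsilon}$, between $|\cdot|_{h^n}$ and the quotient metric on $L^n$ induced by the evaluation map $H^0(X,L^n)\otimes_K\OO_X\to L^n$ (surjective when $L^n$ is globally generated) and the sup norm $\|\cdot\|_{h^n}$. Since semipositivity is by definition the uniform convergence, after scaling by $1/e_n$, of such quotient metrics to $h^{e_n}$, the equivalence will reduce to passing between a general admissible datum $(e_n,\overline{V}_n,\pi_n)$ and the canonical one in which $V_n=H^0(X,L^n)$, $e_n=n$, and $\pi_n$ is evaluation.

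For the sufficient direction, fix $\epsilon>0$ and let $n$ be as given by the hypothesis. Since $L$ is semiample, some $L^m$ is globally generated, and by replacing $n$ by $mn$ (which still satisfies the hypothesis, since any power $s^m$ of a witnessing section verifies the analogous inequality with the same $\epsilon$) we may assume $L^n$ is globally generated. Put $\overline{V}_n:=(H^0(X,L^n),\|\cdot\|_{h^n})$, $e_n:=n$, with $\pi_n$ the evaluation map. The trivial bound $|s(\xi)|_{h^n}\leqslant\|s\|_{h^n,\hat{\kappa}(\xi)}$ yields $|\cdot|_{h^n}\leqslant|\cdot|^{\mathrm{quot}}_{\overline{V}_n}$, while a witnessing $s$ with $\ell=s(\xi)$ gives $|\ell|^{\mathrm{quot}}_{\overline{V}_n}(\xi)\leqslant\|s\|_{h^n,\hat{\kappa}(\xi)}\leqslant e^{n\epsilon}|\ell|_{h^n}$. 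Dividing the resulting logarithmic ratio by $n$ and letting $\epsilon\to 0$ produces the approximating sequence required by the definition of semipositivity.

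For the necessary direction, assume $h$ semipositive with data $(e_n,\overline{V}_n,\pi_n)$. Given $\epsilon>0$, take $n$ so that $\tfrac{1}{e_n}\bigl|\log\bigl(|\cdot|^{\mathrm{quot}}_{\overline{V}_n}/|\cdot|_{h^{e_n}}\bigr)\bigr|<\epsilon/3$ uniformly on $X^{\mathrm{an}}$. The sheaf morphism $\pi_n$ corresponds to a $K$-linear map $V_n\to H^0(X,L^{e_n})$, $v\mapsto s_v$; taking the supremum over $x\in X^{\mathrm{an}}$ of $|s_v(x)|^{\mathrm{quot}}_{\overline{V}_n}\leqslant\|v\|$ and applying the approximation yields the operator-norm estimate $\|s_v\|_{h^{e_n}}\leqslant e^{e_n\epsilon/3}\|v\|$, which is preserved under scalar extension to $\hat{\kappa}(\xi)$. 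At a fixed $\xi$, invoke the definition of the quotient norm to select $v\in V_n\otimes_K\hat{\kappa}(\xi)$ with $\pi_{n,\xi}(v)\neq 0$ and $\|v\|_{\hat{\kappa}(\xi)}\leqslant e^{e_n\epsilon/3}|s_v(\xi)|^{\mathrm{quot}}_{\overline{V}_n}$; chaining with the approximation once more gives
\[
\|s_v\|_{h^{e_n},\hat{\kappa}(\xi)}\leqslant e^{e_n\epsilon/3}\|v\|_{\hat{\kappa}(\xi)}\leqslant e^{2e_n\epsilon/3}|s_v(\xi)|^{\mathrm{quot}}_{\overline{V}_n}\leqslant e^{e_n\epsilon}|s_v(\xi)|_{h^{e_n}},
\]
so $s=s_v$ together with the integer $e_n$ in place of $n$ verifies the required inequality.

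The main obstacle is the bookkeeping of norms under scalar extension: specifically, that the operator norm of a $K$-linear map between normed spaces is preserved under base change to $\hat{\kappa}(\xi)$, that the extension norm $\|\cdot\|_{\hat{\kappa}(x)}$ on $V_n\otimes_K\hat{\kappa}(x)$ dominates the quotient-norm evaluation $|\pi_n(v)(x)|^{\mathrm{quot}}_{\overline{V}_n}$ for every $x$ simultaneously, and that approximate infima for the quotient norm can be realised over $\hat{\kappa}(\xi)$. These are precisely the analytic facts supplied by \cite{Extension}; once they are in hand, the proof is the direct unwinding of the definition of semipositivity outlined above.
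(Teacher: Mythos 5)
The paper does not give its own proof of this statement: it is imported verbatim from \cite[Corollary~3.11]{Extension}, with the definition of semipositivity stated just above serving only as a reminder of what the cited result characterises. So there is no internal argument in the paper against which to compare your proposal; I can only examine whether your argument is a correct independent deduction from the stated definition.

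Your proof is essentially sound. Both directions correctly reduce the stated condition to the comparison, pointwise on $X^{\mathrm{an}}$, of a quotient metric with $h^n$, and both make correct use of the fact that $L^n(\xi)$ is a line over $\hat{\kappa}(\xi)$, so that a one-sided comparison at a single nonzero class determines the whole ratio. Two load-bearing steps deserve explicit mention, because they are precisely where the analytic input from \cite{Extension} enters, and glossing past them is where a reader would object. First, in the sufficient direction your ``replace $n$ by $mn$'' move requires $\| s^m \|_{h^{mn},\hat{\kappa}(\xi)} \leqslant \| s\|_{h^n,\hat{\kappa}(\xi)}^m$; this holds, but only once one knows that the operator-norm scalar extension of the sup norm $\|\cdot\|_{h^n}$ on $H^0(X,L^n)$ coincides with the sup norm over $X^{\mathrm{an}}_{\hat{\kappa}(\xi)}$ of sections of $L^n_{\hat{\kappa}(\xi)}$ (so submultiplicativity becomes the obvious geometric one). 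Second, the inequality $|\cdot|^{\mathrm{quot}}_{\overline{V}_n}(\xi) \geqslant |\cdot|_{h^n}(\xi)$ that you use as ``the trivial bound'' rests on the same identification (the evaluation at the canonical lift of $\xi$ to $X^{\mathrm{an}}_{\hat{\kappa}(\xi)}$ being dominated by the base-changed sup norm). You flag these as bookkeeping, which is fair, but in a self-contained write-up they should be cited to a precise statement in \cite{Extension} rather than treated as routine, since the equality of the tensor-extension norm with the geometric sup norm is a theorem there, not a formality. With that caveat, the structure of your argument — converting a general admissible datum $(e_n,\overline{V}_n,\pi_n)$ to the canonical one via the pushforward $V_n\to H^0(X,L^{e_n})$, and conversely reading the stated inequality as controlling the quotient-metric ratio — is correct and matches the intent of the cited reference.
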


\begin{Proposition}
\label{prop:basic:prop:semipositive}
Let $L$ and $L'$
be semiample invertible sheaves on $X$ and $h$ and $h'$ be continuous metrics on $L$ and $L'$, respectively.
\begin{enumerate}
\renewcommand{\labelenumi}{(\arabic{enumi})}
\item
If $h$ and $h'$ are semipositive, then
the metric $h \otimes h'$ on $L\otimes L'$ is also semipositive.

\item
Let $f : Y \to X$ be a morphism of projective integral schemes over $\Spec K$.
If $h=\{|\ndot|_h(x)\}_{x\in X^{\mathrm{an}}}$ is semipositive, then
$f^*(L)$ is semiample and
$(f^{\mathrm{an}})^*(h)=\{|\ndot|_h(f^{\mathrm{an}}(y))_{\hat{\kappa}(y)}\}_{y\in Y^{\mathrm{an}}}$ is a semipositive metric on $f^*(L)$, where $|\ndot|_h(f^{\mathrm{an}}(y))_{\hat{\kappa}(y)}$ denotes the norm on $f^*(L)\otimes_{\mathcal O_Y}\hat{\kappa}(y)\cong L(x)\otimes_{\hat{\kappa}(x)}\hat{\kappa}(y)$ induced by $|\ndot|_h(f^{\mathrm{an}}(y))$ by extension of scalars.

\item
Let $\{ h_n \}_{n=1}^{\infty}$ be a sequence of semipositive metrics of $L^{\mathrm{an}}$.
If
\[
\left\{ \log \frac{|\ndot|_{h_n}}{|\ndot|_h} \right\}_{n=1}^{\infty}
\]
converges to $0$ uniformly, then $h$ is semipositive.

\item
The following are equivalent:
\begin{enumerate}
\renewcommand{\labelenumii}{(\arabic{enumi}.\arabic{enumii})}
\item
$h$ is semipositive.

\item
$h^n$ is semipositive for all $n \geqslant 1$.

\item
$h^n$ is semipositive for some $n \geqslant 1$.
\end{enumerate}
\end{enumerate}
\end{Proposition}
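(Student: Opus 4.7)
The plan is to treat the four items in order. Parts (1) and (3) are most cleanly handled via the characterisation in Proposition~\ref{prop:characterization:semipositive}, part (2) via the defining sequences of quotient norms together with their compatibility with base change, and (4) by iterating (1) and rescaling the exponents in the defining sequence.

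For part (1), I would apply Proposition~\ref{prop:characterization:semipositive} in both directions. Given $\epsilon>0$, the characterisation applied to $h$ and $h'$ with threshold $\epsilon/2$ produces integers $n_1,n_2$ and, for each $\xi\in X^{\mathrm{an}}$, sections $s\in H^0(X,L^{n_1})_{\hat{\kappa}(\xi)}\setminus\{0\}$ and $s'\in H^0(X,L'^{n_2})_{\hat{\kappa}(\xi)}\setminus\{0\}$ satisfying the respective bounds. Setting $n=n_1n_2$ and replacing $s,s'$ by $s^{n_2},s'^{n_1}$ preserves the ratio between the sup norm and the pointwise norm on the exponential scale, because the sup norm is submultiplicative on tensor powers of sections while the pointwise norm is multiplicative. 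Then $s^{n_2}\otimes s'^{n_1}\in H^0(X,(L\otimes L')^n)_{\hat{\kappa}(\xi)}$ satisfies $|s^{n_2}\otimes s'^{n_1}|_{(h\otimes h')^n}(\xi)=|s^{n_2}|_{h^n}(\xi)|s'^{n_1}|_{h'^n}(\xi)$ and, by submultiplicativity of the sup norm under tensor products, $\|s^{n_2}\otimes s'^{n_1}\|_{(h\otimes h')^n,\hat{\kappa}(\xi)}\leqslant\|s^{n_2}\|_{h^n,\hat{\kappa}(\xi)}\|s'^{n_1}\|_{h'^n,\hat{\kappa}(\xi)}\leqslant e^{n\epsilon}|s^{n_2}\otimes s'^{n_1}|_{(h\otimes h')^n}(\xi)$, so $h\otimes h'$ is semipositive by Proposition~\ref{prop:characterization:semipositive}.

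For part (2), I would work with the definition directly. A sequence of surjections $\pi_n:V_n\otimes_K\OO_X\twoheadrightarrow L^{e_n}$ witnessing the semipositivity of $h$ pulls back to $f^*(\pi_n):V_n\otimes_K\OO_Y\twoheadrightarrow f^*(L)^{e_n}$, which in particular shows that $f^*(L)$ is semiample. The crucial functoriality is that for $y\in Y^{\mathrm{an}}$ the quotient norm $|\ndot|^{\mathrm{quot}}_{\overline{V}_n,Y}(y)$ on $f^*(L)^{e_n}\otimes\hat{\kappa}(y)$ coincides with the extension of scalars to $\hat{\kappa}(y)$ of $|\ndot|^{\mathrm{quot}}_{\overline{V}_n}(f^{\mathrm{an}}(y))$, and similarly $|\ndot|_{(f^{\mathrm{an}})^*(h)^{e_n}}(y)$ is the extension of $|\ndot|_{h^{e_n}}(f^{\mathrm{an}}(y))$; both are consequences of the compatibility of quotient norms and of continuous metrics with complete extensions of the base field, as developed in \cite{Extension}. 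Since extension of scalars preserves the logarithmic ratio and $f^{\mathrm{an}}$ is continuous, uniform convergence on $X^{\mathrm{an}}$ transports to uniform convergence on $Y^{\mathrm{an}}$, yielding semipositivity of $(f^{\mathrm{an}})^*(h)$.

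Part (3) follows by combining the characterisation with a triangle inequality: given $\epsilon>0$, pick $N$ so that $\sup_{X^{\mathrm{an}}}|\log(|\ndot|_{h_N}/|\ndot|_h)|<\epsilon/3$, apply the characterisation to $h_N$ with threshold $\epsilon/3$, and observe that passing from $h_N^m$ to $h^m$ multiplies each side of the approximation bound by a factor at most $e^{m\epsilon/3}$, yielding the characterisation for $h$ with threshold $\epsilon$. For part (4), $(4.1)\Rightarrow(4.2)$ is iterated (1) and $(4.2)\Rightarrow(4.3)$ is immediate; for $(4.3)\Rightarrow(4.1)$, any approximating data $(e_k,\overline{V}_k,V_k\otimes_K\OO_X\twoheadrightarrow L^{ne_k})$ for $h^n$ serve, after relabelling the exponents as $f_k=ne_k$, as approximating data for $h$, since $\frac{1}{f_k}\log\bigl(|\ndot|^{\mathrm{quot}}_{\overline{V}_k}/|\ndot|_{h^{f_k}}\bigr)=\frac{1}{n}\cdot\frac{1}{e_k}\log\bigl(|\ndot|^{\mathrm{quot}}_{\overline{V}_k}/|\ndot|_{(h^n)^{e_k}}\bigr)$ tends to $0$ uniformly. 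The main obstacle is the functoriality invoked in part~(2): once the extension-of-scalars identities for quotient norms and continuous metrics are in hand, all remaining arguments are essentially formal.
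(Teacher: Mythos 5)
Your proof is correct, and for parts (1), (3), and (4) it takes essentially the same route as the paper: all three are handled via the single-section characterisation of Proposition~\ref{prop:characterization:semipositive}, with the same tensor-power trick (raising $s$ and $s'$ to the exponents $n'$ and $n$) for (1), the same triangle inequality for (3), and the same relabelling of exponents for $(4.3)\Rightarrow(4.1)$.

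For part (2) you take a genuinely different route. You work directly with the defining sequences $V_n\otimes_K\OO_X\twoheadrightarrow L^{e_n}$: you pull them back to $Y$, observe that this exhibits $f^*(L)$ as semiample, and then invoke the compatibility of quotient norms and scalar extension with $f^{\mathrm{an}}$ to transport uniform convergence from $X^{\mathrm{an}}$ to $Y^{\mathrm{an}}$. The paper instead applies the characterisation Proposition~\ref{prop:characterization:semipositive} directly: for each $\zeta\in Y^{\mathrm{an}}$ it finds a good section $s\in H^0(X,L^n)_{\hat\kappa(f^{\mathrm{an}}(\zeta))}$, notes that since $|s|_{h^n}(f^{\mathrm{an}}(\zeta))>0$ the section is nonvanishing at the relevant fibre so that its pullback $f^*_{\hat\kappa(f^{\mathrm{an}}(\zeta))}(s)$ is again nonzero, and then bounds $\|s'\|_{(f^{\mathrm{an}})^*(h^n),\hat\kappa(\zeta)}\leqslant\|s\|_{h^n,\hat\kappa(f^{\mathrm{an}}(\zeta))}$ using only that the sup norm decreases under pullback. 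Your route requires the stronger functoriality statement that quotient norms commute with complete scalar extension (correctly attributed to \cite{Extension}, where orthogonal bases are used to establish it), whereas the paper's route only needs a sup-norm comparison for a single pulled-back section and so is technically lighter. Both arguments are valid; yours makes the definitional picture more transparent, the paper's minimises the amount of normed-module functoriality one has to trust.
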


\begin{proof}
(1) As $h$ and $h'$ are semipositive,
by Proposition~\ref{prop:characterization:semipositive},
for any $\epsilon > 0$,
there are positive integers $n$ and $n'$ such that,
for all $\xi \in X^{\mathrm{an}}$,
we can find
\[
s \in H^0(X, L^n)_{\hat{\kappa}(\xi)} \setminus \{ 0 \}
\quad\text{and}\quad 
s' \in H^0(X, L^{n'})_{\hat{\kappa}(\xi)} \setminus \{ 0 \}
\]
with 
\[
\| s \|_{h^n, \hat{\kappa}(\xi)} \leqslant e^{n\epsilon} |s|_{h^n}(\xi)
\quad\text{and}\quad
\| s' \|_{h^{n'}, \hat{\kappa}(\xi)} \leqslant e^{n'\epsilon} |s'|_{h^{n'}}(\xi).
\]
Then $s^{n'} {s'}^{n} \in H^0(X, (L\otimes L')^{nn'})_{\hat{\kappa}(\xi)} \setminus \{ 0 \}$
and
\begin{align*}
\| s^{n'} {s'}^{n} \|_{(h \otimes h)^{nn'}, \hat{\kappa}(\xi)} & \leqslant
\left(\| s \|_{h^{n}, \hat{\kappa}(\xi)}\right)^{n'} \left(\| s' \|_{h^{n'}, \hat{\kappa}(\xi)}\right)^{n} \\
& \leqslant e^{nn'\epsilon} (|s|_{h^{n}}(\xi))^{n'}(|s'|_{h^{n'}}(\xi))^n \\
& = e^{nn'\epsilon} |s^{n'} {s'}^n|_{(h\otimes h')^{nn'}}(\xi).
\end{align*}
Therefore, by Proposition~\ref{prop:characterization:semipositive} again,
$h \otimes h'$ is semipositive.

\medskip
(2) The semiampleness of $f^*(L)$ is obvious. By Proposition~\ref{prop:characterization:semipositive},
for any $\epsilon > 0$, there is a positive integer $n$ such that,
for any $\zeta \in Y^{\mathrm{an}}$, we can find $s \in H^0(X, L^{n})_{\hat{\kappa}(f^{\mathrm{an}}(\zeta))} \setminus \{ 0 \}$
with $\| s \|_{h^n, \hat{\kappa}(f^{\mathrm{an}}(\zeta))} \leqslant e^{n\epsilon} |s|_{h^n}(f^{\mathrm{an}}(\zeta))$.
Then, as $s$ is not zero at the scheme point of $f^{\mathrm{an}}(\zeta)$,
\[
f^*_{\hat{\kappa}(f^{\mathrm{an}}(\zeta))}(s) \in H^0(Y, f^*(L))_{\hat{\kappa}(f^{\mathrm{an}}(\zeta))} \setminus \{ 0 \},
\]
where $f^*_{\hat{\kappa}(f^{\mathrm{an}}(\zeta))} : H^0(X, L)_{\hat{\kappa}(f^{\mathrm{an}}(\zeta))}
\to H^0(Y, f^*(L))_{\hat{\kappa}(f^{\mathrm{an}}(\zeta))}$
is the natural homomorphism.
Thus, if we set 
\[
s' = f^*_{\hat{\kappa}(f^{\mathrm{an}}(\zeta))}(s) \otimes_{\hat{\kappa}(f^{\mathrm{an}}(\zeta))} 1_{\hat{\kappa}(\zeta)} \in 
H^0(Y, f^*(L))_{\hat{\kappa}(\zeta)} \setminus \{ 0 \},
\]
Then
\begin{align*}
\| s' \|_{(f^{\mathrm{an}})^*(h^n), \hat{\kappa}(\zeta)} & = \|
f^*_{\hat{\kappa}(f^{\mathrm{an}}(\zeta))}(s)  
\|_{(f^{\mathrm{an}})^*(h^n), \hat{\kappa}(f^{\mathrm{an}}(\zeta))} \\
& \leqslant \| s \|_{h^n, \hat{\kappa}(f^{\mathrm{an}}(\zeta))} \leqslant e^{n\epsilon} |s|_{h^n}(f^{\mathrm{an}}(\zeta)) \\
& = e^{n\epsilon} |s'|_{(f^{\mathrm{an}})^*(h^n)}(\zeta),
\end{align*}
so that the assertion follows from Proposition~\ref{prop:characterization:semipositive}.

\medskip
(3) For $\epsilon > 0$, there is a positive integer $n_0$ such that
\[
e^{-\epsilon} \leqslant \frac{|\ndot|_{h_{n_0}}}{|\ndot|_h} \leqslant e^{\epsilon}\quad\text{on $X^{\mathrm{an}}$}.
\]
Moreover, as $h_{n_0}$ is semipositive, 
there is a positive integer $n_1$ such that, for any $\xi \in X^{\mathrm{an}}$,
we can find $s \in H^0(X, L^{n_1})_{\hat{\kappa}(\xi)} \setminus \{ 0 \}$ with
$\| s \|_{h_{n_0}^{n_1}, \hat{\kappa}(\xi)} \leqslant e^{n_1 \epsilon} |s|_{h_{n_0}^{n_1}}(\xi)$, and hence
\begin{align*}
\| s\|_{h^{n_1}, \hat{\kappa}(\xi)} \leqslant e^{n_1\epsilon}  \| s \|_{h_{n_0}^{n_1}, \hat{\kappa}(\xi)} \leqslant 
e^{2 n_1\epsilon} |s|_{h_{n_0}^{n_1}}(\xi) \leqslant e^{3 n_1\epsilon} |s|_{h^{n_1}}(\xi),
\end{align*}
so that $h$ is semipositive by Proposition~\ref{prop:characterization:semipositive}.

\medskip
(4) ``(4.1) $\Longrightarrow$ (4.2)'' is a consequence of (1).
``(4.2) $\Longrightarrow$ (4.3)'' is obvious.
We can easily check ``(4.3) $\Longrightarrow$ (4.1)'' by using Proposition~\ref{prop:characterization:semipositive}.
\end{proof}

\begin{Definition}
Let $D$ be a semiample $\QQ$-Cartier divisor on $X$ and $g$ be a $D$-Green function of $C^0$-type.
We say that $g$ is \emph{of plurisubharmonic type} (or \emph{plurisubharmonic}) if there is a positive integer $n$ such that
$nD$ is a Cartier divisor and $|\ndot|_{ng}$ is a semipositive metric of $\OO_X(nD)$.
Note that, by (4) in Proposition~\ref{prop:basic:prop:semipositive},
the last condition does not depend on the choice of $n$.
Moreover, if $nD$ is a Cartier divisor for some positive integer $n$, then $|\ndot|_{ng}$ is semipositive.
\end{Definition}

\begin{Proposition}[$\QQ$-version]
\label{prop:basic:prop:pluri:Q}
Let $D$ and $D'$ be semiample $\QQ$-Cartier divisors on $X$, and let $g$ and $g'$ be
plurisubharmonic Green functions of $D$ and $D'$, respectively.
Then we have the following:
\begin{enumerate}
\renewcommand{\labelenumi}{(\arabic{enumi})}
\item For $\phi \in \Rat(X)^{\times}_{\QQ}$, $-\log |\phi|$ is a $(\phi)$-Green function
of plurisubharmonic type.

\item
For all $a, a' \in \QQ_{>0}$, $ag + a'g'$ is also of plurisubharmonic type.

\item
Let $f : Y \to X$ be a morphism of projective integral schemes over $K$ such that
$f(Y) \not\subseteq \Supp(D)$.
Then $f^*(D)$ is semiample and
$(f^{\mathrm{an}})^*(g)$ is an $f^*(D)$-Green function of plurisubharmonic type.

\item
Let $\{ g_n \}_{n\in\mathbb N}$ be a sequence of $D$-Green functions of plurisubharmonic type.
If $\{ g_n \}_{n\in\mathbb N}$ converges a $D$-Green function $g$ uniformly
(cf. Definition \ref{Def:Green functions}), then
$g$ is also of plurisubharmonic type.
\end{enumerate}
\end{Proposition}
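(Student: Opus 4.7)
My plan is to reduce each of the four assertions to the corresponding statement of Proposition~\ref{prop:basic:prop:semipositive} about semipositive metrics, by passing to a sufficiently divisible integer multiple that makes every $\QQ$-Cartier divisor in sight into an honest Cartier divisor.

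For (1), I first treat the case $\phi\in\Rat(X)^{\times}$. Taking $\phi$ itself as a local equation of the principal divisor $(\phi)$ everywhere, Remark~\ref{Rem: continuous metric} shows that the trivialising regular section $\phi^{-1}s_{(\phi)}\in H^0(X,\OO_X((\phi)))$ has constant norm $\exp(-(-\log|\phi|+\log|\phi|))=1$. Hence $|\cdot|_{-\log|\phi|}$ is the trivial metric on $\OO_X((\phi))\cong \OO_X$, and to check that it is semipositive I apply Proposition~\ref{prop:characterization:semipositive} with $n=1$ and $s=1\in H^0(X,\OO_X)$: the section $1$ has $\|1\|_{\hat\kappa(\xi)}=1=|1|(\xi)$ for every $\xi\in X^{\mathrm{an}}$, so the required inequality holds with $\epsilon=0$. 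For a general $\phi\in\Rat(X)^{\times}_{\QQ}$, pick $n\in\ZZ_{>0}$ with $\phi^n\in\Rat(X)^{\times}$; then $n(-\log|\phi|)=-\log|\phi^n|$ is plurisubharmonic by the integral case, so $-\log|\phi|$ is plurisubharmonic by definition.

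For (2), choose $n\in\ZZ_{>0}$ divisible enough that $naD$, $na'D'$ and $n(aD+a'D')$ are all (semiample) Cartier divisors. The hypotheses provide semipositive metrics $|\cdot|_{nag}$ and $|\cdot|_{na'g'}$ on $\OO_X(naD)$ and $\OO_X(na'D')$, whose tensor product $|\cdot|_{n(ag+a'g')}$ on $\OO_X(n(aD+a'D'))$ is semipositive by Proposition~\ref{prop:basic:prop:semipositive}\,(1). For (3), the hypothesis $f(Y)\not\subseteq\Supp(D)$ ensures that $f^*D$ is well defined and that $(f^{\mathrm{an}})^*g$ is a Green function of it; the semiampleness of $f^*D$ follows from that of $D$ since pullback preserves global generation of line bundles. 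Choosing $n$ with $nD$ Cartier, the semipositive metric $|\cdot|_{ng}$ on $\OO_X(nD)$ pulls back to $|\cdot|_{n(f^{\mathrm{an}})^*g}$ on $\OO_Y(nf^*D)$, which is semipositive by Proposition~\ref{prop:basic:prop:semipositive}\,(2). For (4), fix $n$ with $nD$ Cartier; uniform convergence of $g_k\to g$ implies uniform convergence of $\log(|\cdot|_{ng_k}/|\cdot|_{ng})=n(g-g_k)$ to $0$, so Proposition~\ref{prop:basic:prop:semipositive}\,(3) gives the semipositivity of $|\cdot|_{ng}$.

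The one step requiring genuine care is (1): identifying the Green function $-\log|\phi|$ with the trivial metric on $\OO_X$ via the canonical trivialisation by $\phi$, and recognising the trivial metric as semipositive through Proposition~\ref{prop:characterization:semipositive}. The remaining items are essentially bookkeeping, clearing denominators so as to invoke Proposition~\ref{prop:basic:prop:semipositive} directly.
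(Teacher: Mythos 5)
Your argument follows the paper's proof in all essentials: for (1), identify $\OO_X((\phi))$ with $\OO_X$ via multiplication by $\phi^{-1}$ and observe that $|\ndot|_{-\log|\phi|}$ becomes the trivial metric (which is semipositive); for (2)--(4), clear denominators so that the $\QQ$-Cartier divisors become honest Cartier divisors and invoke parts (1)--(3) of Proposition~\ref{prop:basic:prop:semipositive}. The only difference is that you spell out the semipositivity of the trivial metric via Proposition~\ref{prop:characterization:semipositive}, while the paper leaves it implicit; this is the same route, just with one more step written out.
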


\begin{proof}
(1) Clearly we may assume that $\phi \in \Rat(X)^{\times}$.
We set $D = (\phi)$ and $g = -\log |\phi|$.
As $\OO_X(D) = \OO_X \phi^{-1}$, the homomorphism $\OO_X \to \OO_X(D)$
given by $1 \mapsto \phi^{-1}$ yields an isomorphism.
If we give the trivial metric $|\ndot|$ to $\OO_X$,
then the above isomorphism gives rise to
an isometry $(\OO_X, |\ndot|) \simeq (\OO_X(D), |\ndot|_g)$.
Thus the assertion follows.

(2), (3) and (4) follows from (1), (2), (3) in Proposition~\ref{prop:basic:prop:semipositive}.
\end{proof}

\begin{Definition}
Let $D$ be an $\RR$-Cartier divisor on $X$.
We assume that $D$ is semiample, that is,
there are semiample Cartier divisors $A_1, \ldots, A_r$ and
$a_1, \ldots, a_r \in \RR_{>0}$ such that $D= a_1A_1 + \cdots + a_r A_r$
(cf. Conventions and terminology~\ref{CT:semiample}).
We say a $D$-Green function $g$ of $C^0$-type is said to be
{\em of plurisubharmonic type} (or plurisubharmonic) if there is a sequence $\{ g_n \}_{n\in\mathbb N}$ of
$D$-Green functions of $C^0$-type with the following conditions:
\begin{enumerate}
\renewcommand{\labelenumi}{(\arabic{enumi})}
\item
Let $\theta_n$ be the continuous extension of $g - g_n$ on $X^{\mathrm{an}}$.
Then $\lim_{n\to\infty} \| \theta_n \|_{\sup} = 0$.

\item
For each $n$, there are semiample $\QQ$-Cartier divisors $A_{n1}, \ldots, A_{nr_n}$ on $X$,
plurisubharmonic Green function $g_{n1}, \ldots, g_{nr_n}$ of $A_{n1}, \ldots, A_{nr_n}$, respectively and
positive real numbers $a_{n1}, \ldots, a_{nr_n}$ such that $D = a_{n1}A_{n1} + \cdots + a_{nr_n} A_{nr_n}$ and
$g_n = a_{n1}g_{n1} + \cdots + a_{nr_n} g_{nr_n}$.
\end{enumerate}
We refer the readers to \cite[\S3]{Gubler_Kunnemann} for more details about plurisubhamonic functions and semi-positive metrics.
\end{Definition}

The $\RR$-version of Proposition~\ref{prop:basic:prop:pluri:Q} can be checked by
using the $\QQ$-version.

\begin{Proposition}[$\RR$-version]
\label{prop:basic:prop:pluri:R}
Let $D$ and $D'$ be semiample $\RR$-Cartier divisors on $X$, and let $g$ and $g'$ be
plurisubharmonic Green functions of $D$ and $D'$, respectively.
Then we have the following:
\begin{enumerate}
\renewcommand{\labelenumi}{(\arabic{enumi})}
\item For $\phi \in \Rat(X)^{\times}_{\RR}$, $-\log |\phi|$ is a $(\phi)$-Green function
of plurisubharmonic type.

\item
For all $a, a' \in \RR_{>0}$, $ag + a'g'$ is also of plurisubharmonic type.

\item
Let $f : Y \to X$ be a surjective morphism of projective integral schemes over $K$
such that $f(Y) \not\subseteq \Supp(D)$.
Then $f^*(D)$ is semiample and
$(f^{\mathrm{an}})^*(g)$ is an $f^*(D)$-Green function of plurisubharmonic type.

\item
Let $\{ g_n \}$ be a sequence of $D$-Green functions of plurisubharmonic type.
If $\{ g_n \}$ converges a $D$-Green function $g$ uniformly
(cf. Definition \ref{Def:Green functions}), then
$g$ is also of plurisubharmonic type.
\end{enumerate}
\end{Proposition}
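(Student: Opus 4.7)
The plan is to deduce each of the four statements from the corresponding assertion in Proposition~\ref{prop:basic:prop:pluri:Q} by unpacking the definition of plurisubharmonic type in the $\RR$-setting. In each case the strategy is: choose a witnessing sequence from the $\QQ$-version, manipulate it, and verify that uniform convergence is preserved.

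For (1), given $\phi \in \Rat(X)^{\times}_{\RR}$, write $\phi = \phi_1^{a_1}\cdots\phi_k^{a_k}$ with $\phi_i \in \Rat(X)^{\times}$ and $a_i \in \RR$. After absorbing signs by replacing $\phi_i$ with $\phi_i^{-1}$ when $a_i<0$, this rewrites $-\log|\phi| = \sum_j c_j(-\log|\psi_j|)$ with $c_j \in \RR_{>0}$ and $\psi_j \in \Rat(X)^{\times}$, and simultaneously $(\phi) = \sum_j c_j(\psi_j)$. Each $-\log|\psi_j|$ is a plurisubharmonic Green function of the Cartier (hence $\QQ$-Cartier) divisor $(\psi_j)$ by Proposition~\ref{prop:basic:prop:pluri:Q}(1), and the constant sequence $g_n := -\log|\phi|$ trivially satisfies the conditions in the definition. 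For (2), pick witnessing sequences $\{g_n\}$ and $\{g'_n\}$ for $g$ and $g'$; then $\{a g_n + a' g'_n\}$ converges uniformly to $ag + a'g'$ because the continuous extensions satisfy $\|a\theta_n + a'\theta'_n\|_{\sup} \leqslant a\|\theta_n\|_{\sup} + a'\|\theta'_n\|_{\sup}$, and concatenating the two decompositions yields an expression of the required form for $aD + a'D'$.

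For (3), since $f$ is surjective, $f(Y)$ meets the open set $X\setminus\Supp(D)$, so $f^*(D)$ makes sense. Writing $D = \sum a_i A_i$ with semiample Cartier $A_i$, we have $f^*(D) = \sum a_i f^*(A_i)$; each $f^*(A_i)$ is semiample because global sections generating $\OO_X(nA_i)$ pull back to global sections generating $f^*\OO_X(nA_i)$. Given a witness $\{g_n\}$ for $g$ with $g_n = \sum_i a_{ni} g_{ni}$, we have $(f^{\mathrm{an}})^*(g_n) = \sum_i a_{ni}(f^{\mathrm{an}})^*(g_{ni})$, and each $(f^{\mathrm{an}})^*(g_{ni})$ is plurisubharmonic for the semiample $\QQ$-Cartier divisor $f^*(A_{ni})$ by Proposition~\ref{prop:basic:prop:pluri:Q}(3); the continuous extension of $(f^{\mathrm{an}})^*(g) - (f^{\mathrm{an}})^*(g_n)$ equals $(f^{\mathrm{an}})^*(\theta_n)$, whose sup norm is bounded by $\|\theta_n\|_{\sup}$, so uniform convergence persists.

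For (4), I would use a diagonal argument: for each $n$ pick a witnessing sequence $\{g_{n,m}\}_m$ for $g_n$ (with $\QQ$-plurisubharmonic summands) and choose $m_n$ with $\|g_n - g_{n,m_n}\|_{\sup} < 1/n$; then $g_{n,m_n} \to g$ uniformly by the triangle inequality, and each $g_{n,m_n}$ already has the decomposition required by the definition. The only genuinely delicate point is the sign bookkeeping in (1) and, correspondingly, checking that after combining the decompositions in (2) and (4) the overall sum of $\QQ$-Cartier divisors really equals $D$ (resp.\ $aD + a'D'$); this is a formal check since all manipulations are linear. No new analytic input beyond Proposition~\ref{prop:basic:prop:pluri:Q} is needed.
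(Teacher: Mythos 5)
Your proof is correct and follows exactly the route the paper intends: the paper's own ``proof'' is the single sentence that the $\RR$-version can be checked by using the $\QQ$-version, and you have simply carried out that reduction case by case, including the sign normalization for (1), concatenation of witnessing sequences for (2), pullback of the decomposition (with surjectivity guaranteeing $f(Y)\not\subseteq\Supp(A_{ni})$ for every term) for (3), and a diagonal extraction for (4). No gaps; your write-up is more explicit than the paper but takes the same approach.
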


Finally let us see the following proposition:

\begin{Proposition}
\label{prop:Q:PSH:equiv:R:PSH}
Let $D$ be a semiample $\QQ$-Cartier divisor on $X$ and $g$ be a $D$-Green function of $C^0$-type.
The Green function $g$ is of plurisubharmonic type as a $\QQ$-Cartier divisor if and only if
$g$ is of plurisubharmonic type as an $\RR$-Cartier divisor.
\end{Proposition}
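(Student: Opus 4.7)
The plan is to treat the two implications separately. The forward direction $(\QQ$-plurisubharmonic $\Longrightarrow$ $\RR$-plurisubharmonic$)$ is immediate: if $g$ is plurisubharmonic as a $\QQ$-Cartier divisor Green function, take the constant sequence $g_n := g$ with $r_n = 1$, $A_{n1} := D$, $a_{n1} := 1$, $g_{n1} := g$; both conditions in the $\RR$-definition are trivially met. The interesting direction is the converse. Assume $g$ is plurisubharmonic in the $\RR$-sense, with approximating sequence $g_n = \sum_{i=1}^{r_n} a_{ni}\,g_{ni}$ converging uniformly to $g$, where $a_{ni} \in \RR_{>0}$, each $A_{ni}$ is a semiample $\QQ$-Cartier divisor, each $g_{ni}$ is a $\QQ$-plurisubharmonic Green function of $A_{ni}$, and $D = \sum_i a_{ni} A_{ni}$ in $\Div_\RR(X)$. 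The idea is to rationalise the coefficients $a_{ni}$, use Proposition~\ref{prop:basic:prop:pluri:Q}(2) to produce $\QQ$-plurisubharmonic Green functions of $D$, and conclude by Proposition~\ref{prop:basic:prop:pluri:Q}(4).

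For each $n$, consider the affine subspace
\[
T_n := \bigl\{(c_1,\ldots,c_{r_n}) \in \RR^{r_n}\ :\ \textstyle\sum_{i=1}^{r_n} c_i A_{ni} = D \text{ in } \Div_\RR(X)\bigr\}.
\]
Since $D, A_{n1},\ldots,A_{nr_n}$ lie in a finite-dimensional $\QQ$-subspace of $\Div_\QQ(X)$, the condition defining $T_n$ amounts to a linear system with $\QQ$-coefficients, so $T_n$ is a $\QQ$-rational affine subspace of $\RR^{r_n}$, non-empty because it contains $(a_{ni})_i$. Hence $\QQ$-points are dense in $T_n$; since $(a_{ni})_i$ has strictly positive entries, for any $\delta > 0$ we can pick $(b_{ni})_i \in T_n \cap \QQ_{>0}^{r_n}$ with $|a_{ni}-b_{ni}| < \delta$ for all $i$. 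Set $\tilde g_n := \sum_i b_{ni}\,g_{ni}$. As $\sum_i b_{ni} A_{ni} = D$ with $b_{ni} \in \QQ_{>0}$, Proposition~\ref{prop:basic:prop:pluri:Q}(2) (iterated) shows that $\tilde g_n$ is a $\QQ$-plurisubharmonic Green function of $D$.

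To finish, we must ensure $\tilde g_n \to g$ uniformly. The difference
\[
g_n - \tilde g_n \;=\; \sum_{i=1}^{r_n}(a_{ni}-b_{ni})\,g_{ni}
\]
is a Green function of the trivial $\RR$-Cartier divisor $\sum_i(a_{ni}-b_{ni})A_{ni}=0$, hence extends to a continuous function on the compact space $X^{\mathrm{an}}$. The restriction of the linear map $(c_1,\ldots,c_{r_n}) \mapsto \sum_i c_i\,g_{ni}$ to the kernel $S_n \subset \RR^{r_n}$ of $(c_i) \mapsto \sum_i c_i A_{ni}$ is a linear map from a finite-dimensional vector space into the Banach space $(C^0(X^{\mathrm{an}}),\|\ndot\|_{\sup})$, hence automatically continuous. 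Shrinking $\delta$ appropriately (depending on $n$) allows us to guarantee $\|g_n - \tilde g_n\|_{\sup} < 1/n$, and combining with the uniform convergence $g_n \to g$ yields $\tilde g_n \to g$ uniformly. Proposition~\ref{prop:basic:prop:pluri:Q}(4) then concludes that $g$ is $\QQ$-plurisubharmonic.

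The main obstacle is the rationalisation step: one must simultaneously preserve the identity $D = \sum_i b_{ni} A_{ni}$, maintain strict positivity of the $b_{ni}$, and control the sup norm of the perturbation $g_n-\tilde g_n$. The key observation making all of this work is that although the ambient spaces $\Div_\RR(X)$ and $\widehat C^0(X^{\mathrm{an}})$ are infinite-dimensional, the linear constraints involved live in a finite-dimensional $\QQ$-rational slice, so both density of $\QQ$-points and automatic continuity of the perturbation map are available for free.
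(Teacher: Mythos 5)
Your proposal is correct and follows essentially the same line of argument as the paper's proof: both rationalise the real coefficients within the $\QQ$-rational affine slice cut out by the linear constraint $\sum_i c_i A_{ni}=D$ (the paper isolates this as Claim~\ref{claim:prop:Q:PSH:equiv:R:PSH:01}), build $\QQ$-plurisubharmonic approximants via Proposition~\ref{prop:basic:prop:pluri:Q}(2), and conclude by uniform convergence via Proposition~\ref{prop:basic:prop:pluri:Q}(4). The only cosmetic differences are that the paper first invokes (4) to reduce to the case where $g$ itself is a single combination (you instead diagonalise over the whole approximating sequence), and where you appeal to automatic continuity of a linear map on a finite-dimensional space, the paper unwinds the same estimate by hand through a choice of basis $E_1,\ldots,E_s$ of $\QQ A_1+\cdots+\QQ A_r$; these are the same argument.
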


\begin{proof}
It is sufficient to show that if $g$ is plurisubharmonic as an $\RR$-Cartier divisor, then
$g$ is plurisubharmonic as a $\QQ$-Cartier divisor.
By (4) in Proposition~\ref{prop:basic:prop:pluri:Q},
we may assume that $g$ is obtained by the following way: 
there are semiample $\QQ$-Cartier divisors
$A_1, \ldots, A_r$ on $X$ and plurisubharmonic Green functions
$h_1, \ldots, h_r$ of $A_1, \ldots, A_r$, respectively
such that 
\[
D = a_1 A_1 + \cdots + a_r A_r
\quad\text{and}\quad 
g = a_1 h_1 + \cdots + a_r h_r
\]
for some $a_1, \ldots, a_r \in \RR_{>0}$. 
Here we claim the following:

\begin{Claim}
\label{claim:prop:Q:PSH:equiv:R:PSH:01}
Let $V$ be a vector space over $\QQ$. Then we have the following:
\begin{enumerate}
\renewcommand{\labelenumi}{(\roman{enumi})}
\item
$W_{\RR} \cap V = W$ for any a vector subspace $W$ of $V$.

\item
Let $x, x_1, \ldots, x_r \in V$ such that
$x = a_1 x_1 + \cdots + a_r x_r$
for some $a_1, \ldots, a_r \in \RR$. Then, for any $\epsilon > 0$,
there are $a'_1, \ldots, a'_r \in \QQ$ such that
$x = a'_1 x_1 + \cdots + a'_r x_r$ and $| a'_i - a_i | \leqslant \epsilon$ for all $i$.
\end{enumerate}
\end{Claim}

\begin{proof}
(i) is obvious because $V/W \to (V/W)_{\RR}$ is injective and $(V/W)_{\RR} = V_{\RR}/W_{\RR}$.

\smallskip
(ii)
We set $W := \QQ x_1 + \cdots + \QQ x_r$. Then, by (i),
there are $b_1, \ldots, b_r \in \QQ$ such that
$x = b_1 x_1 + \cdots + b_r x_r$.
Let us consider a homomorphism $\psi : \QQ^r \to V$ given by
$\psi(t_1, \ldots, t_r) = t_1 x_1 + \cdots + t_r x_r$.
We denote the scalar extension $\RR^r \to V_{\RR}$ by $\psi_{\RR}$, that is,
$\psi_{\RR}(\alpha_1, \ldots, \alpha_r) = \alpha_1 x_1 + \cdots + \alpha_r x_r$.
We set
\[
\delta := (a_1, \ldots, a_r) - (b_1, \ldots, b_r) \in \Ker(\psi_{\RR}).
\]
As $\Ker(\psi_{\RR}) = \Ker(\psi)_{\RR}$, $\Ker(\psi)$ is dense in $\Ker(\psi_{\RR})$, so that
there is $\delta' \in \Ker(\psi)$ 
such that $| \delta - \delta'| \leqslant \epsilon$,
where for $y = (y_1, \ldots, y_r) \in \RR^r$,
$|y| := \max \{ |y_1|, \ldots, |y_r| \}$.
Therefore, if we set 
\[
(a'_{1}, \ldots, a'_{r}) = (b_1, \ldots, b_r) + \delta',
\]
then $x = a'_{1} x_1 + \cdots + a'_{r} x_r$ ($a'_{1}, \ldots, a'_{r} \in \QQ$) and
$|a_{i} - a'_{i}| \leqslant \epsilon$ for all $i=1, \ldots, r$.
\end{proof}

By applying the above claim to the case where $V = \Div_{\QQ}(X)$,
$x = D$ and $x_i = A_i$ ($i=1, \ldots, r$),
there are sequences $\{ a_{n1} \}_{n=1}^{\infty}, \ldots, \{ a_{nr} \}_{n=1}^{\infty}$
of positive rational numbers such that 
\[
a_i = \lim_{n\to\infty} a_{ni}\quad (i=1, \ldots, r)
\quad\text{and}\quad
D = a_{n1}A_1 + \cdots + a_{nr} A_r.
\]
We set $g_n := a_{n1}h_1 + \cdots + a_{nr} h_r$. 
Then $g_n$ is a $D$-Green function of plurisubharmonic type by (2) in Proposition~\ref{prop:basic:prop:pluri:Q}.
Let $\theta_n$ be a continuous function on $X^{\mathrm{an}}$ with
$g - g_n = \theta_n$. It is sufficient to see that $\lim_{n\to\infty}\| \theta_n \|_{\sup} = 0$
by virtue of (4) in Proposition~\ref{prop:basic:prop:pluri:Q}.
If we set $b_{ni} := a_i - a_{ni}$, then $\lim_{n\to\infty} b_{ni} = 0$, $b_{n1} A_1 + \cdots + b_{nr} A_r = 0$ and
the continuous extension of 
$b_{n1} h_1 + \cdots + b_{nr} h_r$ is $\theta_n$.
Let $E_1, \ldots, E_s$ be a basis of the vector subspace $\QQ A_1 + \cdots + \QQ A_r$ of
$\Div_{\QQ}(X)$. We choose $\alpha_{i1}, \ldots, \alpha_{is} \in \QQ$ with
$A_i = \sum_{j=1}^s \alpha_{ij} E_j$.
Then, as
\[
0 = \sum_{i} b_{ni} A_i = \sum_{j=1}^s \left(\sum_{i=1}^r b_{ni}\alpha_{ij}\right) E_j,
\]
we have $\sum_{i=1}^r b_{ni}\alpha_{ij} = 0$ for all $n \geqslant 1$ and $j=1, \ldots, s$.
For each $j$, let $e_j$ be an $E_j$-Green function of $C^0$-type.
Then, for each $i$, there is a continuous function $\nu_i$ on $X^{\mathrm{an}}$
such that $h_i - \sum_{j=1}^s\alpha_{ij} e_j = \nu_i$. Note that
\begin{align*}
\theta_n & = \sum_{i=1}^r b_{ni} h_i = \sum_{i=1}^r b_{ni} \left(  \nu_i + \sum_{j=1}^s \alpha_{ij} e_j \right) \\
& = \sum_{i=1}^r b_{ni}\nu_i + \sum_{j=1}^s \left(\sum_{i=1}^r b_{ni} \alpha_{ij}\right)e_j 
= \sum_{i=1}^r b_{ni}\nu_i.
\end{align*}
Thus $\| \theta_n \|_{\sup} \leqslant \sum_{i=1}^r |b_{ni}| \| \nu_i \|_{\sup}$,
and hence the assertion follows.
\end{proof}

\subsection{Canonical Green functions with respect to endomorphisms}
\label{subsec:canonical:green:function}
Given a polarised dynamic system on a projective variety over $\Spec K$ one can attach to the polarisation divisor a canonical Green function, which is closely related to the canonical local height function. We refer the readers to \cite{Neron65} for the original work of N\'eron in the Abelian variety case, and to \cite{Call_Silverman93,Zhang95} for general dynamic systems in the setting of canonical local height and canonical metric respectively. See \cite{Kawaguchi_Silverman09} for the non-archimedean case. In the following, we recall the construction of the canonical Green functions of $\mathbb R$-Cartier divisors.

Let $f : X \to X$ be a surjective endomorphism of $X$ over $K$.
Let $D$ be an $\RR$-Cartier divisor on $X$.
We assume that there are a real number $d$ and $\varphi \in \Rat(X)^{\times}_{\RR}$ such that
$d > 1$ and $f^*(D) = dD + (\varphi)$.
We fix a Green function $g_0$ of $D$.
There exists a unique continuous function $\lambda$ on $X^{\mathrm{an}}$ such that
\[
(f^{\mathrm{an}})^*(g_0) = d g_0 - \log |\varphi|  + \lambda,
\]
where for any element $g\in\widehat{C}^0(X^{\mathrm{an}})$ represented by a continuous function $h:U^{\mathrm{an}}\rightarrow\mathbb R$, with $U$ being a non-empty Zariski open subset of $X$, the expression $(f^{\mathrm{an}})^*(g)$ denotes the element in $\widehat{C}^0(X^{\mathrm{an}})$ represented by the function $h\circ f^{\mathrm{an}}: f^{-1}(U)^{\mathrm{an}}\rightarrow\mathbb R$.
We set
\begin{equation}
\label{eqn:prop:unique:existence:canonical:Green:function:01}
h_n = \sum_{i=0}^{n-1} \frac{1}{d^{i+1}}((f^{\mathrm{an}})^i)^*(\lambda)\quad(n \geqslant 1).
\end{equation}

\begin{Lemma}
\label{lem:uniform:conv}
The sequence $\{ h_n \}_{n\geqslant 1}$ of continuous functions on $X^{\mathrm{an}}$
converges to a continuous function $h$ on $X^{\mathrm{an}}$ uniformly.
\end{Lemma}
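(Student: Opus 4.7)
The plan is to prove uniform convergence by showing that $\{h_n\}_{n\geqslant 1}$ is a Cauchy sequence in the supremum norm and then invoking completeness of $C^0(X^{\mathrm{an}})$.

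First I would observe that $X^{\mathrm{an}}$ is compact. Indeed, $X$ is projective over $\Spec K$, so its Berkovich analytification (in the non-archimedean case) and its complex analytification (in the archimedean case) are both compact topological spaces. Since $\lambda$ is a continuous function on the compact space $X^{\mathrm{an}}$, it is bounded, and we may set $M := \|\lambda\|_{\sup} < +\infty$.

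Next, for each $i \geqslant 0$, the map $(f^{\mathrm{an}})^i \colon X^{\mathrm{an}} \to X^{\mathrm{an}}$ is continuous, so $((f^{\mathrm{an}})^i)^*(\lambda) = \lambda \circ (f^{\mathrm{an}})^i$ is continuous with $\|((f^{\mathrm{an}})^i)^*(\lambda)\|_{\sup} \leqslant M$. Hence, for any integers $n > m \geqslant 1$,
\[
\|h_n - h_m\|_{\sup} \leqslant \sum_{i=m}^{n-1} \frac{1}{d^{i+1}} \|((f^{\mathrm{an}})^i)^*(\lambda)\|_{\sup} \leqslant M \sum_{i=m}^{n-1} \frac{1}{d^{i+1}} \leqslant \frac{M}{d^{m+1}} \cdot \frac{1}{1 - d^{-1}}.
\]
Since $d > 1$, the right-hand side tends to $0$ as $m \to \infty$, so $\{h_n\}_{n\geqslant 1}$ is a Cauchy sequence in $C^0(X^{\mathrm{an}})$ with respect to the supremum norm.

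Finally, since $X^{\mathrm{an}}$ is compact, the space $(C^0(X^{\mathrm{an}}), \|\ndot\|_{\sup})$ is a Banach space, so the Cauchy sequence $\{h_n\}_{n\geqslant 1}$ converges uniformly to a continuous function $h$ on $X^{\mathrm{an}}$, as required. There is no substantive obstacle here; the argument is essentially the standard Weierstrass $M$-test applied to the geometric majorant $\sum_{i\geqslant 0} M d^{-(i+1)}$, with the only mild point being to record compactness of $X^{\mathrm{an}}$ in both the archimedean and non-archimedean cases so that boundedness of $\lambda$ and completeness of the space of continuous functions are guaranteed.
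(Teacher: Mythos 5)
Your argument is correct and is essentially identical to the paper's: both bound $\|h_n-h_m\|_{\sup}$ by the tail of the geometric series $\sum_i\|\lambda\|_{\sup}/d^{i+1}$ using $d>1$ and then invoke completeness of $C^0(X^{\mathrm{an}})$ under the sup norm. The only difference is expository — you spell out the compactness of $X^{\mathrm{an}}$ and the Banach-space step that the paper leaves implicit after ``Thus the lemma follows.''
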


\begin{proof}
If $n > m$, then
\begin{align*}
\| h_n - h_m \|_{\sup} & \leqslant \sum_{i=m}^{n-1} \frac{1}{d^{i+1}}
\|((f^{\mathrm{an}})^i)^*(\lambda) \|_{\sup} = \frac{\| \lambda \|_{\sup} }{d^{m+1}}
\sum_{i=0}^{n-m-1} \frac{1}{d^{i}} \\
& \leqslant \frac{\| \lambda \|_{\sup} }{d^{m+1}} \sum_{i=0}^{\infty} \frac{1}{d^{i}} =
\frac{\| \lambda \|_{\sup}}{d^m(d - 1)}.
\end{align*}
Thus the lemma follows.
\end{proof}

\begin{Proposition}
\label{prop:unique:existence:canonical:Green:function}
There is a unique Green function of $D$ with $(f^{\mathrm{an}})^*(g) = d g - \log |\varphi|$ on $X^{\mathrm{an}}$.
\end{Proposition}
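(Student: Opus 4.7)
The natural candidate is $g := g_0 + h$, where $h$ is the uniform limit from Lemma~\ref{lem:uniform:conv}. Since $h$ is a continuous function on the whole space $X^{\mathrm{an}}$, the sum $g_0 + h$ is automatically a Green function of $D$ (adding a globally continuous function to a Green function of $D$ yields another Green function of $D$, by Definition~\ref{Def:Green functions}). So the work lies entirely in verifying the functional equation and uniqueness.

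\textbf{Existence via telescoping.} I would verify the identity
\[
dh_n - (f^{\mathrm{an}})^*(h_n) \;=\; \lambda \;-\; \frac{1}{d^{n}}((f^{\mathrm{an}})^n)^*(\lambda),
\]
which is a direct telescoping of the definition \eqref{eqn:prop:unique:existence:canonical:Green:function:01}. Since $\|d^{-n}((f^{\mathrm{an}})^n)^*(\lambda)\|_{\sup}\leqslant d^{-n}\|\lambda\|_{\sup}\to 0$ and $h_n\to h$ uniformly (and pullback by the continuous map $f^{\mathrm{an}}$ preserves uniform convergence), passing to the limit gives
\[
dh - (f^{\mathrm{an}})^*(h) = \lambda \quad\text{on } X^{\mathrm{an}}.
\]
Combined with the defining relation $(f^{\mathrm{an}})^*(g_0)=dg_0-\log|\varphi|+\lambda$, this yields $(f^{\mathrm{an}})^*(g) = dg - \log|\varphi|$, as required.

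\textbf{Uniqueness.} Suppose $g$ and $g'$ are two Green functions of $D$ satisfying the functional equation. Their difference $\theta := g - g'$ is a Green function of the trivial $\RR$-Cartier divisor, hence extends to a continuous function on the compact space $X^{\mathrm{an}}$. The functional equation gives $(f^{\mathrm{an}})^*(\theta) = d\theta$, and by iteration $((f^{\mathrm{an}})^n)^*(\theta) = d^n\theta$. Taking sup norms and using that pullback by a continuous map does not increase the sup norm, we get $d^n\|\theta\|_{\sup} = \|((f^{\mathrm{an}})^n)^*(\theta)\|_{\sup} \leqslant \|\theta\|_{\sup}$ for all $n$. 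Since $d>1$, this forces $\|\theta\|_{\sup}=0$, i.e.\ $g=g'$.

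The main (minor) subtlety is justifying the commutation $(f^{\mathrm{an}})^*(\lim_n h_n) = \lim_n (f^{\mathrm{an}})^*(h_n)$ uniformly: this is immediate because $f^{\mathrm{an}}$ is continuous and pullback by a continuous map is an isometry for the sup norm on $C^0(X^{\mathrm{an}})$, so uniform convergence of $\{h_n\}$ transfers to uniform convergence of $\{(f^{\mathrm{an}})^*(h_n)\}$. Everything else is a routine telescoping argument together with the contraction estimate coming from $d>1$.
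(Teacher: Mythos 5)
Your proposal is correct and follows essentially the same approach as the paper: take $g = g_0 + h$ where $h$ is the uniform limit from Lemma~\ref{lem:uniform:conv}, verify the functional equation by passing to the limit in a recursion/telescoping identity for $h_n$, and prove uniqueness by a contraction argument on the difference $\theta$ using $d>1$. The only cosmetic difference is in uniqueness: the paper uses the equality $\|\theta\|_{\sup} = \|(f^{\mathrm{an}})^*(\theta)\|_{\sup}$ (valid since $f$ is surjective, hence $f^{\mathrm{an}}$ is surjective) to conclude in one step, while you use non-expansiveness and iterate; both give the same conclusion, and indeed you note at the end that the pullback is an isometry, which would have let you drop the iteration. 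Your telescoping identity $dh_n - (f^{\mathrm{an}})^*(h_n) = \lambda - d^{-n}((f^{\mathrm{an}})^n)^*(\lambda)$ is an equivalent repackaging of the paper's $(f^{\mathrm{an}})^*(h_n) = dh_{n+1} - \lambda$; both pass cleanly to the limit.
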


\begin{proof}
Let us begin with the uniqueness of $g$.
Let $g'$ be another Green function of $D$ with $(f^{\mathrm{an}})^*(g') = d g' - \log |\varphi|$ on $X^{\mathrm{an}}$.
Then $(f^{\mathrm{an}})^*(g'-g) = d(g' - g)$ on $X^{\mathrm{an}}$. Note that there is a continuous function $\theta$ on
$X^{\mathrm{an}}$ with $\theta = g' - g$, so that $(f^{\mathrm{an}})^*(\theta) = d(\theta)$. Here we consider the sup norm
$\|\ndot\|_{\sup}$ of continuous functions. Then
\[
\| \theta \|_{\sup} = \| (f^{\mathrm{an}})^*(\theta) \|_{\sup} = \| d\theta \|_{\sup} = d \| \theta \|_{\sup},
\]
and hence $\| \theta \|_{\sup} = 0$. Therefore, $\theta = 0$.

\medskip
Since
\[
(f^{\mathrm{an}})^*(h_n) = \sum_{i=0}^{n-1} \frac{1}{d^{i+1}}((f^{\mathrm{an}})^{i+1})^*(\lambda) = d h_{n+1} - \lambda,
\]
we have $(f^{\mathrm{an}})^*(h) = d h - \lambda$, so that if we $g = g_0 + h$, then
\[
(f^{\mathrm{an}})^*(g) =  (d g_0 - \log |\varphi|  + \lambda) + (d h - \lambda) = d g - \log |\varphi|,
\]
as required.
\end{proof}

A Green function $g$ of $D$ is called the \emph{canonical Green function} of $D$ with respect to $f$ if
$(f^{\mathrm{an}})^*(g) = d g - \log |\varphi|$ on $X^{\mathrm{an}}$.

\begin{Lemma}
\label{lem:canonical:Green:principal}
For $\theta \in \Rat(X)^{\times}_{\RR}$, we have the following:
\begin{enumerate}
\item
$f^*(D + (\theta)) = d(D + (\theta)) + \left(f^*(\theta)\theta^{-d}\varphi\right)$.

\item
The canonical Green function of $D + (\theta)$ is given by
$g - \log | \theta |$.
\end{enumerate}
\end{Lemma}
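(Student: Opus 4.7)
The plan is to prove both assertions by direct manipulation. Statement~(1) is a purely algebraic identity in $\Div_{\RR}(X)$, and statement~(2) reduces to verifying the canonical functional equation of Proposition~\ref{prop:unique:existence:canonical:Green:function} for the proposed Green function and then invoking uniqueness.

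For (1), I would begin with the $\RR$-linearity of pullback,
\[
f^*(D + (\theta)) = f^*(D) + f^*((\theta)) = f^*(D) + (f^*(\theta)),
\]
substitute the hypothesis $f^*(D) = dD + (\varphi)$, and rewrite the resulting sum as
\[
d(D + (\theta)) + (\varphi) + (f^*(\theta)) - d(\theta) = d(D+(\theta)) + (f^*(\theta)\theta^{-d}\varphi),
\]
where the last equality collects all $\RR$-principal divisors into one using the standard multiplicative identities $(\phi_1)+(\phi_2)=(\phi_1\phi_2)$ and $-d(\theta)=(\theta^{-d})$, valid in $\Rat(X)^{\times}_{\RR}$ as recalled in Conventions and terminology~\ref{CT:tensor:R}.

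For (2), set $g' := g - \log|\theta|$. Since $-\log|\theta|$ is a Green function of the $\RR$-principal divisor $(\theta)$ (the example preceding Definition~\ref{Def:Green functions}), and sums of Green functions are Green functions of the sum, $g'$ is a Green function of $C^0$-type for $D+(\theta)$. Applying $(f^{\mathrm{an}})^*$ termwise, using $(f^{\mathrm{an}})^*(g) = dg - \log|\varphi|$ from Proposition~\ref{prop:unique:existence:canonical:Green:function} and $(f^{\mathrm{an}})^*(\log|\theta|) = \log|f^*(\theta)|$, and then regrouping, gives
\[
(f^{\mathrm{an}})^*(g') = dg' - \log|f^*(\theta)\theta^{-d}\varphi|,
\]
which, in view of~(1), is exactly the defining equation of the canonical Green function of $D+(\theta)$ with respect to $f$. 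The uniqueness assertion in Proposition~\ref{prop:unique:existence:canonical:Green:function} then identifies $g'$ as that canonical Green function.

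The main obstacle is essentially only notational: one has to interpret $(\varphi)$, $(\theta)$, and $(f^*(\theta)\theta^{-d}\varphi)$ consistently as $\RR$-principal divisors, and to verify that the $\RR$-rational function $f^*(\theta)\theta^{-d}\varphi$ really makes sense as an element of $\Rat(X)^{\times}_{\RR}$, which is immediate from the multiplicative tensor convention of~\ref{CT:tensor:R}. No geometric or analytic subtlety enters.
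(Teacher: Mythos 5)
Your proof is correct and follows essentially the same route as the paper's: statement (1) by elementary manipulation of $\RR$-principal divisors (which the paper simply labels ``obvious''), and statement (2) by verifying the functional equation $(f^{\mathrm{an}})^*(g') = dg' - \log|f^*(\theta)\theta^{-d}\varphi|$ and invoking the uniqueness clause in Proposition~\ref{prop:unique:existence:canonical:Green:function}. The computations match the paper's line for line; your write-up simply makes explicit the step the authors suppressed.
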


\begin{proof}
(1) is obvious. Since
\begin{align*}
(f^{\mathrm{an}})^*(g - \log | \theta |) & = d g - \log | \varphi | - \log |f^*(\theta))| \\
& = d(g  - \log |\theta|) - \log |f^*(\theta)\theta^{-d}\varphi|,
\end{align*}
the assertion (2) follows.
\end{proof}

We set
\begin{equation}
\begin{cases}
g_n := g_0 + h_n &  (n \geqslant 1)\\
{\displaystyle \varphi_0 = 1, \quad \varphi_n = \prod_{i=0}^{n-1} (f^i)^*(\varphi)^{1/d^{i+1}}} &  (n \geqslant 1).
\end{cases}
\end{equation}
Let us see the following facts:

\begin{Lemma}
\label{lem:recursive:relations}
\begin{enumerate}
\renewcommand{\labelenumi}{\rom{(\arabic{enumi})}}
\item $(f^{\mathrm{an}})^*(g_{n-1}) = d g_n  - \log |\varphi|$ and
$f^*(\varphi_{n-1}) = \varphi_n^d/\varphi$ for all $n \geqslant 1$.

\item
If $D \geqslant 0$ and $g_0 \geqslant 0$, then $D + (\varphi_n) \geqslant 0$ and $g_{n}  - \log |\varphi_n| \geqslant 0$ for all $n \geqslant 0$.
\end{enumerate}
\end{Lemma}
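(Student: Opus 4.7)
The plan is to treat (1) as a direct verification from the recursive definitions of $h_n$ and $\varphi_n$, and then to deduce (2) by induction on $n$, pulling back the nonnegativity at step $n-1$ through the morphism $f$ and using the identities of (1) to rewrite everything in terms of the step-$n$ quantities. Part~(1) is purely algebraic; part~(2) is an induction built on top of part~(1).

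For the first identity of (1), I would start from the definition $h_n = \sum_{i=0}^{n-1} d^{-(i+1)}((f^{\mathrm{an}})^i)^*(\lambda)$ and compute
\[
dh_n \;=\; \sum_{i=0}^{n-1} d^{-i}\bigl((f^{\mathrm{an}})^i\bigr)^*(\lambda) \;=\; \lambda + (f^{\mathrm{an}})^*(h_{n-1}).
\]
Combined with the defining relation $(f^{\mathrm{an}})^*(g_0) = d g_0 - \log|\varphi| + \lambda$, this yields $(f^{\mathrm{an}})^*(g_{n-1}) = (f^{\mathrm{an}})^*(g_0) + (f^{\mathrm{an}})^*(h_{n-1}) = d g_n - \log|\varphi|$. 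For the second identity I reindex: $f^*(\varphi_{n-1}) = \prod_{j=1}^{n-1} (f^j)^*(\varphi)^{1/d^j}$, whereas $\varphi_n^d = \varphi \cdot \prod_{j=1}^{n-1} (f^j)^*(\varphi)^{1/d^j}$, and hence $f^*(\varphi_{n-1}) = \varphi_n^d/\varphi$.

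For (2) I will induct on $n$; the case $n=0$ is immediate since $\varphi_0 = 1$ and $h_0 = 0$, so the two assertions reduce to the hypotheses $D \geqslant 0$ and $g_0 \geqslant 0$. For the inductive step, from $D + (\varphi_{n-1}) \geqslant 0$ the pullback $f^*(D + (\varphi_{n-1}))$ remains effective; Lemma~\ref{lem:canonical:Green:principal}(1) applied with $\theta = \varphi_{n-1}$, together with the second part of (1), identifies this pullback with $d\bigl(D + (\varphi_n)\bigr)$, and dividing by $d>0$ gives $D + (\varphi_n) \geqslant 0$. Analogously, applying $(f^{\mathrm{an}})^*$ to $g_{n-1} - \log|\varphi_{n-1}|$ and combining the two identities of (1),
\[
(f^{\mathrm{an}})^*\bigl(g_{n-1} - \log|\varphi_{n-1}|\bigr) \;=\; d g_n - \log|\varphi| - \log|\varphi_n^d/\varphi| \;=\; d\bigl(g_n - \log|\varphi_n|\bigr),
\]
so that the inductive hypothesis plus division by $d>0$ gives $g_n - \log|\varphi_n| \geqslant 0$.

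The only point requiring care, and which I expect to be the main conceptual (rather than technical) obstacle, is the interpretation of the inequality $g_n - \log|\varphi_n| \geqslant 0$ on the whole of $X^{\mathrm{an}}$ rather than just on the locus where $\varphi_n$ is regular and nowhere vanishing: since $g_n - \log|\varphi_n|$ is a Green function of the (by what we have just shown) effective $\RR$-Cartier divisor $D + (\varphi_n)$, Proposition~\ref{Pro:e-gextension} and Remark~\ref{Rem:effective} allow one to view it as a map $X^{\mathrm{an}} \to \RR \cup \{+\infty\}$, and with this convention the inequality is preserved by pullback along $f^{\mathrm{an}}$. Once this bookkeeping is fixed, the induction runs without further difficulty.
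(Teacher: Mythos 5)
Your proposal is correct and follows essentially the same route as the paper: part~(1) is the same telescoping computation (you package it via the recursion $dh_n = \lambda + (f^{\mathrm{an}})^*(h_{n-1})$, while the paper expands the sums directly, but these are the same manipulation), and part~(2) is the induction the paper leaves implicit when it writes ``(2) follows from~(1).'' Your care in noting that $g_n - \log|\varphi_n|$ must be read as a map $X^{\mathrm{an}} \to \RR \cup \{+\infty\}$ via Proposition~\ref{Pro:e-gextension} and Remark~\ref{Rem:effective}, so that nonnegativity is well-posed and is preserved under composition with $f^{\mathrm{an}}$, is exactly the point the terse phrase in the paper glosses over.
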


\begin{proof}
(1) In the case $n=1$, since 
$(f^{\mathrm{an}})^*(g_0) = dg_0 - \log |\varphi| + \lambda$,
$g_1 = g_0 + (1/d)\lambda$ and $\varphi_1 = \varphi^{1/d}$,
the assertion is obvious.
For $n \geqslant 2$, 
{\allowdisplaybreaks
\begin{align*}
(f^{\mathrm{an}})^*(g_{n-1}) & = (f^{\mathrm{an}})^*\left(g_0 + \sum_{i=0}^{n-2} \frac{1}{d^{i+1}}((f^{\mathrm{an}})^i)^*(\lambda)\right) \\
& = (f^{\mathrm{an}})^*(g_0) + \sum_{i=0}^{n-2} \frac{1}{d^{i+1}}((f^{\mathrm{an}})^{i+1})^*(\lambda) \\
& = d g_0 - \log |\varphi| + \lambda + \sum_{i=0}^{n-2} \frac{1}{d^{i+1}}((f^{\mathrm{an}})^{i+1})^*(\lambda) \\
& = d\left( g_0 + \frac{1}{d} \lambda + \sum_{i=0}^{n-2} \frac{1}{d^{i+2}}((f^{\mathrm{an}})^{i+1})^*(\lambda)\right) - \log |\varphi| \\
& = d g_n  - \log |\varphi| \\
\intertext{and}
f^*(\varphi_{n-1}) & = f^*\left(\prod_{i=0}^{n-2} (f^i)^*(\varphi)^{1/d^{i+1}}\right) 
= \prod_{i=0}^{n-2} (f^{i+1})^*(\varphi)^{1/d^{i+1}} \\
& = \prod_{i=1}^{n-1} (f^{i})^*(\varphi)^{1/d^{i}} 
= \left( \prod_{i=1}^{n-1} (f^{i})^*(\varphi)^{1/d^{i+1}} \right)^{d} = (\varphi_n/\varphi^{1/d})^d
= \varphi_n^d/\varphi.
\end{align*}}
Therefore, the assertion follows inductively.

\medskip
(2) follows from (1).
\end{proof}

\begin{Proposition}
\label{prop:canonical:Dirichlet}
If $D + (s)$ is effective for some $s \in \Rat(X)^{\times}_{\RR}$, then, for any $\epsilon > 0$,
there is $\varphi_{\epsilon} \in \Rat(X)^{\times}_{\RR}$ such that
$D + (\varphi_{\epsilon}) \geqslant 0$ and $g - \log |\varphi_{\epsilon}| + \epsilon \geqslant 0$.
\end{Proposition}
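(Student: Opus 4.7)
The plan is to apply the recursive machinery of Lemma~\ref{lem:recursive:relations} not to $D$ itself, but to the effective $\RR$-Cartier divisor $D' := D + (s)$, and then translate the resulting approximation back along the change of variables of Lemma~\ref{lem:canonical:Green:principal}.

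First, by Lemma~\ref{lem:canonical:Green:principal} with $\theta = s$, one has $f^*(D') = dD' + (\varphi')$ where $\varphi' := f^*(s) s^{-d} \varphi \in \Rat(X)^{\times}_{\RR}$, and the canonical Green function of $D'$ with respect to $f$ is exactly $g' := g - \log|s|$. Since $D'\geqslant 0$, Proposition~\ref{Pro:e-gextension} supplies a uniform lower bound for any $D'$-Green function; adding a suitable real constant (which preserves the property of being a Green function of $D'$, as it alters $g'_0 + \log|f|$ only by a constant for any local equation $f$) yields an initial Green function $g'_0$ of $D'$ satisfying $g'_0 \geqslant 0$ on $X^{\mathrm{an}}$. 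Running the construction of \eqref{eqn:prop:unique:existence:canonical:Green:function:01} and Lemma~\ref{lem:uniform:conv} with this starting data produces $\RR$-rational functions
\[
\varphi'_n := \prod_{i=0}^{n-1} (f^i)^*(\varphi')^{1/d^{i+1}}
\]
together with a sequence $g'_n := g'_0 + h'_n$ of $D'$-Green functions which, by Proposition~\ref{prop:unique:existence:canonical:Green:function}, converges uniformly to $g'$. Lemma~\ref{lem:recursive:relations}(2), applied in this situation with $D'$ and $g'_0$ in place of $D$ and $g_0$, then gives $D' + (\varphi'_n) \geqslant 0$ and $g'_n - \log|\varphi'_n| \geqslant 0$ on $X^{\mathrm{an}}$ for every $n \geqslant 0$.

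Given $\epsilon > 0$, by Lemma~\ref{lem:uniform:conv} I choose $n$ so large that the continuous extension $\theta_n$ of $g' - g'_n$ satisfies $\|\theta_n\|_{\sup} \leqslant \epsilon$. Then
\[
g' - \log|\varphi'_n| + \epsilon \;=\; (g' - g'_n) + (g'_n - \log|\varphi'_n|) + \epsilon \;\geqslant\; -\epsilon + 0 + \epsilon \;=\; 0,
\]
and setting $\varphi_{\epsilon} := s \varphi'_n \in \Rat(X)^{\times}_{\RR}$ I obtain
\[
D + (\varphi_{\epsilon}) \;=\; D' + (\varphi'_n) \;\geqslant\; 0
\quad\text{and}\quad
g - \log|\varphi_{\epsilon}| + \epsilon \;=\; g' - \log|\varphi'_n| + \epsilon \;\geqslant\; 0,
\]
which is the desired conclusion. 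No serious obstacle is anticipated; the only delicate point is the legitimacy of choosing a \emph{non-negative} initial Green function $g'_0$ of $D'$ (required to invoke Lemma~\ref{lem:recursive:relations}(2)), and this follows immediately from Proposition~\ref{Pro:e-gextension} together with the invariance of the Green-function property under additive real constants.
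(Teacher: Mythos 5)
Your proof is correct and follows essentially the same route as the paper's: both hinge on Lemma~\ref{lem:canonical:Green:principal} to translate to the effective divisor $D+(s)$, choose a non-negative initial Green function via Propositions~\ref{prop:exist:Green} and~\ref{Pro:e-gextension}, and then invoke Lemmas~\ref{lem:uniform:conv} and~\ref{lem:recursive:relations} to approximate the canonical Green function by effective steps. The only difference is organizational — the paper treats the case $D\geqslant 0$ first and then conjugates by $s$, whereas you conjugate by $s$ at the outset — which is immaterial.
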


\begin{proof}
First we assume that $D \geqslant 0$.
By Proposition~\ref{prop:exist:Green} and Proposition~\ref{Pro:e-gextension},
we can choose a Green function $g_0$ of $D$ with $g_0 \geqslant 0$. Then, by Lemma~\ref{lem:uniform:conv} 
and Lemma~\ref{lem:recursive:relations},
for $\epsilon > 0$, there is a positive integer $n$ such that $\| h - h_n \|_{\sup} \leqslant \epsilon$,
$D + (\varphi_n) \geqslant 0$ and $g_n - \log |\varphi_n| \geqslant 0$, and hence
\[
0 \leqslant g_n - \log |\varphi_n| = g + (h_n - h) - \log |\varphi_n| \leqslant g + \epsilon - \log |\varphi_n|,
\]
as required.

Next we assume that $D + (s)$ is effective for some $s \in \Rat(X)^{\times}_{\RR}$.
Then, by Lemma~\ref{lem:canonical:Green:principal},
the canonical Green function of $D + (s)$ is
$g - \log |s|$. Therefore, by the previous observation,
for any $\epsilon > 0$, there is $\psi_{\epsilon} \in \Rat(X)^{\times}_{\RR}$ 
such that $D + (s) + (\psi_{\epsilon}) \geqslant 0$ and
$g - \log |s| - \log |\psi_{\epsilon}| + \epsilon \geqslant 0$, and hence
the assertion of the proposition follows for $\varphi_{\epsilon} := s \psi_{\epsilon}$.
\end{proof}

\begin{Proposition}
\label{prop:semiample:plurisubharmonic:endomorphism}
If $D$ is semiample, then the canonical Green function of $D$ is of plurisubharmonic type.
\end{Proposition}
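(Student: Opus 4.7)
The plan is to realize the canonical Green function $g$ as the uniform limit of a sequence of plurisubharmonic Green functions of $D$, and then invoke Proposition~\ref{prop:basic:prop:pluri:R}(4). The natural candidate is the sequence $\{g_n\}_{n \geqslant 0}$ introduced just before Lemma~\ref{lem:recursive:relations}, since Lemma~\ref{lem:uniform:conv} already shows that $g_n \to g$ uniformly. The only thing left to do is to arrange that the starting Green function is plurisubharmonic and that the recursion defining $g_n$ preserves plurisubharmonicity.

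First I would select a plurisubharmonic starting Green function $g_0$ of $D$. By hypothesis $D = a_1 A_1 + \cdots + a_r A_r$ with $A_i$ semiample Cartier divisors and $a_i \in \RR_{>0}$. For each $A_i$ a sufficiently divisible multiple is base-point-free, and pulling back the Fubini--Study-type Green function constructed in the proof of Proposition~\ref{prop:exist:Green} (whose associated metric is, by construction, a quotient of the trivial norm on $K^{N+1}$, hence semipositive) yields a plurisubharmonic Green function of $A_i$. A positive $\RR$-linear combination then gives a plurisubharmonic $g_0$ of $D$ by definition.

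Next I would prove by induction that every $g_n$ is plurisubharmonic of $D$. By Lemma~\ref{lem:recursive:relations}(1) the recursion can be rewritten as
\[
g_n = \frac{1}{d}\bigl((f^{\mathrm{an}})^*(g_{n-1}) + \log|\varphi|\bigr).
\]
Since $f$ is surjective we have $f(X) \not\subseteq \Supp(D)$, so Proposition~\ref{prop:basic:prop:pluri:R}(3) gives that $(f^{\mathrm{an}})^*(g_{n-1})$ is a plurisubharmonic Green function of $f^*(D) = dD + (\varphi)$. By Proposition~\ref{prop:basic:prop:pluri:R}(1), $\log|\varphi| = -\log|\varphi^{-1}|$ is a plurisubharmonic Green function of $(\varphi^{-1}) = -(\varphi)$. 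Their sum is therefore a plurisubharmonic Green function of $f^*(D) + (\varphi^{-1}) = dD$ by part~(2), and rescaling by $1/d \in \RR_{>0}$ shows that $g_n$ is plurisubharmonic of $D$. Proposition~\ref{prop:basic:prop:pluri:R}(4) applied to the uniformly convergent sequence $\{g_n\}$ then yields the claim.

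The main subtlety lies in the inductive step: plurisubharmonicity is not preserved under subtraction, so it is essential to rewrite the polarisation relation $f^*(D) = dD + (\varphi)$ in the form $dD = f^*(D) + (\varphi^{-1})$, thereby exhibiting the correction term $\log|\varphi|$ as the plurisubharmonic Green function of the principal divisor $(\varphi^{-1})$ rather than as the negative of one. This repackaging is what allows every step of the iteration to be built from pullbacks, positive scalings, and sums of plurisubharmonic objects, operations under which plurisubharmonicity is stable by Proposition~\ref{prop:basic:prop:pluri:R}.
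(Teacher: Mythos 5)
Your argument is correct and follows essentially the same route as the paper: choose a plurisubharmonic initial Green function $g_0$ (available because $D$ is semiample), show each iterate $g_n$ is plurisubharmonic by induction using the recursion $g_n = \tfrac{1}{d}\bigl((f^{\mathrm{an}})^*(g_{n-1})+\log|\varphi|\bigr)$ together with parts (1)--(3) of Proposition~\ref{prop:basic:prop:pluri:R}, and conclude by the stability under uniform limits (part~(4)). The only difference is that you unpack explicitly the repackaging $d D = f^*(D) + (\varphi^{-1})$ that makes the inductive step a sum and a positive scaling rather than a subtraction; the paper invokes the same items (1) and (2) without spelling this out.
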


\begin{proof}
Since $D$ is semiample, we can choose a Green function $g_0$ of plurisubharmonic type as
an initial Green function.
Note that $g$ is the uniform limit of the sequence $\{ g_n \}_{n\geqslant 1}$, so that,
by (4) in Proposition~\ref{prop:basic:prop:pluri:R}, 
it is sufficient to show that each $g_n$ is of plurisubharmonic type.
We prove it by induction on $n$.
We assume that $g_{n-1}$ is of plurisubharmonic type. Then,
by Lemma~\ref{lem:recursive:relations} and (3) in Proposition~\ref{prop:basic:prop:pluri:R}, 
$d g_n  - \log |\varphi|$ is of plurisubharmonic type.
Therefore, by (1) and (2) Proposition~\ref{prop:basic:prop:pluri:R},
$g_n$ is of plurisubharmonic type.
\end{proof}

\section{A sufficient condition for the Dirichlet property \\
of arithmetic dynamic system}

Throughout this section,
let $K$ be a number field and $X$
be a normal, projective and geometrically integral scheme over $\Spec K$.
In this section, we consider a sufficient condition to guarantee the Dirichlet property.
As a consequence, if $X = \PP^n_K$ and $f$ is an endomorphism given by polynomials, then
we can see that the Dirichlet property holds for the canonical compatification of
an ample Cartier divisor with respect to the endomorphism $f$.

\subsection{Preliminaries}
In this subsection, we discuss
several facts which are used in the later subsection.

\begin{Lemma}
\label{lemma:injective:principal}
The natural homomorphism $\Rat(X)^{\times}_{\RR} \to 
\aDiv_{\RR}(X)$  
given by $\varphi \mapsto \widehat{(\varphi)}$
is injective, that is, $\Rat(X)^{\times}_{\RR}$ can be considered as a vector subspace of 
$\aDiv_{\RR}(X)$. 
\end{Lemma}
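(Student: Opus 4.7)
The plan is to reduce the problem to the case of $\Spec K$, where injectivity is an immediate consequence of the Dirichlet unit theorem. Suppose $\varphi \in \Rat(X)^{\times}_{\RR}$ satisfies $\widehat{(\varphi)} = 0$ in $\aDiv_{\RR}(X)$. Unpacking the definitions, this amounts to two conditions: first, $(\varphi) = 0$ in $\Div_{\RR}(X)$; and second, $-\log|\varphi|_v = 0$ on $X_v^{\mathrm{an}}$ for every $v \in M_K$.

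The first step is to prove that $\varphi$ already lies in $K^{\times} \otimes_{\ZZ} \RR$. Since $X$ is projective and geometrically integral over $K$, one has $H^0(X, \OO_X) = K$, hence $H^0(X, \OO_X^{\times}) = K^{\times}$, so the kernel of the natural homomorphism $\Rat(X)^{\times} \to \Div(X)$ equals $K^{\times}$. Because $\RR$ is a flat $\ZZ$-module (being torsion-free over the PID $\ZZ$), this kernel identification persists after tensoring with $\RR$, and the hypothesis $(\varphi) = 0$ in $\Div_{\RR}(X)$ forces $\varphi \in K^{\times} \otimes_{\ZZ} \RR \subseteq \Rat(X)^{\times}_{\RR}$.

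For such a $\varphi$, which is a constant function on each $X_v^{\mathrm{an}}$, the condition $\log|\varphi|_v = 0$ at a finite place $v = \pp$ translates into $\ord_\pp(\varphi) = 0$, where $\ord_\pp$ denotes the $\RR$-linear extension of the normalised $\pp$-adic valuation. A second application of flatness, this time to the exact sequence $0 \to O_K^{\times} \to K^{\times} \to \bigoplus_{\pp \in M_K^{\mathrm{fin}}} \ZZ$ given by $\ord$, yields $\varphi \in O_K^{\times} \otimes_{\ZZ} \RR$. Finally, the Dirichlet unit theorem identifies $O_K^{\times} \otimes_{\ZZ} \RR$ with the trace-zero hyperplane of $\RR^{r_1 + r_2}$ via the logarithmic embedding at the infinite places, and the hypothesis $\log|\varphi|_\sigma = 0$ for every $\sigma \in M_K^{\infty}$ then sends $\varphi$ to zero under this isomorphism. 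Hence $\varphi = 0$ in $O_K^{\times} \otimes_{\ZZ} \RR$, and \emph{a fortiori} in $\Rat(X)^{\times}_{\RR}$.

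The recurring technical ingredient is the flatness of $\RR$ over $\ZZ$, which upgrades each $\ZZ$-linear injectivity statement to an $\RR$-linear one; I expect the main delicate point to be keeping track of these reductions cleanly rather than invoking anything substantial beyond the classical Dirichlet unit theorem at the end.
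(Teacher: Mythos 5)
Your proof is correct and follows essentially the same skeleton as the paper's: reduce the kernel to $K^\times_\RR$, then to $(O_K^\times)_\RR$, and finish with injectivity of the $\RR$-linearly extended logarithmic embedding at the infinite places. The only difference is technical bookkeeping — you invoke flatness of $\RR$ over $\ZZ$ applied to the exact sequences $0\to K^\times\to\Rat(X)^\times\to\Div(X)$ and $0\to O_K^\times\to K^\times\to\bigoplus_\pp\ZZ$ (using $H^0(X,\OO_X^\times)=K^\times$ from geometric integrality), while the paper achieves the same two reductions by writing $\varphi=\varphi_1^{a_1}\cdots\varphi_l^{a_l}$ with $a_1,\dots,a_l$ $\QQ$-linearly independent and deducing $\ord_\Gamma(\varphi_i)=0$, resp.\ $\ord_v(\varphi_i)=0$, term by term; these are two phrasings of the same underlying mechanism.
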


\begin{proof}
We denote the homomorphism $\Rat(X)^{\times}_{\RR} \to 
\aDiv_{\RR}(X)$ 
by $\alpha$.
Let $\varphi \in \Ker(\alpha)$. We set $\varphi = \varphi_1^{a_1} \cdots \varphi_l^{a_l}$
such that $\varphi_1, \ldots, \varphi_l \in \Rat(X)^{\times}$,
$a_1, \ldots, a_l \in \RR$ and $a_1, \ldots, a_l$ are linearly independent over $\QQ$.
As $a_1(\varphi) + \cdots + a_l (\varphi_l)$ vanishes in $\widehat{\mathrm{Div}}_{\mathbb R}(X)$, for any prime divisor $\Gamma$ on $X$,
\[
a_1 \ord_{\Gamma}(\varphi_1) + \cdots + a_l \ord_{\Gamma}(\varphi_l) = 0,
\]
which implies $\ord_{\Gamma}(\varphi_i) = 0$ for all $i$ because
$a_1, \ldots, a_l$ are linearly independent over $\QQ$.
Thus $\varphi_i \in K^{\times}$ for all $i$, by which we may assume that $X = \Spec(K)$. 
For $v \in M_K^{\mathrm{fin}}$, as before,
\[
a_1 \ord_{v}(\varphi_1) + \cdots + a_l \ord_{v}(\varphi_l) = 0,
\]
so that $\varphi_i \in O_K^{\times}$ for all $i$.
Therefore, $\Ker(\alpha) \subseteq (O_K^{\times})_{\RR}$.
Let us consider the homomorphism $L : O_K^{\times} \to \RR^{K(\CC)}$
given by $x \mapsto (-\log|x|)$, where $(-\log|x|)_{\sigma} = -\log|\sigma(x)|$.
It is well known that $\Ker(L)$ is a finite group, so that
the natural extension $L_{\RR} : (O_K^{\times})_{\RR} \to \RR^{K(\CC)}$ is injective.
Therefore we have the assertion.
\end{proof}

\begin{Lemma}
\label{lemma:limit:effective}
Let $H$ be a finite dimensional vector subspace of
$\aDiv_{\RR}(X)$ 
over $\RR$.
Let $\{ \overline{D}_n \}_{n\in\mathbb N}$ be a sequence in $H$.
Moreover, 
let $\{(0, \theta_{n}) \}_{n\in\mathbb N}$ be a sequence in 
$\aDiv_{\RR}(X)$, 
that is,
for each $v \in M_K$, $\{ \theta_{n,v} \}_{n\in\mathbb N}$ is a sequence of continuous functions on $X_v^{\mathrm{an}}$.
We assume the following:
\begin{enumerate}
\renewcommand{\labelenumi}{\rom{(\arabic{enumi})}}
\item $\{ \overline{D}_n \}_{n\in\mathbb N}$ has a limit
$\overline{D}$ in the natural topology of $H$ as a finite dimensional vector space over $\RR$.

\item
For each $v \in \Sigma$, $\{\theta_{n,v} \}_{n\in\mathbb N}$ converges to $0$ uniformly.
\end{enumerate}
If $\overline{D}_n + \left(0, \theta_n \right)\geqslant 0$ for all $n$, then $\overline{D} \geqslant 0$.
\end{Lemma}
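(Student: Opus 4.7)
The plan is to separately verify the two components of effectivity of $\overline D = (D,g)$: $\RR$-effectivity of the underlying divisor $D$, and nonnegativity of each Green function $g_v$. Fix a basis $\overline E_1, \ldots, \overline E_r$ of the finite-dimensional subspace $H$ and write
\[
\overline D_n = \sum_{i=1}^r a_{n,i}\,\overline E_i, \qquad \overline D = \sum_{i=1}^r a_i\,\overline E_i,
\]
so that by hypothesis (i) one has $a_{n,i} \to a_i$ for each $i$. Denote by $E_i$ the underlying $\RR$-Cartier divisor of $\overline E_i$; then in the finite-dimensional subspace $V \subseteq \Div_\RR(X)$ spanned by $E_1, \ldots, E_r$ we have $D_n = \sum_i a_{n,i} E_i \to D = \sum_i a_i E_i$. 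Since $X$ is normal, effectivity inside $V$ is detected by the finitely many linear functionals $\mathrm{mult}_\Gamma(\cdot)$ for $\Gamma$ running over the primes in $\bigcup_i\Supp(E_i)$, so the $\RR$-effective cone in $V$ is a closed polyhedral cone. Passing to the limit in $D_n \geqslant 0$ therefore gives $D \geqslant 0$.

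Fix now a place $v \in M_K$; it remains to show $g_v \geqslant 0$, equivalently $\mathrm{e}^{-g_v} \leqslant 1$ on $X_v^{\mathrm{an}}$. By the previous step together with Proposition~\ref{Pro:e-gextension}, $\mathrm{e}^{-g_v}$ extends to a continuous nonnegative function on $X_v^{\mathrm{an}}$; by the effectivity hypothesis on $\overline D_n + (0,\theta_n)$, the functions $\mathrm{e}^{-(g_{n,v}+\theta_{n,v})}$ extend continuously with values in $[0,1]$. Let $U_v := X_v^{\mathrm{an}} \setminus \bigcup_{i=1}^r \Supp(E_i)^{\mathrm{an}}$, which is open dense in $X_v^{\mathrm{an}}$ as $X$ is integral. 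On $U_v$ each $g_{E_i,v}$ is finite and continuous, and hypothesis (ii) combined with $a_{n,i}\to a_i$ gives
\[
(g_{n,v}+\theta_{n,v})(\xi) - g_v(\xi) \;=\; \sum_{i=1}^{r}(a_{n,i}-a_i)\,g_{E_i,v}(\xi) + \theta_{n,v}(\xi) \;\longrightarrow\; 0
\]
pointwise for $\xi \in U_v$. Exponentiating yields $\mathrm{e}^{-(g_{n,v}+\theta_{n,v})}(\xi) \to \mathrm{e}^{-g_v}(\xi)$ on $U_v$, and since each term is bounded by $1$, so is the limit. The continuity of $\mathrm{e}^{-g_v}$ on $X_v^{\mathrm{an}}$ and the density of $U_v$ then force $\mathrm{e}^{-g_v} \leqslant 1$ everywhere, hence $g_v \geqslant 0$. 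As $v$ was arbitrary, $\overline D \geqslant 0$.

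The main obstacle is that the Green functions $g_{n,v}$ are singular along the moving loci $\Supp(D_n)$, so they do not converge uniformly to $g_v$ on $X_v^{\mathrm{an}}$. The device of passing to $\mathrm{e}^{-g}$ recasts everything in terms of uniformly bounded continuous functions; once this is done, pointwise convergence on the fixed dense subset $U_v$ (which depends only on the basis $E_1,\ldots,E_r$, not on $n$) together with continuity of the limit suffices to propagate the bound $\mathrm{e}^{-g_v}\leqslant 1$ to the entire analytic space.
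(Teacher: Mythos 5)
Your proof is correct and follows essentially the same strategy as the paper's: decompose $\overline D_n$ in a basis of $H$, conclude $D\geqslant 0$ by passing to the limit of $\ord_\Gamma(\cdot)$ on the finitely many relevant primes, and establish non-negativity of the Green functions by pointwise convergence on the dense open set avoiding $\bigcup_i\Supp(E_i)^{\mathrm{an}}$. The paper phrases the Green-function step as a proof by contradiction with a test point in an open set where $g_v<0$, whereas you recast it via $\mathrm{e}^{-g_v}$ and density of $U_v$; these are the same idea in slightly different clothing.
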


\begin{proof}
Let $\overline{H}_1 = (H_1, h_1), \ldots, \overline{H}_r = (H_r, h_r)$ be a basis of $H$.
We set 
\[
\overline{D}_n = a_{n1}\overline{H}_1 + \cdots + a_{nr}\overline{H}_r
\]
and
$a_{i} = \lim_{n\to\infty} a_{ni}$ for $i=1, \ldots, r$.
Note that $\overline{D} = a_1\overline{H}_1 + \cdots + a_{r} \overline{H}_r$.
Let $\Gamma$ be a prime divisor on $X$. Then
\begin{align*}
\ord_{\Gamma}(D) & = a_1 \ord_{\Gamma}(H_1) + \cdots + a_r \ord_{\Gamma}(H_r) \\
& =
\lim_{n\to\infty} \{ a_{n1} \ord_{\Gamma}(H_1) + \cdots + a_{nr} \ord_{\Gamma}(H_r) \} \\
& = \lim_{n\to\infty} \ord_{\Gamma}(D_n)
\geqslant 0,
\end{align*}
so that $D \geqslant 0$.

Here
we set
\[
g_n := a_{n1} h_1 + \cdots + a_{nr} h_r + \theta_{n}\quad\text{and}\quad
g := a_{1} h_1 + \cdots + a_{r} h_r.
\]
For each $v \in M_K (= M_K^{\mathrm{fin}} \cup M_K^{\infty})$,
we need to show that $g_v \geqslant 0$ under the assumption $g_{n, v} \geqslant 0$ for all $n$.
Note that $g_v$ is continuous on $X^{\mathrm{an}}_v\setminus \Supp(D)_v^{\mathrm{an}}$ and
$g_v(x) = \infty$ for $x \in \Supp(D)_v^{\mathrm{an}}$.
We assume that the non-negativity of $g_v$ does not hold.
Then there is an open set $U$ of $X^{\mathrm{an}}_v\setminus \Supp(D)_v^{\mathrm{an}}$ such that
$g_v < 0$ on $U$.
Choose $x \in U \setminus \left(\bigcup_{i=1}^r \Supp(H_i)\right)_v^{\mathrm{an}}$. Then 
\[
g_{n, v}(x) = \theta_{n,v}(x) + \sum_{i=1}^r a_{ni} h_{i, v}(x) \geqslant 0,
\]
which implies
\begin{align*}
g_v(x) & = \sum_{i=1}^r a_{i} h_{i, v}(x) \\
& = \lim_{n\to\infty} \theta_{n,v}(x) + \sum_{i=1}^r \left( \lim_{n\to\infty} a_{ni} \right) h_{i,v}(x) \\
& = \lim_{n\to\infty}
\left( \theta_{n,v}(x) + \sum_{i=1}^r a_{ni} h_{i,v}(x) \right) \geqslant 0.
\end{align*}
This is a contradiction.
\end{proof}

\begin{Lemma}
\label{lemma:convex:compact}
Let $\overline{D}$ be an adelic arithmetic $\RR$-Cartier divisor of $C^0$-type.
Then we have the following:
\begin{enumerate}
\item
$\hat{\Gamma}(X, \overline{D})^{\times}_{\RR}$ is a convex set
(for the definition of $\hat{\Gamma}(X, \overline{D})^{\times}_{\RR}$, see
Conventions and terminology~\ref{CT:adelic:R:Cartier:div}).

\item
Let $H$ be a finite dimensional vector subspace of $\Rat(X)^{\times}_{\RR}$.
Then $H \cap \hat{\Gamma}(X, \overline{D})^{\times}_{\RR}$ is compact.
\end{enumerate}
\end{Lemma}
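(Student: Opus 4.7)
For part (1), convexity is immediate from the $\mathbb R$-linearity of the map $s\mapsto\widehat{(s)}$ combined with the fact that the set of effective adelic arithmetic $\mathbb R$-Cartier divisors forms a convex cone: for $s,s'\in\hat{\Gamma}(X,\overline D)^{\times}_{\mathbb R}$ and $\lambda\in[0,1]$, one has
\[
\overline D+\widehat{(\lambda s+(1-\lambda)s')}=\lambda(\overline D+\widehat{(s)})+(1-\lambda)(\overline D+\widehat{(s')})\geqslant 0.
\]

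For part (2), since $H$ is a finite-dimensional normed $\mathbb R$-vector space, I will prove separately that $H\cap\hat{\Gamma}(X,\overline D)^{\times}_{\mathbb R}$ is closed and bounded. Lemma \ref{lemma:injective:principal} ensures that $\widehat{(H)}$ is a finite-dimensional subspace of $\aDiv_{\mathbb R}(X)$ of the same dimension as $H$. Closedness then falls directly out of Lemma \ref{lemma:limit:effective}: given $s_n\to s_\infty$ in $H$ with $s_n\in\hat{\Gamma}(X,\overline D)^{\times}_{\mathbb R}$, the sequence $\overline D_n:=\overline D+\widehat{(s_n)}$ lies in the finite-dimensional affine subspace $\overline D+\widehat{(H)}$ and converges to $\overline D+\widehat{(s_\infty)}$, so taking $\theta_n=0$ in Lemma \ref{lemma:limit:effective} gives $\overline D+\widehat{(s_\infty)}\geqslant 0$, hence $s_\infty\in H\cap\hat{\Gamma}(X,\overline D)^{\times}_{\mathbb R}$.

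For boundedness, I will argue by contradiction. Fix a norm $\|\cdot\|$ on $H$ and assume there is a sequence $\{t_n\}\subset H\cap\hat{\Gamma}(X,\overline D)^{\times}_{\mathbb R}$ with $c_n:=\|t_n\|\to+\infty$. After extracting a subsequence, the unit vectors $u_n:=c_n^{-1}t_n$ converge to some $u_\infty\in H$ with $\|u_\infty\|=1$. Dividing the effectivity $\overline D+\widehat{(t_n)}\geqslant 0$ by $c_n>0$ yields $c_n^{-1}\overline D+\widehat{(u_n)}\geqslant 0$, a sequence in the finite-dimensional subspace $\mathbb R\overline D+\widehat{(H)}$ that converges to $\widehat{(u_\infty)}$; applying Lemma \ref{lemma:limit:effective} (again with $\theta_n=0$) gives $\widehat{(u_\infty)}\geqslant 0$. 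The crux of the proof is then to upgrade Lemma \ref{lemma:injective:principal} from the equality $\widehat{(u)}=0$ to the inequality $\widehat{(u)}\geqslant 0$, that is, to show $\widehat{(u)}\geqslant 0$ forces $u$ to be the identity element of $\mathrm{Rat}(X)^{\times}_{\mathbb R}$, contradicting $\|u_\infty\|=1$.

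The main obstacle lies in this upgrade, which I will handle as follows. First, $(u)\geqslant 0$ is an effective principal $\mathbb R$-divisor on the projective variety $X$; intersecting with $A^{\dim X-1}$ for any ample divisor $A$ yields $\deg_A((u))=0$, and effectivity then forces $(u)=0$ in $\mathrm{Div}_{\mathbb R}(X)$. By flatness of $-\otimes_{\mathbb Z}\mathbb R$ and the equality $H^0(X,\mathcal O_X^{\times})=K^{\times}$, this puts $u$ in $(K^{\times})_{\mathbb R}$. The Green-function conditions now read $|u|_v\leqslant 1$ for every $v\in M_K$, and the $\mathbb R$-linear extension of the product formula $\sum_v n_v\log|u|_v=0$ (with each summand non-positive) forces $|u|_v=1$ at every place. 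Writing $u=c_1^{b_1}\cdots c_\ell^{b_\ell}$ with $c_i\in K^{\times}$ and $b_1,\ldots,b_\ell\in\mathbb R$ linearly independent over $\mathbb Q$, the equality at each finite place combined with $\mathbb Q$-linear independence forces $\mathrm{ord}_v(c_i)=0$ for all $i$ and all finite $v$, so $c_i\in O_K^{\times}$; the equalities at archimedean places then place $u$ in the kernel of the injective map $L_{\mathbb R}:(O_K^{\times})_{\mathbb R}\to\mathbb R^{K(\mathbb C)}$ of Lemma \ref{lemma:injective:principal}, giving $u=0$ in $H$. This contradiction completes the proof of boundedness, and hence of compactness.
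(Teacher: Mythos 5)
Your part (1) is the paper's argument verbatim. For part (2) your proof is correct but takes a genuinely different, more self-contained route. The paper proceeds by choosing an arithmetic model $(\XXX,\DDD)$ together with an $F_\infty$-invariant Green function $k$ so that the associated adelic divisor dominates $\overline D$, then cites \cite[Corollary~3.3.2]{MoD} (via Lemma~\ref{lemma:injective:principal}) for compactness of the enlarged set $H\cap\hat\Gamma(X,(\DDD,k)^a)^\times_\RR$, and finally uses Lemma~\ref{lemma:limit:effective} only to show the smaller set is closed inside it. You instead prove closedness and boundedness from scratch: you normalize a putatively unbounded sequence to the unit sphere of $H$, pass to a limit $u_\infty$, feed the rescaled effectivities $c_n^{-1}\overline D+\widehat{(u_n)}\geqslant 0$ back into Lemma~\ref{lemma:limit:effective} to get $\widehat{(u_\infty)}\geqslant 0$, and then upgrade the injectivity argument of Lemma~\ref{lemma:injective:principal} to the rigidity statement ``$\widehat{(u)}\geqslant 0$ implies $u=0$'' by the ampleness/degree argument, $H^0(X,\OO_X^\times)=K^\times$, and the product formula. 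What this buys is a proof that never leaves the present paper: no arithmetic model, no appeal to \cite{MoD}. The cost is length and the need to check a few extra facts ($\RR$-linear product formula, $(u)=0\Rightarrow u\in(K^\times)_\RR$ via flatness and geometric integrality), all of which you handle correctly. One cosmetic slip: in the closedness paragraph you place $\overline D_n=\overline D+\widehat{(s_n)}$ in the \emph{affine} subspace $\overline D+\widehat{(H)}$, whereas Lemma~\ref{lemma:limit:effective} is stated for a finite-dimensional \emph{vector} subspace; replace it with $\RR\overline D+\widehat{(H)}$, exactly as you do in the boundedness paragraph.
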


\begin{proof}
(1) For $t \in [0, 1]$ and $s , s'\in \hat{\Gamma}(X, \overline{D})^{\times}_{\RR}$,
\begin{align*}
\overline{D} + \widehat{(s^{t} {s'}^{(1-t)})} & = \overline{D} + t\widehat{(s)}+  (1-t) \widehat{(s')} \\
& =
t(\overline{D} + \widehat{(s)}) + (1-t)(\overline{D} + \widehat{(s')}) \geqslant 0,
\end{align*}
so that $\hat{\Gamma}(X, \overline{D})^{\times}_{\RR}$ is convex.

\medskip
(2)
We can find a model $(\XXX, \DDD)$ of $(X, D)$ and an $F_{\infty}$-invariant $D$-Green function $k$ of
$C^0$-type on $X(\CC) = \bigcup_{\sigma \in K(\CC)} X_{\sigma}^{\mathrm{a}}$ such that
$(\DDD, k)^{a} \geqslant \overline{D}$, where
$(\DDD, k)^{a}$ is the associated adelic $\RR$-Cartier divisor of $C^0$-type on $X$. 
Then 
\[
\hat{\Gamma}(X, (\DDD, k)^a)^{\times}_{\RR} \supseteq \hat{\Gamma}(X, \overline{D})^{\times}_{\RR}.
\]
By Lemma~\ref{lemma:injective:principal} together with \cite[Corollary~3.3.2]{MoD},
$H \cap \hat{\Gamma}(X, (\DDD, k)^a)^{\times}_{\RR}$ is compact.
Moreover, by Lemma~\ref{lemma:limit:effective}, we can see that $H \cap \hat{\Gamma}(X, \overline{D})^{\times}_{\RR}$
is closed, so that $H \cap \hat{\Gamma}(X, \overline{D})^{\times}_{\RR}$ is compact.
\end{proof}

\subsection{Algebraic dynamic system and a sufficient condition for the Dirichlet property}
\label{subsec:dynamic:system}

Let $f : X \to X$ be a surjective endomorphism of $X$ over $K$.
Let $D$ be an $\RR$-Cartier divisor on $X$.
We assume that there are a real number $d$ and $\varphi \in \Rat(X)^{\times}_{\RR}$ such that
$d > 1$ and $f^*(D) = dD + (\varphi)$.
An adelic arithmetic $\RR$-Cartier divisor $\overline{D} = (D, g)$ of $C^0$-type
is called the \emph{canonical compactification} of $D$ if
$f^*(\overline{D}) = d\overline{D} + \widehat{(\varphi)}$.
Note that $\overline{D}$ is uniquely determined by the equation
$f^*(\overline{D}) = d\overline{D} + \widehat{(\varphi)}$
(for details, see \cite[Section~3]{DsysDirichlet} or Proposition~\ref{prop:unique:existence:canonical:Green:function}).
By Lemma~\ref{lem:canonical:Green:principal},
for $\theta \in \Rat(X)^{\times}_{\RR}$, we have the following:
\begin{enumerate}
\renewcommand{\labelenumi}{(\roman{enumi})}
\item
$f^*(D + (\theta)) = d(D + (\theta)) + \left(f^*(\theta)\theta^{-d}\varphi\right)$.

\item
The canonical compactification of $D + (\theta)$ is given by
$\overline{D} + \widehat{(\theta)}$.
\end{enumerate}
Let $g_0 = \{ g_{0, v} \}_{v \in M_K}$ be a family of $D$-Green function of $C^0$-type 
on $X$.
We choose a collection of continuous functions $\lambda = \{ \lambda_v \}_{v \in M_K}$ such that
\begin{equation}\label{pullback of divisor}
f^*(D, g_0) = d (D, g_0) + \widehat{(\varphi)} + (0, \lambda).
\end{equation}
As in Subsection~\ref{subsec:canonical:green:function}, we set
\[
\begin{cases}
{\displaystyle g_n := g_0 + \sum_{i=0}^{n-1} \frac{1}{d^{i+1}}(f^i)^*(\lambda)} &  (n \geqslant 1)\\[1.5ex]
{\displaystyle \varphi_0 = 1, \quad \varphi_n = \prod_{i=0}^{n-1} (f^i)^*(\varphi)^{1/d^{i+1}}} &  (n \geqslant 1).
\end{cases}
\]
By Lemma~\ref{lem:uniform:conv},
\[
h_n = \sum_{i=0}^{n-1} \frac{1}{d^{i+1}}(f^i)^*(\lambda)\quad(n \geqslant 1),
\]
converges to a continuous function $h$ uniformly.
Here we set $g = g_0 + h$.
Then the pair $\overline{D} = (D, g)$ yields
the canonical compactification of $D$ (for details, see \cite[Section~3]{DsysDirichlet} or Proposition~\ref{prop:unique:existence:canonical:Green:function}).
Note that $g$ does not depend on the choice of the initial Green function $g_0$.
By Lemma~\ref{lem:recursive:relations}, we have the following:
\begin{enumerate}
\renewcommand{\labelenumi}{(\alph{enumi})}
\item $f^*(g_{n-1}) = d g_n  - \log |\varphi|^2$ and
$f^*(\varphi_{n-1}) = \varphi_n^d/\varphi$ for all $n \geqslant 1$.
In particular,
$f^*\left((D, g_{n-1}) + \widehat{(\varphi_{n-1})}\right) = d\left((D, g_{n}) + \widehat{(\varphi_n)}\right)$
for all $n \geqslant 1$.

\item
If $(D, g_0) \geqslant 0$, then $(D, g_{n}) + \widehat{(\varphi_n)} \geqslant 0$ for all $n \geqslant 0$.
\end{enumerate}
The vector subspace of $\Rat(X)^{\times}_{\RR}$ generated by $\{ \varphi_n \}_{n=1}^{\infty}$ 
is denoted by $V(\varphi)$.
We say that $V(\varphi)$ \emph{has the finiteness property} if 
$V(\varphi)$ is finitely generated as a vector space over $\mathbb R$.

\begin{Lemma}
The following are equivalent:
\begin{enumerate}
\renewcommand{\labelenumi}{\rom{(\arabic{enumi})}}
\item
$V(\varphi)$ has the finiteness property.

\item
There are $\RR$-rational functions $\phi_1, \ldots, \phi_l$ on $X$ and
$A, A_1, \ldots, A_l \in \RR^l$ such that
$\varphi = \phi^{A}$ and
$f^*(\phi_i) = \phi^{A_i}$ for $i=1, \ldots, l$ (see Conventions and terminology~\ref{CT:tensor:R}).
\end{enumerate}
\end{Lemma}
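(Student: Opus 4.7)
The plan is to prove the two implications separately, after observing that the multiplicative group $V(\varphi)$ coincides with the $\mathbb R$-vector subspace $W$ of $\mathrm{Rat}(X)^{\times}_{\mathbb R}$ generated by the iterates $\{(f^i)^*(\varphi)\}_{i\geqslant 0}$, which is visibly $f^*$-stable and contains $\varphi$. Granted this identification, the equivalence becomes an elementary statement about finite-dimensional $f^*$-invariant subspaces.

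For the implication (2) $\Longrightarrow$ (1), I would let $W_0$ be the $\mathbb R$-span of $\phi_1,\ldots,\phi_l$ in $\mathrm{Rat}(X)^{\times}_{\mathbb R}$. The relations $\varphi = \phi^A$ and $f^*(\phi_i) = \phi^{A_i}$ say exactly that $\varphi \in W_0$ and $f^*(W_0) \subseteq W_0$. By induction each $(f^i)^*(\varphi)$ lies in $W_0$, so each $\varphi_n = \prod_{i=0}^{n-1}(f^i)^*(\varphi)^{1/d^{i+1}}$ lies in $W_0$ as well (using that $W_0$ is closed under real powers, i.e.\ under the $\mathbb R$-scalar action on the additive vector space $\mathrm{Rat}(X)^{\times}_{\mathbb R}$). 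Hence $V(\varphi)\subseteq W_0$ is finite dimensional.

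For the harder direction (1) $\Longrightarrow$ (2), I would first verify the identification $V(\varphi) = W$. The inclusion $V(\varphi)\subseteq W$ is immediate from the defining formula for $\varphi_n$. Conversely, the telescoping identity
\[
\varphi_{n+1}\varphi_n^{-1} \;=\; (f^n)^*(\varphi)^{1/d^{n+1}},
\]
which I would verify directly from the definition of $\varphi_n$, yields
\[
(f^n)^*(\varphi) \;=\; \bigl(\varphi_{n+1}\varphi_n^{-1}\bigr)^{d^{n+1}} \;\in\; V(\varphi)
\]
for every $n\geqslant 0$, giving $W\subseteq V(\varphi)$ and hence $V(\varphi) = W$. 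Assuming (1), the subspace $W$ is finite dimensional, so it admits a finite basis $\phi_1,\ldots,\phi_l$. The containment $\varphi\in W$ then yields $A\in\mathbb R^l$ with $\varphi = \phi^A$, and the stability $f^*(W)\subseteq W$ yields $A_i\in\mathbb R^l$ with $f^*(\phi_i)=\phi^{A_i}$ for $i=1,\ldots,l$, establishing (2).

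There is no substantive analytic or geometric obstacle: the entire argument is a linear-algebra manipulation inside the $\mathbb R$-vector space $\mathrm{Rat}(X)^{\times}_{\mathbb R}$. The only point deserving a moment of care is the telescoping computation identifying $V(\varphi)$ with $W$, which must be carried out multiplicatively while keeping track of the $1/d^{i+1}$ weights; once that is in hand the conclusion is formal.
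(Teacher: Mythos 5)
Your proof is correct and follows essentially the same elementary linear-algebra argument as the paper's. The key computation in both is the same relation in disguise: your telescoping identity $\varphi_{n+1}\varphi_n^{-1}=(f^n)^*(\varphi)^{1/d^{n+1}}$ is equivalent to the paper's $f^*(\varphi_{n-1})=\varphi_n^d/\varphi$, and both approaches reduce to the observation that the span of the $\varphi_n$ is the smallest $f^*$-stable subspace containing $\varphi$. Your intermediate step identifying $V(\varphi)$ with $W=\mathrm{Span}\{(f^i)^*(\varphi)\}_{i\geqslant 0}$ is a slightly cleaner way to make the $f^*$-stability manifest, while the paper verifies $f^*(\vartheta_i)\in V(\varphi)$ by a direct expansion on a basis; the substance is identical.
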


\begin{proof}
(1) $\Longrightarrow$ (2): Let $\{ \vartheta_1, \ldots, \vartheta_n \}$ be a basis of $V(\phi)$.
Clearly $\varphi = \vartheta^{B}$ for some $B \in \RR^n$.
For each $i$, we can find $c_1, \ldots, c_r \in \RR$ such that
$\vartheta_i = \varphi_{n_1}^{c_1} \cdots \varphi_{n_r}^{c_r}$. Thus, as
\[
f^*(\vartheta_i) = f^*(\varphi_{n_1})^{c_1} \cdots f^*(\varphi_{n_r})^{c_r}
= (\varphi_{n_1 + 1}^{dc_1}/\varphi^{c_1}) \cdots (\varphi_{n_r + 1}^{dc_r}/\varphi^{c_r}),
\]
we can find $B_i \in \RR^n$ such that $f^*(\vartheta_i) = \vartheta^{B_i}$.

\medskip
(2) $\Longrightarrow$ (1): 
Clearly $V(\varphi)$ is a vector subspace of the vector space generated by $\phi_1, \ldots, \phi_l$,
so that the assertion  follows.
\end{proof}

\begin{Lemma}
\label{lemma:compact}
We assume that $(D, g_0) \geqslant 0$ and 
$V(\varphi)$ has the finiteness property.
Then there is a subsequence $\{ \varphi_{n_i} \}_{i\in\mathbb N}$ of $\{ \varphi_n \}_{n\geqslant 1}$ such that
the limit of $\{ \varphi_{n_i} \}_{i\in\mathbb N}$ exists in the usual topology of $V(\varphi)$
as a finite dimensional vector space over $\RR$.
\end{Lemma}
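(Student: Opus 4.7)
The plan is to exploit the finite dimensionality of $V(\varphi)$ and extract a convergent subsequence by a Bolzano--Weierstrass argument, the point being to show that the sequence $\{\varphi_n\}$ is eventually trapped in a compact subset of $V(\varphi)$. The crucial input is property (b) recalled in this subsection: under the hypothesis $(D, g_0) \geqslant 0$, one has $(D, g_n) + \widehat{(\varphi_n)} \geqslant 0$ for every $n \geqslant 0$.

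First I would show that only finitely many places contribute to the difference $g - g_n$. Indeed, by the definition of an adelic arithmetic $\RR$-Cartier divisor, $g_{0,v}$ is induced by a model of $(X,D)$ for all but finitely many $v \in M_K$, and the morphism $f$ extends to this model; hence equation \eqref{pullback of divisor} forces $\lambda_v = 0$ for all but finitely many $v$. Consequently $h_{n,v} = h_v = 0$ and $g_{n,v} = g_v$ for all $v$ outside a fixed finite subset $S \subseteq M_K$. On the finitely many places of $S$, Lemma \ref{lem:uniform:conv} gives $\|h_{n,v} - h_v\|_{\sup} \to 0$.

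Next, fix any $\epsilon > 0$ and choose $N_0$ large enough that, for every $n \geqslant N_0$ and every $v \in S$, one has $g_{n,v} \leqslant g_v + \epsilon$; for $v \notin S$ this holds trivially. Let $\chi = \{\chi_v\}_{v \in M_K}$ be the collection with $\chi_v = \epsilon$ for $v \in S$ and $\chi_v = 0$ otherwise: this is a collection of continuous functions supported on finitely many places, so $\overline{D}' := \overline{D} + (0,\chi)$ is again an adelic arithmetic $\RR$-Cartier divisor of $C^0$-type. From $(D, g_n) + \widehat{(\varphi_n)} \geqslant 0$ and $g + \chi \geqslant g_n$ (place by place) we deduce
\[
\overline{D}' + \widehat{(\varphi_n)} \geqslant (D, g_n) + \widehat{(\varphi_n)} \geqslant 0
\]
for every $n \geqslant N_0$, that is, $\varphi_n \in \hat{\Gamma}(X, \overline{D}')^{\times}_{\RR}$. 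Since by construction $\varphi_n \in V(\varphi)$ for every $n$, we have $\varphi_n \in V(\varphi) \cap \hat{\Gamma}(X, \overline{D}')^{\times}_{\RR}$ for all $n \geqslant N_0$.

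Finally, by the finiteness property, $V(\varphi)$ is a finite-dimensional vector subspace of $\Rat(X)^{\times}_{\RR}$, so assertion (2) of Lemma \ref{lemma:convex:compact} applies with $H = V(\varphi)$ and $\overline{D}$ replaced by $\overline{D}'$: the set $V(\varphi) \cap \hat{\Gamma}(X, \overline{D}')^{\times}_{\RR}$ is compact. Therefore $\{\varphi_n\}_{n \geqslant N_0}$ lies in a compact subset of a finite dimensional real vector space, and a convergent subsequence exists, as required. The only subtle point is the passage from the place-by-place uniform convergence $h_{n,v} \to h_v$ to the construction of a single auxiliary adelic divisor $\overline{D}'$ dominating all $(D, g_n)$ simultaneously; this is exactly what the finiteness of the exceptional set $S$ makes possible.
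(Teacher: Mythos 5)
Your argument is correct and follows essentially the same route as the paper: identify the finite set of places where $\lambda_v\neq 0$, dominate the tail of the sequence $\{(D,g_n)\}$ by a single auxiliary adelic divisor that modifies $\overline D$ only at those finitely many places, and then invoke the compactness assertion (Lemma~\ref{lemma:convex:compact}(2)) with $H=V(\varphi)$. The only cosmetic difference is that the paper uses one constant $c$ bounding $(h_n)_v - h_v$ for \emph{all} $n$, whereas you use an $\epsilon$–$N_0$ bound on the tail; both follow from the uniform convergence of Lemma~\ref{lem:uniform:conv} and yield the same conclusion.
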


\begin{proof}
First of all, note that there is a non-empty open set $U$ of $\Spec(O_K)$ such that
$\lambda_{\mathfrak p} = 0$ for all $\mathfrak p \in U$, where $\lambda_{\mathfrak p}$ is determined by \eqref{pullback of divisor}. Moreover,
there is a positive number $c$ such that $(h_n)_v \leqslant h_v + c$ for all $v \in M_K \setminus (U \cap M_K^{\mathrm{fin}})$.
Thus, if we set 
\[
\overline{D}' = (D, g) + \left(0, \sum_{v \in M_K \setminus (U \cap M_K^{\mathrm{fin}})} c [v]\right),
\]
then, by Lemma~\ref{lem:recursive:relations},
\[
\overline{D}' + \widehat{(\varphi_n)} \geqslant (D, g_n) + \widehat{(\varphi_n)} \geqslant 0.
\]
Thus, $\varphi_n \in \hat{\Gamma}(X, \overline{D})^{\times}_{\RR} \cap V(\varphi)$,
so that the assertion follows from Lemma~\ref{lemma:convex:compact}.
\end{proof}

\begin{Theorem}
\label{thm:Dirichlet:prop}
If $D$ is effective and $V(\varphi)$ has the finiteness property,
then the Dirichlet property holds.
\end{Theorem}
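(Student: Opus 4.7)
The plan is to approximate $g$ uniformly by the Green functions $g_n$ and exploit the fact that each $(D, g_n) + \widehat{(\varphi_n)}$ is effective, combined with the finiteness of $V(\varphi)$, to extract a limit which witnesses the Dirichlet property of $\overline{D}$.

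First I would choose an initial adelic family $g_0 = \{g_{0,v}\}_{v \in M_K}$ of $D$-Green functions of $C^0$-type such that $(D, g_0) \geqslant 0$. This is possible because $D$ is effective: one picks an integral model of $(X, D)$ on a non-empty open subset of $\Spec(O_K)$ so that $D$ extends to an effective $\RR$-Cartier divisor, furnishing non-negative model Green functions at almost all finite places, and at the remaining finitely many places one invokes Proposition~\ref{Pro:e-gextension} to find lower bounds and then shifts by appropriate constants. With this choice in hand, fact (b) of Subsection~\ref{subsec:dynamic:system} asserts that
\[
(D, g_n) + \widehat{(\varphi_n)} \geqslant 0 \quad \text{for every } n \geqslant 0.
\]

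Next, by Lemma~\ref{lemma:compact} the finiteness of $V(\varphi)$ produces a subsequence $\{\varphi_{n_i}\}_{i \in \mathbb{N}}$ converging in $V(\varphi)$ (equipped with its natural topology as a finite-dimensional real vector space) to some $\varphi_{\infty}$. Via the injection of Lemma~\ref{lemma:injective:principal}, $V(\varphi)$ embeds in $\aDiv_{\RR}(X)$, and the finite-dimensional subspace $H := \RR\overline{D} + V(\varphi)$ contains each $\overline{D} + \widehat{(\varphi_{n_i})}$, with limit $\overline{D} + \widehat{(\varphi_{\infty})}$ inside $H$.

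To pass from effectivity of $(D, g_{n_i}) + \widehat{(\varphi_{n_i})}$ to effectivity of the limit, I would apply Lemma~\ref{lemma:limit:effective}. Setting $\theta_{n_i} := g_{n_i} - g$, each $\theta_{n_i, v}$ vanishes for all $v$ in the open set $U \subset \Spec(O_K)$ on which $\lambda_{\mathfrak{p}} = 0$ (as recalled in the proof of Lemma~\ref{lemma:compact}), and at each of the remaining finitely many places the uniform convergence $\|\theta_{n_i, v}\|_{\sup} \to 0$ is provided by Lemma~\ref{lem:uniform:conv}. Since
\[
\overline{D} + \widehat{(\varphi_{n_i})} + (0, \theta_{n_i}) = (D, g_{n_i}) + \widehat{(\varphi_{n_i})} \geqslant 0,
\]
Lemma~\ref{lemma:limit:effective} applied inside $H$ yields $\overline{D} + \widehat{(\varphi_{\infty})} \geqslant 0$, so $\varphi_{\infty} \in \hat{\Gamma}(X, \overline{D})^{\times}_{\RR}$, which is the Dirichlet property.

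The main obstacle, as I see it, is the compatibility of the initial data: one must secure an initial $g_0$ with $(D, g_0) \geqslant 0$ whose associated defect $\lambda$ still vanishes at almost all finite places, so that the place-by-place uniform convergence furnished by Lemma~\ref{lem:uniform:conv} is strong enough to feed into Lemma~\ref{lemma:limit:effective}. Once this structural compatibility is in place, the argument is essentially a compactness-and-limit assembly of the preliminary results of Sections~2 and the previous subsection.
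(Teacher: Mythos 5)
Your proposal follows the paper's proof step for step: choose $g_0$ with $(D,g_0)\geqslant 0$, invoke Lemma~\ref{lem:recursive:relations} to get $(D,g_n)+\widehat{(\varphi_n)}\geqslant 0$, extract a convergent subsequence of $\{\varphi_n\}$ via Lemma~\ref{lemma:compact}, and pass to the limit via Lemma~\ref{lemma:limit:effective}. The only additions are elaborations the paper leaves implicit (naming the finite-dimensional ambient subspace $H$, and noting the "obstacle" — which is in fact automatic, since changing $g_0$ at the finitely many bad places does not disturb the fact that $\lambda_{\mathfrak p}=0$ for almost all $\mathfrak p$, a feature already baked into the definition of adelic arithmetic $\RR$-Cartier divisors and used in the proof of Lemma~\ref{lemma:compact}).
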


\begin{proof}
We can choose an initial family $g_0$ of $D$-Green functions with $(D, g_0) \geqslant 0$.
By Lemma~\ref{lemma:compact} together with Lemma~\ref{lem:recursive:relations}, 
there is a subsequence $\{ \varphi_{n_i} \}_{i\in\mathbb N}$ of $\{ \varphi_n \}_{n\geqslant 1}$ such that
the limit of $\{ \varphi_{n_i} \}_{i\in\mathbb N}$ exists in the usual topology of $V(\varphi)$
as a finite dimensional vector space over $\RR$. We denote the limit by $\varphi$.
Note that $(D, g_{n_i}) + \widehat{(\varphi_{n_i})} \geqslant 0$ by
Lemma~\ref{lem:recursive:relations}, so that, if we set $h_{n_i} = g_{n_i} - g$, then
then $\overline{D} + (0, h_{n_i}) + \widehat{(\varphi_{n_i})} \geqslant 0$.
Therefore,
by Lemma~\ref{lemma:limit:effective},
$\overline{D} + \widehat{(\varphi)} \geqslant 0$, as required.
\end{proof}

\begin{Remark}
Let $\theta \in \Rat(X)^{\times}_{\RR}$. If we set $D' = D + (\theta)$,
then, by Lemma~\ref{lem:canonical:Green:principal}, we have 
\[
f^*(D') = dD' +(f^*(\theta)\theta^{-d}\varphi).
\]
In order to apply Theorem~\ref{thm:Dirichlet:prop},
it is better to choose $\theta$ such that $D' \geqslant 0$ and
$f^*(\theta)\theta^{-d}\varphi$ is simple as much as possible.

For example, $X = \Proj(K[T_0, T_1])$, $D = \{ T_1 = 0 \}$ and $f$ is given by
\[
f(T_0 : T_1) = (T_0^2 : T_1^2 + c T_0^2)
\]
for some $c \in K$. This is a famous complex dynamic system.
If we set $z = T_0/T_1$, then $f^*(D) = 2D + (1 + cz^{2})$.
We do not know the finiteness property of $V(1 + cz^{2})$.
On the other hand, if we set $D' = \{ T_0 = 0 \}$,
then $f^*(D') = 2D'$. In this case, $V(1)$ is trivial, so that
by the above theorem, the Dirichlet property holds for $D'$ equipped with its canonical Green function (see \S\ref{subsec:canonical:green:function}).
\end{Remark}

\begin{Corollary}
\label{cor:Dirichlet:prop}
If $D$ is effective and $f^*(D) = dD$, then
the canonical compactification $\overline{D}$ of $D$ has the Dirichlet property.
\end{Corollary}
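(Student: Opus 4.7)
The plan is to derive this as an immediate special case of Theorem~\ref{thm:Dirichlet:prop}. Under the hypothesis $f^*(D)=dD$, we may take the rational function $\varphi$ governing the dynamics to be $\varphi=1$, since $f^*(D)=dD+(1)$. Then by the recursive definition
\[
\varphi_n=\prod_{i=0}^{n-1}(f^i)^*(\varphi)^{1/d^{i+1}},
\]
we obtain $\varphi_n=1$ for every $n\geqslant 1$. Consequently, the vector subspace $V(\varphi)\subseteq \Rat(X)^{\times}_{\RR}$ generated by $\{\varphi_n\}_{n\geqslant 1}$ is the trivial subspace $\{0\}$, which is trivially finite-dimensional, i.e., it has the finiteness property.

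Since $D$ is assumed to be effective and we have just verified that $V(\varphi)$ has the finiteness property, all hypotheses of Theorem~\ref{thm:Dirichlet:prop} are fulfilled. Applying that theorem directly yields that the canonical compactification $\overline{D}$ of $D$ satisfies the Dirichlet property.

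There is essentially no obstacle in this argument; the entire content of the corollary has already been absorbed into Theorem~\ref{thm:Dirichlet:prop}. The only point worth a brief sanity check is that the canonical compactification $\overline{D}$ associated with the data $(D,f,d,\varphi=1)$ is indeed the one whose Dirichlet property is asserted by the theorem. This is clear from the construction in Subsection~\ref{subsec:dynamic:system}, where $\overline{D}$ is uniquely characterised by $f^*(\overline{D})=d\overline{D}+\widehat{(\varphi)}$, which in the present case reduces to $f^*(\overline{D})=d\overline{D}$. Thus the corollary is a direct specialisation of the main theorem.
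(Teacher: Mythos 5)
Your proof is correct and matches exactly what the paper intends: with $\varphi=1$, each $\varphi_n=1$ is the identity (the zero element of the $\RR$-vector space $\Rat(X)^{\times}_{\RR}$), so $V(\varphi)=\{0\}$ is trivially finite-dimensional and Theorem~\ref{thm:Dirichlet:prop} applies. The paper itself presents this as an immediate specialisation (the remark just before the corollary, discussing $V(1)$, spells out the same point).
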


Finally let us consider examples.
\begin{Example}
\label{Example:Dirichlet}
We assume $X$ is the $n$-dimensional projective space over $K$, that is,
$X = \PP^n_K = \Proj(K[T_0, T_1, \ldots, T_n])$. Let $f : \PP^n_K \to \PP^n_K$ be
a surjective endomorphism over $K$. We assume that $f$ is a polynomial map, that is,
\[
f\left( \PP^n_K \setminus \{ T_0 = 0 \} \right) \subseteq \PP^n_K \setminus \{ T_0 = 0 \}.
\]
We set $z_i = T_i/T_0$ for $i=1, \ldots, n$.
Then there are $f_1, \ldots, f_n \in K[z_1, \ldots, z_n]$ such that
\[
f(1 : x_1 : \cdots : x_n) = (1 : f_1(x_1, \ldots, x_n) :  \cdots : f_n(x_1, \ldots, x_n)).
\]
We set $d = \max \{ \deg(f_1), \ldots, \deg(f_n) \}$. Then 
$f : \PP^n_K \to \PP^n_K$ is
given by
\[
f(T_0 : \cdots : T_n) = (T_0^d : F_1(T_0, \ldots, T_n) : \cdots : F_n(T_0, \ldots, T_n)),
\]
where $F_1, \ldots, F_n$ are homogeneous polynomials of degree $d$ with
\[
F_i(1, X_1, \ldots, X_n) = f_i(X_1, \ldots, X_n)
\]
for $i=1, \ldots, n$ 
and
\[
\{ (t_1, \ldots, t_n) \in \overline{K}^{n} \mid  F_1(0, t_1, \ldots, t_n) = \cdots =
F_n(0, t_1, \ldots, t_n) = 0 \} = \{ (0, \ldots, 0 )\}.
\]
We set $D = \{ T_0 = 0 \}$. Then $f^*(D) = d D$, so that the Dirichlet property holds for
the canonical compactification by Corollary~\ref{cor:Dirichlet:prop}.

For example, in the case where $f_c : \PP^1_K \to \PP^1_K$ is given by 
\[
f_c(T_0 : T_1) = (T_0^2 : T_1^2 + c T_0^2)\qquad (c \in K),
\]
it is well-known that the Julia set of $f_c$ heavily depends on the choice of $c$.
Nevertheless, the Dirichlet property holds.
\end{Example}

\begin{Example}
The Dirichlet property is very sensitive on the choice of the dynamic system. For example,
we set $K := \QQ(\sqrt{-1})$, $X := \PP^1_{K} = \Proj(K[T_0, T_1])$ and $z := T_1/T_0$.
Let us consider two endomorphisms $f$ and $f'$ on $X$ given by
\[
f(T_0 : T_1) = (2T_0T_1 : T_1^2 - T_0^2)
\quad\text{and}\quad
f'(T_0 : T_1) = (2\sqrt{-1}T_0T_1 : T_1^2 - T_0^2),
\]
that is, $f(z) = (1/2)(z - 1/z)$ and $f'(z) = (1/2\sqrt{-1})(z - 1/z)$.
If we set $D := \{ T_1 - \sqrt{-1}T_0 = 0 \}$, then
$f^*(D) = 2D$ because 
\[
(T_1^2 - T_0^2) - \sqrt{-1} (2 T_0 T_1) = (T_1 - \sqrt{-1}T_0)^2.
\]
Let $g$ be the canonical $D$-Green function of $C^0$-type with respect to $f$.
Then, by Corollary~\ref{cor:Dirichlet:prop}, $\overline{D} = (D, g)$ has the Dirichlet property.
On the other hand, for $\sigma \in M_{K}^{\infty}$,
it is well-known that the Julia set of $f'$ on $X_{\sigma}$ is equal to $X_{\sigma}$ itself
(cf. \cite[Theorem~4.2.18]{HolDyn}).
Therefore, by \cite[Theorem~4.5]{DsysDirichlet},
for any ample $\RR$-Cartier divisor $A$,
the canonical compactification $\overline{A}$ with respect to $f'$ does not have
the Dirichlet property.
\end{Example}

\subsection{A remark on a sufficient condition for the Dirichlet property}
\label{subsec:rem:Dirichlet}
In this subsection, we do not suppose given the endomorphism $f : X \to X$.
Let $\overline{D}$ be an adelic arithmetic $\RR$-Cartier divisor of $C^0$-type on $X$.
We assume that $D$ is big and $\overline{D}$ is pseudo-effective. 
Let $\zeta$ be an adelic $\RR$-divisor on $\Spec(K)$
with $\adeg(\zeta) = 1$. Then $\overline{D} + t\pi^*(\zeta)$ is big
for all $t \in (0, \infty)$,
where $\pi :  X \to \Spec(K)$ is the canonical morphism (see \cite[\S6.2]{DsysDirichlet}).

\begin{Proposition}
Let $H$ be a finite-dimensional vector subspace of $\Rat(X)^{\times}_{\RR}$.
If there is a sequence $\{ t_n \}_{n\in\mathbb N}$ of positive numbers such that $\lim_{n\to\infty} t_n = 0$ and,
for each $n\in\mathbb N$, we can find $\theta_n \in H$ with
$\overline{D} + t_n \pi^*(\zeta) + \widehat{(\theta_n)} \geqslant 0$,
then $\overline{D}$ has the Dirichlet property.
\end{Proposition}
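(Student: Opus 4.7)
The plan is to combine a compactness argument in the finite-dimensional space $H$ with the closedness of the effectivity relation, using the two machinery lemmas already proved: Lemma~\ref{lemma:convex:compact}(2) and Lemma~\ref{lemma:limit:effective}. First I would decompose $\zeta = \zeta^{+} - \zeta^{-}$ as a difference of two effective adelic $\RR$-divisors on $\Spec(K)$. Since $\zeta$ is of the form $(0,(c_{v})_{v\in M_{K}})$ with $c_{v}\in\RR$, one can simply take $\zeta^{+} := (0,(\max(c_{v},0))_{v})$ and $\zeta^{-} := (0,(\max(-c_{v},0))_{v})$. Then the pull-backs $\pi^{*}(\zeta^{\pm})$ are effective adelic arithmetic $\RR$-Cartier divisors on $X$.

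Next, set $T := \sup_{n} t_{n}$, which is finite because $t_{n}\to 0$, and put $\overline{D}' := \overline{D} + T\pi^{*}(\zeta^{+})$. For every $n\in\mathbb{N}$ the identity
\[
\overline{D}' + \widehat{(\theta_{n})} = \bigl(\overline{D} + t_{n}\pi^{*}(\zeta) + \widehat{(\theta_{n})}\bigr) + (T - t_{n})\pi^{*}(\zeta^{+}) + t_{n}\pi^{*}(\zeta^{-})
\]
exhibits $\overline{D}' + \widehat{(\theta_{n})}$ as a sum of three non-negative terms (the first by hypothesis, the others because $T\geqslant t_{n}\geqslant 0$ and $\zeta^{\pm}\geqslant 0$). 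Therefore $\theta_{n} \in H \cap \hat{\Gamma}(X,\overline{D}')^{\times}_{\RR}$ for every $n$. By Lemma~\ref{lemma:convex:compact}(2), this set is compact, so after passing to a subsequence I may assume that $\{\theta_{n}\}_{n\in\mathbb{N}}$ converges to some $\theta \in H$.

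To finish, let $H'$ be the finite-dimensional vector subspace of $\aDiv_{\RR}(X)$ generated by $\overline{D}$ together with $\widehat{(H)}$, where the embedding $H\hookrightarrow \aDiv_{\RR}(X)$, $s\mapsto\widehat{(s)}$, is injective by Lemma~\ref{lemma:injective:principal}. The sequence $\overline{D} + \widehat{(\theta_{n})}$ then lies in $H'$ and converges to $\overline{D} + \widehat{(\theta)}$ in the natural topology. The hypothesis can be written as
\[
\bigl(\overline{D} + \widehat{(\theta_{n})}\bigr) + (0,\,t_{n}(c_{v})_{v\in M_{K}}) \;\geqslant\; 0,
\]
where each component $t_{n}c_{v}$ is a constant function on $X_{v}^{\mathrm{an}}$ tending to $0$ uniformly as $n\to\infty$. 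An application of Lemma~\ref{lemma:limit:effective} now yields $\overline{D} + \widehat{(\theta)} \geqslant 0$, so $\theta$ witnesses the Dirichlet property for $\overline{D}$.

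The only subtle point of the argument is the construction of a single adelic divisor $\overline{D}'$ that dominates $\overline{D}+t_{n}\pi^{*}(\zeta)$ for all $n$ at once, in order to pin the whole sequence $\{\theta_{n}\}$ inside a compact convex subset of $H$. This is delicate precisely because $\zeta$ itself need not be effective, but the positive/negative part decomposition handles it cleanly; everything else reduces to invoking the earlier compactness and closedness lemmas.
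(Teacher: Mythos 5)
Your proof is correct and its core (pin the sequence $\{\theta_n\}$ inside a single compact convex set $H\cap\hat{\Gamma}(X,\overline{D}')^{\times}_{\RR}$ via Lemma~\ref{lemma:convex:compact}, extract a convergent subsequence, then pass to the limit with Lemma~\ref{lemma:limit:effective}) is exactly the paper's argument. The only difference is the preliminary reduction that produces the single dominating divisor $\overline{D}'$: the paper invokes the Dirichlet property of $\zeta$ on $\Spec K$ (a consequence of the Dirichlet unit theorem, valid here since $\adeg(\zeta)=1>0$) to choose $\vartheta\in K^{\times}_{\RR}$ with $\zeta+\widehat{(\vartheta)}\geqslant 0$, absorbs $\pi^*(\vartheta)^{-t_n}$ into the $\theta_n$'s, and enlarges $H$ by $\pi^*(\vartheta)$ before taking $\overline{D}'=\overline{D}+t\pi^*(\zeta+\widehat{(\vartheta)})$; you instead decompose $\zeta=\zeta^+-\zeta^-$ into its positive and negative parts (both effective adelic divisors on $\Spec K$) and take $\overline{D}'=\overline{D}+T\pi^*(\zeta^+)$ directly. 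Your route is a little more elementary in that it does not call on the unit theorem and does not require modifying $H$; the paper's trick of normalising $\zeta$ is a standard device and slightly more in keeping with the ethos of the section, but there is no mathematical advantage to it here. Both are correct, and the verification that $t_n\pi^*(\zeta)$ has Green components $t_n c_v$ tending to zero uniformly --- which you state explicitly --- is precisely what is needed to apply Lemma~\ref{lemma:limit:effective}.
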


\begin{proof}
Let $\vartheta \in K^{\times}_{\RR}$ such that
$\zeta + \widehat{(\vartheta)} \geqslant 0$.
Then 
\[
\overline{D} + t_n \pi^*(\zeta) + \widehat{(\theta_n)} =
\overline{D} + t_n (\pi^*(\zeta + \widehat{(\vartheta)}) + \widehat{(\theta_n\pi^*(\vartheta)^{-t_n})}, 
\]
so that, replacing $H$ by the vector subspace generated by $H$ and $\pi^*(\vartheta)$,
we may assume that $\zeta \geqslant 0$.

We choose $t > 0$ such that $t_n \leqslant t$ for all $n$.
Then 
\[
\theta_n \in H \cap \hat{\Gamma}(X, \overline{D} + t\pi^*(\zeta))^{\times}_{\RR}
\]
for all $n$. Note that $H \cap
\hat{\Gamma}(X, \overline{D} + t\pi^*(\zeta))^{\times}_{\RR}$ is a compact convex set
by Lemma~\ref{lemma:convex:compact}.
Therefore, there is a subsequence $\{ \theta_{n_k} \}_{k\in\mathbb N}$ of $\{ \theta_n \}_{n\in\mathbb N}$ such that
the limit $\theta$ of $\{ \theta_{n_k} \}_{k\in\mathbb N}$ exists.
Moreover, by Lemma~\ref{lemma:limit:effective}, we have
$\overline{D} + \widehat{(\theta)} \geqslant 0$.
\end{proof}

\begin{Example}[Toric variety]
Let $N$ be a free $\ZZ$-module of rank $n$ and $M = \Hom_{\ZZ}(N, \ZZ)$.
Let $\Sigma$ be a complete fan in $N_{\RR}$.
Let $X = X(\Sigma)_K$ be the toric variety over $K$ associated with $\Sigma$.
Let $D$ be a big toric $\RR$-Cartier divisor on $X$ and
$g$ a family of $D$-Green functions of toric type.
If $\overline{D}$ is big,
then we can find $m \in M_{\RR}$ such that
$\overline{D} + \widehat{(\chi^{m})} \geqslant 0$.
Therefore, we can see that any pseudo-effective adelic arithmetic $\RR$-Cartier divisor of toric type
has the Dirichlet property.
\end{Example}

\section{Arakelov geometry over a trivially valued field}

In this section, we introduce the analogue of Arakelov geometry in the setting of projective varieties over a trivially valued field. Throughout the section, let $K$ be a field and $|\ndot|$ be the trivial absolute value on $K$. Namely $|a|=1$ if $a\in K^{\times}$ and $|0|=0$.

\begin{Definition}\label{Def:principal}
Let $X$ be an integral projective scheme over $\Spec K$. 
By \emph{adelic $\mathbb R$-Cartier divisor} on $X$, we refer to a couple $\overline D=(D,g)$, where $D$ is an $\mathbb R$-Cartier divisor on $X$ and $g$ is a $D$-Green function of $C^0$-type. We say that an adelic $\mathbb R$-Cartier divisor $(D,g)$ is \emph{effective} if $D$ is effective 
as an $\RR$-Cartier divisor
and $g$, viewed as a map from $X^{\mathrm{an}}$ to $\mathbb R\cup\{+\infty\}$ (see Remark \ref{Rem:effective}), is a non-negative function.

The set $\widehat{\mathrm{Div}}_{\mathbb R}(X)$ of adelic $\mathbb R$-Cartier divisors on $X$ forms a vector space over $\mathbb R$. The map from $\mathrm{Rat}(X)_{\mathbb R}^{\times}$ to $\widehat{\mathrm{Div}}_{\mathbb R}(X)$ sending $f$ to $\widehat{(f)}:=((f),-\log|f|)$ is $\mathbb R$-linear. The adelic $\mathbb R$-Cartier divisors lying in the image of this map are said to be \emph{principal}. If two adelic $\mathbb R$-Cartier divisors differ by a principal one, then we say that they are \emph{$\mathbb R$-linearly equivalent}.
\end{Definition}

\subsection{Global section space and sup norm}
Let $X$ be an integral projective scheme over $\Spec K$ and $(D,g)$ be an adelic $\mathbb R$-Cartier divisor on $X$. Let 
\[H^0(D):=\{s\in\mathrm{Rat}(X)^{\times}\,:\,(s)+D \geqslant_{\RR} 0
\}\cup\{0\}\subset\mathrm{Rat}(X).\]
This is a finite dimensional $K$-vector subspace of $\mathrm{Rat}(X)$ (cf. Remark~\ref{rem:finite:dim:H0}).

Let $s$ be a non-zero
element in $H^0(D)$. As $(s)+D$ is an effective $\mathbb R$-Cartier divisor on $X$ and $g-\log|s|$ is a Green function of $(s)+D$, we obtain that 
$|s|\mathrm{e}^{-g} = \mathrm{e}^{-g  + \log |s|}$, which is denoted by $|s|_g$, 
is continuous on $X^{\mathrm{an}}$ by Proposition~\ref{Pro:e-gextension}. We define
$\| s \|_g$ to be
\[
\| s \|_g := \sup \{ |s|_g(x) \mid x \in X^{\mathrm{an}} \}.
\]
Then $\|\ndot\|_g:H^0(D)\rightarrow\mathbb R_+$ actually yields an ultrametric norm on the $K$-vector space $H^0(D)$. Note that $\{ \| s \|_g \mid s \in H^0(D) \}$ is a finite set because
the absolute value of $K$ is trivial (cf. \cite[\S1.2.1]{Extension}).
The function $\|\ndot\|_g$
defines a decreasing $\mathbb R$-filtration on the vector space $H^0(D)$ as follows:
\[\forall\,t\in\mathbb R,\quad \mathcal F^t(H^0(D)):=
\{s\in H^0(D)\,:\, \| s\|_g \leqslant e^{-t}\},\]
which is a vector subspace of $H^0(D)$ because the absolute value of $K$ is trivial.
The function $t\mapsto \mathrm{rk}_{K}(\mathcal F^t(H^0(D)))$ is decreasing and left-continuous. Moreover, $\mathcal F^t(H^0(D))$ reduces to the zero vector space when $t$ is sufficiently positive, and $\mathcal F^t(H^0(D))=H^0(D)$ when $t$ is sufficiently negative.  
We let
\[
\widehat{\deg}_+(D,g):={\int_{0}^{+\infty}\mathrm{rk}_K(\mathcal F^t(H^0(D)))}\,\mathrm{d}t\]
and
\[\lambda_{\max}(D,g):=\sup\{t\in\mathbb R\,:\,\mathcal F^t(H^0(D))\neq\{0\}\}.\]
We refer the readers to \cite[\S 1.2]{Chen10} for more details on $\mathbb R$-filtered vector spaces and to \cite[\S3.2]{Chen10b} for the comparison of the invariant $\widehat{\deg}_+$ and the logarithm of the number of small sections in the classic setting of arithmetic geometry over a number field.

\begin{Remark}
\label{rem:finite:dim:H0}
The finite-dimensionality of
$H^0(X, D)$ can be checked in the following way:

\medskip
{\bf Step 1}: 
Clearly we may assume that $H^0(X, D) \not= \{ 0 \}$, so that we can choose
$s \in H^0(X, D) \setminus \{ 0 \}$.
Then a homomorphism $H^0(X, D) \to H^0(X, D + (s))$ given by $\phi \mapsto \phi s^{-1}$
yields an isomorphism. Therefore, replacing $D$ by $D + (s)$, we may suppose that
$D \geqslant_{\RR} 0$.

\medskip
{\bf Step 2}: By Step~1, there are effective Cartier divisors $D_1, \ldots, D_r$ and
$a_1, \ldots, a_r \in \RR_{>0}$ such that $D = a_1 D_1 + \cdots + a_r D_r$.
We choose integers $a'_1, \ldots, a'_r$ such that $a_i \leqslant a'_i$ for all $i$.
If we set $D' = a'_1 D_1 + \cdots + a'_r D_r$, then $H^0(X, D) \subseteq H^0(X, D')$,
so that we may assume that $D$ is a Cartier divisor.

\medskip
{\bf Step 3}: Let $\mu : X' \to X$ be the normalization of $X$. Then $H^0(X, D) \subseteq H^0(X', \mu^*(D))$, and
hence we may assume that $X$ is normal. Therefore, by using Hartogs' property,
we can see that the natural homomorphism $H^0(X, \OO_X(D)) \to H^0(X, D)$ is bijective, as required.
\end{Remark}

\subsection{Height and essential minimum}
\label{subsec:Height:essential:minimum}
If $(D,g)$ is an adelic $\mathbb R$-Cartier divisor on $X$, we let
\[\lambda_{\max}^{\mathrm{asy}}(D,g):=\limsup_{n\rightarrow+\infty}\frac{1}{n}{\lambda_{\max}(nD,ng)}.\]
Since the sequence $\{\lambda_{\max}(nD,ng)\}_{n\geqslant 1}$ is super-additive, we obtain that 
\[\lambda_{\max}^{\mathrm{asy}}(D,g)=\sup_{n\geqslant 1}\frac{1}{n}{\lambda_{\max}(nD,ng)}.\]
This invariant is closely related to the analogue in the setting of arithmetic geometry over a trivially valued field of the essential minimum of height function.

Here let us introduce the height function $h^{\mathrm{an}}_{(D,g)}$ on $X^{\mathrm{an}}$ associated with $(D,g)$.
Fix a point $\xi$ of $X^{\mathrm{an}}$. 
Let $p_{\xi} \in X$ be the associated scheme point of $\xi$ and $\kappa(\xi)$
be the residue field of $p_{\xi}$.
The point $\xi$ gives rise to an absolute value $v_{\xi}$ on $\kappa(\xi)$.
Note that $v_{\xi}$ is non-archimedean because $v_{\xi}$ is trivial on $K$.
We set 
\[
\mathfrak o_{\xi} := \{ \alpha \in \kappa(\xi) \mid v_{\xi}(\alpha) \leqslant 1 \}
\quad\text{and}\quad
\mathfrak m_{\xi} := \{ \alpha \in \kappa(\xi) \mid v_{\xi}(\alpha) < 1 \}.
\]
In the case where $v_{\xi}$ is trivial, $\mathfrak o_{\xi} = \kappa(\xi)$ and
$\mathfrak m_{\xi} = \{ 0 \}$.
Since $X$ is proper over $\Spec K$,
by the valuative criterion of properness there is a unique $K$-morphism
$\Spec(\mathfrak o_{\xi}) \to X$ such that the following diagram is commutative:\
\[
\begin{tikzcd}
\Spec(\mathfrak o_{\xi}) \arrow[r] & X \\
\Spec(\kappa(\xi)) \arrow[u]\arrow[ru] &
\end{tikzcd}
\]
where $\Spec(\kappa(\xi)) \to \Spec(\mathfrak o_{\xi})$ and
$\Spec(\kappa(\xi)) \to X$ are the canonical morphisms.
The image of $\mathfrak m_{\xi}$ by $\Spec(\mathfrak o_{\xi}) \to X$ is
denoted by $r_{\xi}$, which is called the \emph{reduction point} of $\xi$.

Let $f$ be a local equation of $D$ on a Zariski open set $U$ containing $r_{\xi}$.
Note that $\xi \in U^{\an}$ because $p_{\xi} \in U$.
By definition the function $g+\log|f|$ extends to a continuous function 
$\vartheta_f$ on $U^{\mathrm{an}}$. 
Here we consider the evaluation $\vartheta_f(\xi)$ of $\vartheta_f$ at $\xi$.
It does not depend on the choice of $U$ and $f$.
Indeed, let $f'$ be another local equation of $D$ on a Zariski open set $U'$ containing $r_{\xi}$. 
Then there is $u \in (\OO_{X,r_x}^{\times})_{\RR}$ with
$f' = u f$, so that the extension $\vartheta_{f'}$ of $g + \log |f'|$ is equal to
$\vartheta_f + \log |u|$ around $\xi$, and hence the assertion follows because $|u|(\xi) = 1$. 
Thereore it is denoted by  $h^{\mathrm{an}}_{(D,g)}(\xi)$.
For any point $x$ of $X$, we denote by $x^{\mathrm{an}}$ the point in $X^{\mathrm{an}}$ corresponding to the point $x$ and the trivial absolute value on the residue field of $x$. 
We define $h_{(D, g)}(x)$ to be
$h_{(D, g)}(x) := h^{\mathrm{an}}_{(D, g)}(x^{\mathrm{an}})$.

For a Cartier divisor $E$ on $X$, we say $E$ is {\em semiample} if $\OO_X(mE)$ is generated by global sections
for some positive integer $m$.
In general, an $\RR$-Cartier divisor $D$ on $X$ is said to be {\em semiample} if there are
semiample Cartier divisors $E_1, \ldots, E_r$ on $X$ and $a_1, \ldots, a_r \in \RR_{>0}$ with
$D = a_1 E_1 + \cdots + a_r E_r$.
The following proposition contains basic properties of the height functions.

\begin{Proposition}
\label{prop:basic:prop:height}
Let $(D, g)$ and $(D', g')$ be
adelic $\RR$-Cartier divisors on $X$. Then we have the following:
\begin{enumerate}
\renewcommand{\labelenumi}{\rom{(\arabic{enumi})}}
\item
$h^{\mathrm{an}}_{a(D, g) + a'(D', g')}(\xi) = a h^{\mathrm{an}}_{(D, g)}(\xi) + a' h^{\mathrm{an}}_{(D', g')}(\xi)$ for all 
$\xi \in X^{\mathrm{an}}$ and $a, a' \in \RR$.

\item
$h^{\mathrm{an}}_{\widehat{(s)}}(\xi) = 0$ for all $\xi \in X^{\mathrm{an}}$ and $s \in \Rat(X)^{\times}_{\RR}$.

\item
Let $\pi : Y \to X$ be a morphism of integral projective schemes over $K$ such that
$\pi(Y) \not\subseteq \Supp(D)$. Then $h^{\mathrm{an}}_{\pi^*(D, g)}(\zeta) = h^{\mathrm{an}}_{(D, g)}(\pi^{\mathrm{an}}(\zeta))$
for all $\zeta \in Y^{\mathrm{an}}$.

\item
If $D$ is semiample, then there is a constant $C$ such that $h^{\mathrm{an}}_{(D, g)}(\xi) \geqslant C$ for all
$\xi \in X^{\mathrm{an}}$.
\end{enumerate}
\end{Proposition}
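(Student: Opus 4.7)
The plan is to derive parts (i)--(iii) directly from the definition of $h^{\mathrm{an}}_{(D,g)}$ as the value at $\xi$ of the continuous extension $\vartheta_f$ of $g+\log|f|$, combined with the functoriality of local equations and of the reduction point, and then to deduce (iv) by reducing to the case where $\mathcal O_X(D)$ is globally generated and invoking Proposition~\ref{Pro:e-gextension}.

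For (i), I would fix local equations $f$ and $f'$ of $D$ and $D'$ at $r_\xi$, observe that $f^{a}(f')^{a'}$ is a local equation of $aD+a'D'$, and pass the identity
\[
(ag+a'g')+\log|f^{a}(f')^{a'}|\;=\;a(g+\log|f|)+a'(g'+\log|f'|)
\]
in $\widehat C^0(X^{\mathrm{an}})$ to continuous extensions evaluated at $\xi$. Item (ii) is the special case where $s$ itself is a local equation of $(s)$, so that $-\log|s|+\log|s|$ extends by the zero function. For (iii), I would begin by verifying, via the functoriality of the valuative criterion, that $r_{\pi^{\mathrm{an}}(\zeta)}=\pi(r_\zeta)$: the inclusion of valuation rings $\mathfrak o_{\pi^{\mathrm{an}}(\zeta)}\hookrightarrow\mathfrak o_\zeta$ makes the composition $\Spec\mathfrak o_\zeta\to Y\to X$ factor through $\Spec\mathfrak o_{\pi^{\mathrm{an}}(\zeta)}\to X$, and uniqueness in the valuative criterion identifies the images of the closed points. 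With this in hand, any local equation $f$ of $D$ at $r_{\pi^{\mathrm{an}}(\zeta)}$ pulls back to a local equation of $\pi^*D$ at $r_\zeta$, and the continuous extension of $\pi^*g+\log|\pi^*f|$ at $\zeta$ coincides with $\vartheta_f(\pi^{\mathrm{an}}(\zeta))$.

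For (iv), I would apply (i) to decompose $D=\sum_i a_iE_i$ with each $E_i$ semiample Cartier, reducing to a single semiample Cartier divisor, and then once more (after replacing it by a positive multiple) to the case of a Cartier divisor $D$ with $\mathcal O_X(D)$ globally generated, by sections $s_0,\dots,s_N$ corresponding to rational functions $\tilde s_j\in H^0(X,D)$. Since $D+(\tilde s_j)\geqslant 0$, Proposition~\ref{Pro:e-gextension} applied to the Green function $g-\log|\tilde s_j|$ of $D+(\tilde s_j)$ supplies constants $C_j$ with $g-\log|\tilde s_j|\geqslant C_j$ on $X^{\mathrm{an}}$. Given $\xi\in X^{\mathrm{an}}$, I would pick $j_0$ so that $s_{j_0}$ does not vanish at $r_\xi$; if $f$ is a local equation of $D$ at $r_\xi$, this amounts to $u:=\tilde s_{j_0}f\in\mathcal O_{X,r_\xi}^\times$. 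Evaluating the continuous extensions of both sides of the identity
\[
g+\log|f|\;=\;(g-\log|\tilde s_{j_0}|)+\log|u|
\]
at $\xi$, and using $|u|(\xi)=1$ (the valuative criterion sends $u$ to a unit in $\mathfrak o_\xi$), gives $h^{\mathrm{an}}_{(D,g)}(\xi)=(g-\log|\tilde s_{j_0}|)(\xi)\geqslant C_{j_0}\geqslant\min_j C_j$, a bound independent of $\xi$.

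The main obstacle is to justify that the above identity in $\widehat C^0(X^{\mathrm{an}})$ actually descends to an equality of finite real numbers at $\xi$, rather than degenerating for points sitting over the support of $D$ or of $(\tilde s_{j_0})$. The resolution is that the specification point $p_\xi$ is a generisation of the reduction point $r_\xi$, so any affine open neighbourhood of $r_\xi$ on which $u$ is a unit must also contain $p_\xi$; this places $\xi$ outside the analytic supports of $D$, of $(\tilde s_{j_0})$, and of $D+(\tilde s_{j_0})$, and the identity becomes one of finite real numbers at $\xi$. The same generisation principle underlies the reduction-point functoriality invoked in (iii).
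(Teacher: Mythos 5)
Your arguments for parts (1)--(3) coincide with the paper's: fix local equations of $D$ and $D'$ at $r_\xi$, pass the identity of Green functions plus logarithms of local equations to continuous extensions, and evaluate at $\xi$; and, for (3), observe that the pullback of a local equation of $D$ at $r_{\pi^{\mathrm{an}}(\zeta)}$ is a local equation of $\pi^*D$ at $r_\zeta$. You additionally make explicit the compatibility $r_{\pi^{\mathrm{an}}(\zeta)}=\pi(r_\zeta)$ of reduction points, which the paper uses without comment; your justification via the induced local homomorphism of valuation rings is correct.

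For (4) your route is genuinely different from the paper's, although both begin with the same reduction to a Cartier divisor $D$ with $\mathcal O_X(D)$ globally generated by $s_0,\dots,s_N$. At that point the paper transports the computation to $\mathbb{P}^N$: it forms the morphism $\pi$ attached to the sections, equips $H_0 = \{T_0 = 0\}$ with the Green function $h_0 = \log\max\{1,|z_{10}|,\dots,|z_{N0}|\}$, verifies by an explicit coordinate calculation that $h^{\mathrm{an}}_{(H_0, h_0)}\geqslant 0$ everywhere on $\mathbb{P}^{N,\mathrm{an}}_K$, and then pulls this back, absorbing the discrepancy with $mg$ into a bounded continuous function $\theta$ on $X^{\mathrm{an}}$. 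You instead stay on $X$: from the effectivity $D+(\tilde s_j)\geqslant 0$, Proposition~\ref{Pro:e-gextension} furnishes constants $C_j$ bounding $g-\log|\tilde s_j|$ from below, and for a given $\xi$, global generation lets you pick an index $j_0$ for which $u:=\tilde s_{j_0}f$ is a unit at $r_\xi$; the valuative criterion gives $|u|(\xi)=1$, whence $h^{\mathrm{an}}_{(D,g)}(\xi)=(g-\log|\tilde s_{j_0}|)(\xi)\geqslant\min_j C_j$. Your version is more economical and isolates precisely which ingredient of Proposition~\ref{Pro:e-gextension} carries the weight, while the paper's detour through $\mathbb{P}^N$ packages all the sections simultaneously in a way that mirrors the quotient-metric constructions used elsewhere in the text. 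Both are valid, and the care you take to locate $\xi$ outside the relevant analytic supports, using that $p_\xi$ is a generisation of $r_\xi$ and that Zariski closed sets are stable under specialisation, correctly handles the only delicate point in passing from identities in $\widehat{C}^0(X^{\mathrm{an}})$ to equalities of real numbers at $\xi$.
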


\begin{proof}
In the following proof, for $\xi \in X^{\mathrm{an}}$,
let $f$ and $f'$ be local equations of $D$ and $D'$ over an Zariski open set $U$ containing $r_{\xi}$, respectively.
Let $\vartheta_{f}$ and $\vartheta_{f'}$ be the continuous extensions of
$g + \log |f|$ and $g' + \log |f'|$ over $U^{\mathrm{an}}$, respectively.

(1) Note that $f^{a} {f'}^{a'}$ yields a local equation of $aD + a'D'$, and that
the continuous extension of 
\[
(a g + a'g') + \log |f^a {f'}^{a'}| = a (g + \log |f|) + a'(g' + \log |f'|)
\]
is $a \vartheta_f + a' \vartheta_{f'}$.
Therefore,
\[
h^{\mathrm{an}}_{a(D, g) + a'(D', g')}(\xi) = (a \vartheta_f + a' \vartheta_{f'})(\xi) = 
a h^{\mathrm{an}}_{(D, g)}(\xi) + a' h^{\mathrm{an}}_{(D', g')}(\xi).
\]

\medskip
(2) A local equation of $(s)$ is given by $s$, so that the continuous extension of
$- \log |s| + \log |s|$ is zero, as required.

\medskip
(3) For $\zeta \in Y^{\mathrm{an}}$, we set $\xi = \pi^{\mathrm{an}}(\zeta)$.
As $\pi^*(f)$ is a local equation of $\pi^*(D)$ over $\pi^{-1}(U)$,
the continuous extension of ${\pi^{\mathrm{an}}}^*(g) + \log |\pi^*(f)|$ is ${\pi^{\mathrm{an}}}^*(\vartheta_f)$.
Therefore,
\[
h^{\mathrm{an}}_{\pi^*(D, g)}(\zeta) = {\pi^{\mathrm{an}}}^*(\vartheta_f)(\zeta) 
= \vartheta_f(\pi^{\mathrm{an}}(\zeta)) = h^{\mathrm{an}}_{(D, g)}(\pi^{\mathrm{an}}(\zeta)).
\]

\medskip
(4) First we assume that $D$ is a Cartier divisor and $\OO_X(mD)$ is generated by global sections
for a positive integer $m$. Let $\{ s_0, \ldots, s_N \}$ be a basis  of $H^0(\OO_X(mD))$.
We consider a morphism $\pi : X \to \PP_K^N = \Proj(K[T_0, \ldots, T_N])$ given by
$x \mapsto (s_0(x) : \cdots : s_N(x))$. We set $H_0 := \{ T_0 = 0 \}$,
$z_{ij} := T_i/T_j$ ($0 \leqslant i, j \leqslant N$)
and $h_0 := \log \max \{ 1, |z_{10}|, \ldots, |z_{N0}| \}$.
Note that $h_0$ is a Green function of $H_0$.  

Here let us see that $h^{\mathrm{an}}_{(H_0, h_0)}(\zeta) \geqslant 0$ for all $\zeta \in \PP^{N, \mathrm{an}}_K$.
We assume that $r_{\zeta} \in U_{i} = \{ T_i \not= 0 \}$.
A local equation of $H_0$ on $U_i$ is given by $z_{0i}$  and
the continuous extension of $h_0 + \log |z_{0i}|$ is
\[
\log \max \{ |z_{0i}|, |z_{1i}|, \ldots, |z_{i-1 i}|, 1, |z_{i+1 i}|, \ldots, |z_{Ni}|  \},
\]
so that the assertion follows because the above function is non-negative on $U_i^{\mathrm{an}}$.

There is $s \in \Rat(X)^{\times}$ such that $\pi^*(H_0) = m D + (s)$, so that
we can find a continuous function $\theta$ on $X^{\mathrm{an}}$ such that
${\pi^{\mathrm{an}}}^*(h_0) = mg - \log |s| + \theta$. Since $\theta$ is a continuous function on the compact space
$X^{\mathrm{an}}$, there is a constant $C'$ such that $\theta \leqslant C'$ on $X^{\mathrm{an}}$.
Thus, for $\xi \in X^{\mathrm{an}}$, by using (1), (2) and (3) together with
the non-negativity of $h^{\mathrm{an}}_{(H_0, h_0)}$,
\begin{align*}
0 & \leqslant h^{\mathrm{an}}_{(H_0, h_0)}({\pi^{\mathrm{an}}}(\xi)) = h^{\mathrm{an}}_{\pi^*(H_0, h_0)}(\xi) = h^{\mathrm{an}}_{(mD + (s), mg - \log |s| + \theta)}(\xi) \\
& = h^{\mathrm{an}}_{m(D,g) + \widehat{(s)} + (0, \theta)}(\xi)
= m h^{\mathrm{an}}_{(D,g)}(\xi) + h^{\mathrm{an}}_{\widehat{(s)}}(\xi) + h^{\mathrm{an}}_{(0, \theta)}(\xi) \\
& = m h^{\mathrm{an}}_{(D,g)}(\xi) + \theta(\xi) \leqslant m h^{\mathrm{an}}_{(D,g)}(\xi) + C',
\end{align*}  
so that $h_{(D,g)}(\xi) \geqslant -C'/m$, as required.

\smallskip
Next we consider the general case, that is,
there are semiample Cartier divisors $E_1,\ldots,E_r$ on $X$ and $a_1, \ldots, a_r \in \RR_{>0}$
such that $D = a_1 E_1 + \cdots + a_r E_r$. We can find Green functions $e_1, \ldots, e_r$ of $E_1,
\ldots, E_r$, respectively such that $g = a_1 e_1 + \cdots + a_r e_r$.
By the previous observation, for each $i=1, \ldots, r$,
there is a constant $C_i$ such that $h^{\mathrm{an}}_{(E_i, e_i)} \geqslant C_i$
on $X^{\mathrm{an}}$. Therefore, by (1),
\[
h^{\mathrm{an}}_{(D, g)} = a_1 h^{\mathrm{an}}_{(E_1, e_1)} + \cdots + a_r h^{\mathrm{an}}_{(E_r, e_r)} \geqslant a_1 C_1 + \cdots + a_r C_r
\] 
on $X^{\mathrm{an}}$.                     
\end{proof}

We define the \emph{essential minimum} of $(D,g)$ as
\[\widehat{\mu}_{\mathrm{ess}}(D,g):=\sup_{Z\subsetneq X}\inf_{\begin{subarray}{c}x\in X\setminus Z\\ x\text{ closed}
\end{subarray}}h_{(D,g)}(x),\]
where $Z$ runs over the set of strict closed subschemes of $X$.

\begin{Proposition}\label{Pro:ess min g}
Let $(D,g)$ be an adelic $\mathbb R$-Cartier divisor on $X$. One has $\widehat{\mu}_{\mathrm{ess}}(D,g)= g(\eta_0)$, where $\eta_0$ denotes the point in $X^{\mathrm{an}}$ corresponding to the generic point of $X$ and the trivial absolute value on $\mathrm{Rat}(X)$.
\end{Proposition}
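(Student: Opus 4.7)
The plan is to prove the two inequalities $\widehat{\mu}_{\mathrm{ess}}(D,g)\le g(\eta_0)$ and $\widehat{\mu}_{\mathrm{ess}}(D,g)\ge g(\eta_0)$ separately, by combining the continuity of $g$ at $\eta_0$ with a topological property of the Berkovich space near the generic point. The preliminary observation is that, for any closed point $x\notin\Supp(D)$, a local equation $f$ of $D$ at $x$ is a unit in $\mathcal O_{X,x}$, so $|f|(x^{\mathrm{an}})=1$ (the absolute value on $\kappa(x)$ induced by $x^{\mathrm{an}}$ is trivial), and consequently $h_{(D,g)}(x)=g(x^{\mathrm{an}})$. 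The same reasoning applied at the generic point $\eta$ gives $h^{\mathrm{an}}_{(D,g)}(\eta_0)=g(\eta_0)$, so the proposition can be rephrased as a comparison between the value of $g$ at $\eta_0$ and its values at closed points of $X$.

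The key geometric input I will use is the following description of the Berkovich topology near $\eta_0$: a basic open neighbourhood of $\eta_0$ has the form
\[
\mathcal V=\{\xi\in V^{\mathrm{an}}\,:\,a_i<|f_i|(\xi)<b_i,\ i=1,\ldots,k\}
\]
for some non-empty Zariski open $V\subseteq X$, some nonzero $f_1,\ldots,f_k\in\mathcal O_X(V)$, and some reals $a_i<1<b_i$; the strict inequalities $a_i<1<b_i$ rely crucially on the triviality of the absolute value on $K$, which forces $|f_i|(\eta_0)=1$. Setting $U:=V\setminus\bigcup_i V(f_i)$, one obtains a non-empty Zariski open subset of $X$ with the property that $|f_i|(x^{\mathrm{an}})=1$ for every closed $x\in U$ and every $i$. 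Hence $x^{\mathrm{an}}\in\mathcal V$ for every such $x$, so every Berkovich neighbourhood of $\eta_0$ captures, via $x\mapsto x^{\mathrm{an}}$, the whole set of closed points of some non-empty Zariski open of $X$.

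With this in hand, fix $\epsilon>0$. Using the continuity of $g$ on $X^{\mathrm{an}}\setminus\Supp(D)^{\mathrm{an}}$ at $\eta_0$, I choose such a $\mathcal V\subseteq X^{\mathrm{an}}\setminus\Supp(D)^{\mathrm{an}}$ with $|g-g(\eta_0)|<\epsilon$ on $\mathcal V$, and let $U\subseteq X\setminus\Supp(D)$ be the associated Zariski open. For the upper bound, take any strict closed $Z\subsetneq X$: irreducibility of $X$ forces $U\setminus Z\neq\emptyset$, and since $X$ is of finite type over a field this open subset contains a closed point $x$; for this $x$ one has $h_{(D,g)}(x)=g(x^{\mathrm{an}})<g(\eta_0)+\epsilon$, so the inner infimum is $\le g(\eta_0)+\epsilon$, and taking the sup over $Z$ then letting $\epsilon\to 0$ gives $\widehat{\mu}_{\mathrm{ess}}(D,g)\le g(\eta_0)$. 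For the lower bound, I simply set $Z:=X\setminus U$, which is a strict closed subset: every closed $x\notin Z$ lies in $U\subseteq X\setminus\Supp(D)$, so $h_{(D,g)}(x)=g(x^{\mathrm{an}})>g(\eta_0)-\epsilon$, yielding $\widehat{\mu}_{\mathrm{ess}}(D,g)\ge g(\eta_0)-\epsilon$, and again $\epsilon\to 0$ closes the argument. The main obstacle in this plan is really the verification of the basic-neighbourhood description in the second paragraph, which is specific to the trivially valued setting; once that is granted, the rest is a routine continuity-plus-density argument.
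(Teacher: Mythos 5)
Your proposal is correct and follows essentially the same strategy as the paper's own proof: both arguments hinge on the observation that, because the absolute value is trivial, one has $|f|(\eta_0)=1=|f|(x^{\mathrm{an}})$ for any regular function $f$ invertible on a Zariski open $V$ and any closed $x\in V$, so every Berkovich neighbourhood of $\eta_0$ captures all closed points of some non-empty Zariski open, and then continuity of $g$ near $\eta_0$ yields both inequalities. Your write-up is slightly more careful than the paper (you allow basic neighbourhoods cut out by finitely many functions $f_1,\ldots,f_k$ rather than a single $f$, and you make the identification $h_{(D,g)}(x)=g(x^{\mathrm{an}})$ explicit), but the mathematical content is the same.
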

\begin{proof}
Let $\alpha$ be a real number, $\alpha>g(\eta_0)$. We consider $g$ as a continuous function on certain $U^{\mathrm{an}}$, where $U$ is a non-empty Zariski open subset of $X$. The set $\{x\in U^{\mathrm{an}}\,:\,g(x)<\alpha\}$ is an open subset of $U^{\mathrm{an}}$ (for the Berkovich topology), which contains the point $\eta_0$. Thus there is a non-empty Zariski open subset $V\subset U$, an invertible regular function $f$ on $V$ and an open subset $A$ of $\mathbb R$ such that 
\[\eta_0\in |f|^{-1}(A)\subset \{x\in U^{\mathrm{an}}\,:\,g(x)<\alpha\}.\]
Note that $|f|(\eta_0)=1$. Moreover, since $f$ is invertible, for any closed point $x\in V$, one has $|f|^{-1}(x)=1$, where we have identified $x$ with the point in $X^{\mathrm{an}}$ corresponding to $x$ and the trivial absolute value on the residue field $\kappa(x)$. Therefore all closed point in $V$ are contained in $\{x\in U^{\mathrm{an}}\,:\,g(x)<\alpha\}$. In other words, the set of closed points of height $\leqslant\alpha$ is dense in $X$. Hence $\widehat{\mu}_{\mathrm{ess}}(D,g)$ is bounded from above by $g(\eta_0)$. 

Conversely, if $\beta$ is a real number such that $\beta<g(\eta_0)$, then $\{x\in U^{\mathrm{an}}\,:\,g(x)>\beta\}$ is also an open subset of $U^{\mathrm{an}}$ which contains the point $\eta_0$. By the same method as above, we obtain the existence of a non-empty Zariski open subset $V\subset U$ such that any closed point $x\in V$ satisfies $g(x)>\beta$. In other words, the set of closed points $y\in X$ such that $g(y)\leqslant\beta$ is contained in the Zariski closed subset $X\setminus V$. Since $\beta$ is arbitrary, we obtain that $\widehat{\mu}_{\mathrm{ess}}(D,g)$ is bounded from below by $g(\eta_0)$. 
\end{proof}

\begin{Remark}
Let $\{x_n\}_{n\in\mathbb N}$ be a sequence of closed points in $X$ which is generic (namely every subsequence of $\{x_n\}_{n\in\mathbb N}$ is Zariski dense in $X$) and such that \[\lim_{n\rightarrow+\infty}h_{(D,g)}(x_n)=\widehat{\mu}_{\mathrm{ess}}(D,g)\]
for certain adelic $\mathbb R$-Cartier divisor $(D,g)$ with $D$ big. For any $n\in\mathbb N$, let $\mu_n$ be the Borel probability measure on $X^{\mathrm{an}}$ defined as the distribution of the average on the Galois orbite of $x_n^{\mathrm{an}}$ (under the action of $\mathrm{Gal}(\overline K/K)$). Then the sequence $\{\mu_n\}_{n\in\mathbb N}$ converges weakly to the Dirac measure on $\eta_0$. This assertion can be deduced from the fact that $\widehat{\mu}_{\mathrm{ess}}$ is a linear form on the vector space of adelic $\mathbb R$-Cartier divisors, by using the technics in \cite[\S5.2]{Chen11a}. Compared to classic equidistribution results in Arakelov geometry such as \cite{Szpiro_Ullmo_Zhang}, or the $p$-adic analogue proved by Chamber-Loir \cite{Chambert-Loir06} (see also \cite{Chambert-Loir10} for a survey on the related problems), the above equidistribution result does not require neither the equality between the essential minimum of the height function and the normalised Arakelov height of the variety, nor any positivity condition on the Green function.
\end{Remark}

\begin{Corollary}
\label{cor:finite:asy:max}
Let $(D,g)$ be an adelic $\mathbb R$-Cartier divisor on $X$. One has $\lambda_{\max}^{\mathrm{asy}}(D,g)\leqslant\widehat{\mu}_{\mathrm{ess}}(D,g)$. In particular, $\lambda_{\max}^{\mathrm{asy}}(D,g)<+\infty$.
\end{Corollary}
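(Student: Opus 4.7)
The plan is to combine Proposition~\ref{Pro:ess min g}, which identifies $\widehat{\mu}_{\mathrm{ess}}(D,g)$ with the value $g(\eta_0)$, with a direct upper bound $\lambda_{\max}(D,g)\leqslant g(\eta_0)$ obtained from the fact that at $\eta_0$ the absolute value is trivial on $\mathrm{Rat}(X)^{\times}$.

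First, I would unwind the definition of $\lambda_{\max}(D,g)$: an element $s\in H^0(D)\setminus\{0\}$ lies in $\mathcal F^t(H^0(D))$ if and only if $\|s\|_g\leqslant e^{-t}$, so that
\[
\lambda_{\max}(D,g)=\sup\bigl\{-\log\|s\|_g\;:\;s\in H^0(D)\setminus\{0\}\bigr\}.
\]
Next, I would evaluate $|s|_g=\mathrm e^{-g+\log|s|}$ at the generic point $\eta_0$. Because the absolute value $|\ndot|_{\eta_0}$ on $\mathrm{Rat}(X)$ is the trivial one, $|s|(\eta_0)=1$ for every $s\in\mathrm{Rat}(X)^{\times}$, and since $\eta_0$ does not lie in the analytification of $\mathrm{Supp}(D)$, $g(\eta_0)$ is a well-defined real number. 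Hence $|s|_g(\eta_0)=\mathrm{e}^{-g(\eta_0)}$, which forces
\[
\|s\|_g=\sup_{x\in X^{\mathrm{an}}}|s|_g(x)\geqslant\mathrm{e}^{-g(\eta_0)},
\]
and therefore $-\log\|s\|_g\leqslant g(\eta_0)$ for every nonzero $s$, giving $\lambda_{\max}(D,g)\leqslant g(\eta_0)$.

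Applying the same inequality to $(nD,ng)$ yields $\lambda_{\max}(nD,ng)\leqslant (ng)(\eta_0)=n\,g(\eta_0)$ for every positive integer $n$. Dividing by $n$ and taking the supremum (or limsup) over $n\geqslant 1$ gives
\[
\lambda_{\max}^{\mathrm{asy}}(D,g)\leqslant g(\eta_0)=\widehat{\mu}_{\mathrm{ess}}(D,g),
\]
where the last equality is exactly Proposition~\ref{Pro:ess min g}. Since $g(\eta_0)\in\mathbb R$, the finiteness assertion $\lambda_{\max}^{\mathrm{asy}}(D,g)<+\infty$ follows immediately.

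There is no real obstacle here: once Proposition~\ref{Pro:ess min g} is available, the whole argument reduces to the trivial-valuation identity $|s|(\eta_0)=1$ for $s\in\mathrm{Rat}(X)^{\times}$, which forces the sup norm of any nonzero section to be bounded below by $\mathrm{e}^{-g(\eta_0)}$.
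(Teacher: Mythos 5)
Your proposal is correct and rests on the same central idea as the paper's proof, namely lower-bounding $\|s\|_g$ by evaluating $|s|_g$ at a point where the trivial valuation forces $|s|=1$. The only difference is where you test: you evaluate at $\eta_0$ and then invoke Proposition~\ref{Pro:ess min g} to rewrite $g(\eta_0)$ as $\widehat{\mu}_{\mathrm{ess}}(D,g)$, whereas the paper evaluates at closed points $x$ outside $\Supp\bigl((s)\bigr)$, compares with $h_{(D,g)}(x)$, and obtains the inequality directly from the definition of $\widehat{\mu}_{\mathrm{ess}}$ (using Proposition~\ref{Pro:ess min g} only for the finiteness claim). Both routes are valid and equally short; yours is marginally cleaner in that it needs a single test point, while the paper's version makes the inequality against $\widehat{\mu}_{\mathrm{ess}}$ visible before the identification $\widehat{\mu}_{\mathrm{ess}}(D,g)=g(\eta_0)$ is brought in.
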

\begin{proof}
Let $n$ be an integer, $n\geqslant 1$. If $s$ is a non-zero element in $H^0(nD)$, then $ng-\log|s|$ defines a continuous function on $X^{\mathrm{an}}$ valued in $\mathbb R\cup\{+\infty\}$. Therefore, for any closed point $x$ of $X$ outside of the support of $(s)$, one has  
$-\log \| s\|_{ng} \leqslant ng(x)$.
Therefore 
$-\log \| s\|_{ng} \leqslant n\widehat{\mu}_{\mathrm{ess}}(D,g)$. The second assertion follows from Proposition \ref{Pro:ess min g}.
\end{proof}

\begin{Remark}
For a subset $S$ of $X^{\mathrm{an}}$, we define $\Supp_{\mathrm{ess}}(S)$ to be
\[
\Supp_{\mathrm{ess}}(S) := \bigcap_{Z \subsetneq X} 
\overline{\{ \xi \in S \mid r_{\xi} \not\in Z \}},
\]
where $Z$ runs over all strict closed subschemes of $X$.
Here we consider 
\[
X_{\leqslant 0}^{\mathrm{an}} := \{ \xi \in X^{\mathrm{an}} \mid h^{\mathrm{an}}_{(D,g)}(\xi) \leqslant 0 \}
\]
as a subset of $X^{\mathrm{an}}$.
If $(D, g) + \widehat{(s)} \geqslant 0$ for some $s \in \Rat(X)^{\times}_{\RR}$,
then
\[
\Supp_{\mathrm{ess}}\left(X_{\leqslant 0}^{\mathrm{an}}\right) \cap \{ \xi \in X^{\mathrm{an}} \mid |s|_{g}(\xi) < 1 \} = \emptyset.
\]
This can be proved in the similar way as \cite[Lemma~2.1]{DsysDirichlet}.
Indeed, we set $Y := \Supp(D + (s))$. It is sufficient to see that
\[
\left\{ \xi \in X_{\leqslant 0}^{\mathrm{an}} \mid r_{\xi} \not\in Y \right\} \subseteq  \{ \xi \in X^{\mathrm{an}} \mid |s|_{g}(\xi)\geqslant 1 \}
\]
because $\{ \xi \in X^{\mathrm{an}} \mid |s|_{g}(\xi) \geqslant 1 \}$ is closed. For $\xi \in X_{\leqslant 0}^{\mathrm{an}}$ with $r_{\xi} \not\in Y$,
we choose a Zariski open set $U$ containing $r_{\xi}$ and a local equation $f$ of $D$ over $U$. 
As
$(g - \log |s|) + \log |f s| = g + \log |f|$ and $|f s|(\xi) = 1$ (because $fs \in (\OO_{X,r_{\xi}}^{\times})_{\RR}$), we have
\[
(g - \log |s|)(\xi) = h^{\mathrm{an}}_{(D, g) + \widehat{(s)}}(\xi) = h^{\mathrm{an}}_{(D, g)}(\xi) \leqslant 0,
\]
which means that $|s|_{g}(\xi) \geqslant 1$, as required. In particular, we have
\[
\bigcap_{Z \subsetneq X} 
\overline{\{ x^{\an} \mid \text{$x \in X \setminus Z$ and $h_{(D,g)}(x) \leqslant 0$}\}}
\cap \{ \xi \in X^{\mathrm{an}} \mid |s|_{g}(\xi) < 1 \} = \emptyset
\]
because $r_{x^{\mathrm{an}}} = x$ for $x \in X$.

\end{Remark}

\begin{Proposition}
\label{prop:lambda:mu:rational:equiv}
Let $X$ be an integral projective scheme over $\Spec K$ and $(D,g)$ be an adelic $\mathbb R$-Cartier divisor on $X$.  For any  $s \in \Rat(X)^{\times}$, we have the following:
\[
\begin{cases}
\lambda_{\max}(D, g) = \lambda_{\max}(D + (s), g - \log |s|),\\
\lambda_{\max}^{\mathrm{asy}}(D, g) = \lambda_{\max}^{\mathrm{asy}}(D + (s), g - \log |s|), \\
\hat{\mu}_{\mathrm{ess}}(D, g) = \hat{\mu}_{\mathrm{ess}}(D + (s), g - \log |s|).
\end{cases}
\]
\end{Proposition}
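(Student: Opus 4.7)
The plan is to reduce all three identities to a single observation: multiplication by $s^{-1}$ induces an isometric isomorphism between the filtered global section spaces attached to $(D,g)$ and to $(D+(s), g-\log|s|)$.

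First I would treat $\lambda_{\max}$. Consider the $K$-linear map
\[
\Phi : H^0(D) \longrightarrow H^0(D+(s)), \qquad t \longmapsto t s^{-1}.
\]
This is well-defined because $(ts^{-1}) + D + (s) = (t) + D \geqslant_{\RR} 0$, and it is clearly a bijection with inverse given by multiplication by $s$. The key computation is that $\Phi$ is an isometry for the sup norms: for every $t \in H^0(D) \setminus \{0\}$ and $x \in X^{\mathrm{an}}$,
\[
|ts^{-1}|_{\,g-\log|s|}(x) \;=\; |t s^{-1}|(x)\,\mathrm{e}^{-(g-\log|s|)(x)} \;=\; |t|(x)\,\mathrm{e}^{-g(x)} \;=\; |t|_g(x),
\]
so that $\|ts^{-1}\|_{g-\log|s|} = \|t\|_g$. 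Consequently $\Phi$ restricts to an isomorphism $\mathcal F^t H^0(D)\xrightarrow{\sim}\mathcal F^t H^0(D+(s))$ for every $t\in\RR$, and taking the supremum of $t$ with nonzero filtered piece yields $\lambda_{\max}(D,g) = \lambda_{\max}(D+(s), g-\log|s|)$.

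Next, applying this identity to $(nD, ng)$ and the rational function $s^n \in \Rat(X)^{\times}$ gives
\[
\lambda_{\max}(nD, ng) = \lambda_{\max}(nD + (s^n), ng - \log|s^n|) = \lambda_{\max}\bigl(n(D+(s)),\,n(g-\log|s|)\bigr),
\]
and dividing by $n$ and passing to the supremum (or $\limsup$) yields the asymptotic statement.

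Finally, for the essential minimum I invoke Proposition~\ref{Pro:ess min g}, which identifies $\hat\mu_{\mathrm{ess}}(D,g)$ with $g(\eta_0)$, where $\eta_0$ is the generic point equipped with the trivial absolute value on $\mathrm{Rat}(X)$. Since $s \in \Rat(X)^{\times}$, one has $|s|(\eta_0) = 1$, hence $\log|s|(\eta_0) = 0$, and therefore
\[
\hat\mu_{\mathrm{ess}}(D+(s), g-\log|s|) \;=\; (g-\log|s|)(\eta_0) \;=\; g(\eta_0) \;=\; \hat\mu_{\mathrm{ess}}(D,g).
\]
There is no serious obstacle here; the only point requiring care is verifying that the putative isometry $\Phi$ preserves the filtration, which boils down to the elementary identity $|ts^{-1}|\,\mathrm{e}^{-(g-\log|s|)} = |t|\,\mathrm{e}^{-g}$ displayed above.
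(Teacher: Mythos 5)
Your proof is correct and matches the paper's approach for the first two equalities: the same isometry $t \mapsto t s^{-1}$ between $(H^0(D), \|\ndot\|_g)$ and $(H^0(D+(s)), \|\ndot\|_{g-\log|s|})$, followed by passing to powers for the asymptotic version. For the essential minimum you take a slightly different (but equally valid) route: the paper deduces it from the invariance of the height function, $h_{(D,g)} = h_{(D+(s),\,g-\log|s|)}$, which is Proposition~\ref{prop:basic:prop:height}(1)--(2), whereas you invoke Proposition~\ref{Pro:ess min g} to reduce the essential minimum to $g(\eta_0)$ and then observe $|s|(\eta_0)=1$. Both arguments are one line long and rest on the same underlying fact that a principal adelic divisor contributes nothing at $\eta_0$; yours is marginally more specific to the trivially valued setting, while the paper's phrasing via height functions would port to other contexts unchanged.
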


\begin{proof}
Note that the isomorphism $H^0(D) \to H^0(D + (s))$ given by $f \mapsto fs^{-1}$
gives rise to an isometry with respect to the norms $\|\ndot\|_g$ and $\|\ndot\|_{g - \log |s|}$,
so that the first assertion follows. The second equation is a consequence of the first one.
The last assertion results from the equality $h_{(D,g)} = h_{(D + (s), g - \log |s|)}$.
\end{proof}

\subsection{Criterion of bigness}
\label{subsec:criterion:bigness}

Let $(D,g)$ be an adelic $\mathbb R$-Cartier divisor on $X$. 
First, let us introduce the volume, the bigness and the pseudo-effectivity of $(D,g)$.
\begin{Definition}
Let $(D,g)$ be an adelic $\mathbb R$-Cartier divisor on $X$. We define the \emph{volume} of $(D,g)$ as
\[\widehat{\vol}(D,g):=\limsup_{n\rightarrow+\infty}\frac{\widehat{\deg}_+(nD,ng)}{n^{d+1}/(d+1)!},\]
where $d$ is the dimension of $X$.
If this number is positive, we say that $(D,g)$ is \emph{big}. 
An adelic $\mathbb R$-Cartier divisor $(D',g')$ is said to be \emph{pseudo-effective} if for any big adelic $\mathbb R$-Cartier divisor $(D,g)$, the sum $(D+D',g+g')$ is a big adelic $\mathbb R$-Cartier divisor. 
\end{Definition}
By definition one has
\[\widehat{\deg}_+(D,g)\leqslant\max(\lambda_{\max}(D,g),0)\mathrm{rk}_K(H^0(D)).\]
Therefore, one has
\begin{equation}\label{Equ:upper bound volume}\widehat{\mathrm{vol}}(D,g)\leqslant (d+1)\max(\lambda_{\max}^{\mathrm{asy}},0)\mathrm{vol}(D).\end{equation}
In particular, $\avol(D, g) < \infty$ by Corollary~\ref{cor:finite:asy:max}. Moreover, if $(D,g)$ is big, then $\vol(D)>0$, namely $D$ is big.

\begin{Proposition}\label{Pro: bigness and lambda}
Let $(D,g)$ be an adelic $\mathbb R$-Cartier divisor. If $(D,g)$ is big, then $\lambda_{\max}^{\mathrm{asy}}(D,g)>0$. The converse is true when $D$ is big. 
In particular, the following conditions are equivalent:
\begin{enumerate}
\renewcommand{\labelenumi}{\rom{(\alph{enumi})}}
\item
$(D, g)$ is big.

\item
$D$ is big and there are a positive integer $n_0$ and a non-zero $s \in H^0(n_0D)$ with
$\| s \|_{n_0g} < 1$.
\end{enumerate}
\end{Proposition}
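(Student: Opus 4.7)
The first assertion (``$(D,g)$ big implies $\lambda_{\max}^{\mathrm{asy}}(D,g)>0$'') is an immediate consequence of inequality~\eqref{Equ:upper bound volume}: from $\widehat{\mathrm{vol}}(D,g)\leqslant(d+1)\max(\lambda_{\max}^{\mathrm{asy}},0)\mathrm{vol}(D)$, positivity of $\widehat{\mathrm{vol}}(D,g)$ forces both $\mathrm{vol}(D)>0$ (so $D$ is big) and $\lambda_{\max}^{\mathrm{asy}}(D,g)>0$. Moreover, since $\lambda_{\max}^{\mathrm{asy}}(D,g)=\sup_{n\geqslant 1}\lambda_{\max}(nD,ng)/n$, the condition $\lambda_{\max}^{\mathrm{asy}}(D,g)>0$ is tautologically equivalent to the existence of an integer $n_0\geqslant 1$ and a non-zero $s\in H^0(n_0 D)$ with $\|s\|_{n_0 g}<1$. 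Hence the only non-trivial content is the converse implication: assuming $D$ is big and that such a section $s$ exists, show that $(D,g)$ is big.

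To this end, set $\epsilon:=-n_0^{-1}\log\|s\|_{n_0 g}>0$. The invariant $\widehat{\mathrm{vol}}$ is preserved by $\mathbb R$-linear equivalence, by the same isometric argument underlying Proposition~\ref{prop:lambda:mu:rational:equiv} (the map $H^0(D)\to H^0(D+(\phi))$, $f\mapsto f\phi^{-1}$, is an isometry of the respective sup-norms). I therefore replace $(D,g)$ by the equivalent pair $(D+n_0^{-1}(s),\,g-n_0^{-1}\log|s|)$. The new Green function equals $-n_0^{-1}\log|s|_{n_0 g}$, which is bounded below by $\epsilon$ because the continuous function $|s|_{n_0 g}$ on $X^{\mathrm{an}}$ satisfies $|s|_{n_0 g}(\xi)\leqslant\|s\|_{n_0 g}=e^{-n_0\epsilon}$; the new divisor $D+n_0^{-1}(s)$ is effective and big. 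Renaming, I am reduced to the case where $D$ is an effective big $\mathbb R$-Cartier divisor and $g\geqslant\epsilon>0$ pointwise on $X^{\mathrm{an}}$. The central technical ingredient is now a uniform growth bound: there exists a constant $C>0$ (depending only on $(D,g)$) such that $\|t\|_{Eg}\leqslant e^{CE}$ for every integer $E\geqslant 1$ and every non-zero $t\in H^0(ED)$. This I obtain by comparing $g$ with the \emph{model} Green function $g_{\mathrm{mod}}$ of $D$, defined locally at each point by $g_{\mathrm{mod}}=-\log|f|$ for $f$ a local $\mathbb R$-equation of $D$. For a point $\xi\in X^{\mathrm{an}}$ with reduction $r_\xi$, choosing such an $f$ near $r_\xi$ yields $|t|_{Eg_{\mathrm{mod}}}(\xi)=|tf^E|(\xi)$; since $(t)+ED\geqslant_{\mathbb R} 0$, the $\mathbb R$-rational function $tf^E$ is a product of regular functions (at $r_\xi$) raised to non-negative real powers, whence $|tf^E|(\xi)\leqslant 1$ by the valuative criterion of properness applied at $r_\xi$. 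Thus $\|t\|_{Eg_{\mathrm{mod}}}\leqslant 1$, and since $g-g_{\mathrm{mod}}$ is a Green function of the trivial divisor and therefore extends to a continuous function on the compact space $X^{\mathrm{an}}$, the claimed bound follows with $C:=\|g-g_{\mathrm{mod}}\|_{\sup}$.

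It remains to produce many small sections of $mD$. Since $D$ is effective, the constant function $1$ lies in $H^0(ED)$ for every $E\geqslant 1$ with $\|1\|_{Eg}=e^{-E\inf g}\leqslant e^{-E\epsilon}$. For any $\alpha\in(C/(C+\epsilon),\,1)$ and large $m$, the tautological inclusion $H^0(\lfloor(1-\alpha)m\rfloor D)\hookrightarrow H^0(mD)$ combined with the submultiplicativity $\|t\|_{mg}\leqslant\|1\|_{\lceil\alpha m\rceil g}\cdot\|t\|_{\lfloor(1-\alpha)m\rfloor g}$ and the uniform bound of the previous paragraph yields
\[\|t\|_{mg}\leqslant e^{-\alpha m\epsilon+(1-\alpha)mC+o(m)},\]
so that a subspace of dimension $(1-\alpha)^d m^d\,\mathrm{vol}(D)/d!+o(m^d)$ of $H^0(mD)$ lies inside $\mathcal F^{m[\alpha\epsilon-(1-\alpha)C]+o(m)}(H^0(mD),mg)$. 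Integrating this lower bound for $\dim\mathcal F^\bullet$ from $0$ to $m[\alpha\epsilon-(1-\alpha)C]+o(m)$ and optimising over $\alpha$ yields
\[\widehat{\mathrm{vol}}(D,g)\;\geqslant\;\frac{d^d\,\epsilon^{d+1}}{(d+1)^d(C+\epsilon)^d}\,\mathrm{vol}(D)\;>\;0,\]
completing the proof. The principal obstacle throughout is securing the uniform growth bound via the model Green function; once it is in place, what remains is an elementary dimension count together with a one-variable optimisation.
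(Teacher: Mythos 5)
Your proof of the first assertion and of the tautological equivalence (b)\,$\Leftrightarrow\lambda_{\max}^{\mathrm{asy}}>0$ matches the paper. For the converse, though, you take a genuinely different route from the authors. The paper invokes the machinery of Boucksom--Chen on $\RR$-filtered graded linear series: since $D$ is big, $V_\sbullet=\bigoplus_n H^0(nD)$ contains an ample series, and by \cite[Lemma~1.6]{BC11} the sub-series $V_\sbullet^t=\bigoplus_n\mathcal F^{nt}(H^0(nD))$ still contains an ample series (hence $\vol(V_\sbullet^t)>0$) for every $t$ with $0\leqslant t<\lambda_{\max}^{\mathrm{asy}}(D,g)$; then \cite[Corollary~1.13]{BC11} gives $\avol(D,g)=(d+1)\int_0^{\lambda_{\max}^{\mathrm{asy}}}\vol(V_\sbullet^t)\,\mathrm{d}t>0$. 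Your argument is a self-contained, explicit version of this: after shifting to an effective big $D$ with $g\geqslant\epsilon>0$, you establish a uniform upper bound $\|t\|_{Eg}\leqslant e^{CE}$ via the model Green function and then harvest small sections by multiplying by powers of the constant section $1$ and optimising a split $m=\lceil\alpha m\rceil+\lfloor(1-\alpha)m\rfloor$. This yields an explicit quantitative lower bound for $\avol(D,g)$, which the paper's black-box argument does not give; what it costs is that the bound is weaker than the exact integral formula, and it implicitly uses the existence and continuity of the model Green function $g_{\mathrm{mod}}$ on a general trivially valued Berkovich space, which the paper never constructs (though it is standard). One small imprecision: replacing $(D,g)$ by $(D+n_0^{-1}(s),g-n_0^{-1}\log|s|)$ quotes the isometry underlying Proposition~\ref{prop:lambda:mu:rational:equiv}, which is stated for $s\in\Rat(X)^{\times}$ only, whereas $s^{1/n_0}\in\Rat(X)^{\times}_{\RR}$; the map $f\mapsto fs^{-1/n_0}$ does not land in $\Rat(X)^{\times}$. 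This is easily repaired by working with $(n_0D+(s),\,n_0g-\log|s|)$ and noting that positivity of $\avol$ along multiples of $n_0$ implies positivity of $\avol(D,g)$, but as written the step is not quite justified.
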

\begin{proof}
By the inequality \eqref{Equ:upper bound volume}, we obtain that, if $(D,g)$ is big, then $\lambda_{\max}^{\mathrm{asy}}(D,g)$ is positive. In the following we prove that, if $D$ is big and $\lambda_{\max}^{\mathrm{asy}}(D,g)>0$, then one has $\widehat{\vol}(D,g)>0$. Let $V_\sbullet$ be the graded linear series $\bigoplus_{n\in\mathbb N}H^0(nD)$. Since $D$ is big, it contains an ample series (see \cite[Definition 1.1]{BC11}). It is moreover $\mathbb R$-filtered. By \cite[Lemma 1.6]{BC11}, for any $t\in\mathbb R$ such that $0\leqslant t<\lambda_{\max}^{\mathrm{asy}}(D,g)$, the graded linear series $V_{\sbullet}^t:=\bigoplus_{n\in\mathbb N}\mathcal F^{nt}(H^0(nD))$ contains an amples series and hence has a positive volume. Moreover, by \cite[Corollary 1.13]{BC11}, one has
\[\widehat{\vol}(D,g)=(d+1)\int_0^{\lambda_{\max}^{\mathrm{asy}}(D,g)}\vol(V_\sbullet^t)\,\mathrm{d}t>0.\]
The proposition is thus proved.
\end{proof}

\begin{Corollary}\label{Cor: pseudo-effectivity}
Let $(D,g)$ be an adelic $\mathbb R$-Cartier divisor on $X$. We assume that $D$ is big. Then $(D,g)$ is pseudo-effective if and only if $\lambda_{\max}^{\mathrm{asy}}(D,g)\geqslant 0$.
\end{Corollary}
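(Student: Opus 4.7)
The plan is to split the biconditional into its two implications, each reduced to Proposition~\ref{Pro: bigness and lambda}. For the sufficient direction, the key input is super-additivity of $\lambda_{\max}^{\mathrm{asy}}$: if $s \in H^0(nD)$ and $s' \in H^0(nD')$ are non-zero, the product $ss'$ lies in $H^0(n(D+D'))$ and satisfies $\|ss'\|_{n(g+g')} \leqslant \|s\|_{ng}\|s'\|_{ng'}$; taking $-\log$ and passing to the asymptotic supremum yields $\lambda_{\max}^{\mathrm{asy}}(D+D', g+g') \geqslant \lambda_{\max}^{\mathrm{asy}}(D,g) + \lambda_{\max}^{\mathrm{asy}}(D',g')$. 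Given $\lambda_{\max}^{\mathrm{asy}}(D,g) \geqslant 0$ and any big $(D',g')$, Proposition~\ref{Pro: bigness and lambda} provides $\lambda_{\max}^{\mathrm{asy}}(D',g') > 0$, so the sum is strictly positive; combined with the bigness of $D+D'$, the converse part of Proposition~\ref{Pro: bigness and lambda} yields $(D+D', g+g')$ big, i.e., pseudo-effectivity of $(D,g)$.

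For the necessary direction, I will fix any big $(A, g_A)$. Since positive scaling preserves bigness, pseudo-effectivity forces $(D + \epsilon A, g + \epsilon g_A)$ to be big for every $\epsilon > 0$, and Proposition~\ref{Pro: bigness and lambda} then gives $\lambda_{\max}^{\mathrm{asy}}(D + \epsilon A, g + \epsilon g_A) > 0$. Combined with Corollary~\ref{cor:finite:asy:max} and Proposition~\ref{Pro:ess min g},
\[
0 < \lambda_{\max}^{\mathrm{asy}}(D + \epsilon A, g + \epsilon g_A) \leqslant \widehat{\mu}_{\mathrm{ess}}(D + \epsilon A, g + \epsilon g_A) = g(\eta_0) + \epsilon\, g_A(\eta_0),
\]
and letting $\epsilon \to 0^+$ yields $g(\eta_0) \geqslant 0$, i.e., $\widehat{\mu}_{\mathrm{ess}}(D,g) \geqslant 0$.

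The main obstacle is to upgrade $\widehat{\mu}_{\mathrm{ess}}(D,g) \geqslant 0$ to the sharper conclusion $\lambda_{\max}^{\mathrm{asy}}(D,g) \geqslant 0$, since Corollary~\ref{cor:finite:asy:max} only provides the inequality $\lambda_{\max}^{\mathrm{asy}} \leqslant \widehat{\mu}_{\mathrm{ess}}$ while super-additivity yields a lower bound on $\lambda_{\max}^{\mathrm{asy}}(D+\epsilon A, g+\epsilon g_A)$ in the wrong direction for a direct limiting argument. To close this gap, I plan to exploit the trivially valued structure: because $|\ndot|$ is trivial on $K$, every non-zero $s \in \mathrm{Rat}(X)^{\times}$ satisfies $|s|(\eta_0) = 1$, forcing $\|s\|_{ng} \geqslant e^{-n g(\eta_0)}$ for any $s \in H^0(nD) \setminus \{0\}$. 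The plan is then to construct, for $D$ big and $n$ large, sections of $nD$ that asymptotically realize this Gauss-type lower bound---by using the bigness of $D$ together with an adapted semipositive metric representative of the class of $(D,g)$ (drawing on the material of Section~2 on plurisubharmonic Green functions) to produce a generic section whose sup-norm is achieved at $\eta_0$---thereby establishing the identity $\lambda_{\max}^{\mathrm{asy}}(D,g) = g(\eta_0)$ for big $D$ and concluding the proof.
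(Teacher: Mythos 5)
Your treatment of the sufficiency direction is exactly the paper's argument: super-additivity of $\lambda_{\max}^{\mathrm{asy}}$ together with both halves of Proposition~\ref{Pro: bigness and lambda}. No issue there.

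There is, however, a genuine gap in your plan for the necessity direction. You correctly derive $g(\eta_0)\geqslant 0$ from pseudo-effectivity, and you correctly identify that upgrading this to $\lambda_{\max}^{\mathrm{asy}}(D,g)\geqslant 0$ is the crux. But the proposed fix rests on the identity $\lambda_{\max}^{\mathrm{asy}}(D,g)=g(\eta_0)$ for big $D$, and that identity is false — even under the additional hypothesis that $(D,g)$ is pseudo-effective. Concretely, take $X=\PP^1_K$, $D=\{T_0=0\}$, $g_0=\log\max\{1,|z|\}$, and $g=g_0+c$, where $c$ is the continuous function on $X^{\an}$ equal to $\max\{0,1-t(\xi)\}$ on every branch $[\eta_0,x]$. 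Then $D$ is big, $g(\eta_0)=1$, and $\mu_{\mathrm{tot}}(g)=1>0$, so $(D,g)$ is pseudo-effective (use Theorem~\ref{Pro:Dirichlet} or \ref{Pro: Dirichlet property and mu tot}). Yet for any non-zero $s\in H^0(nD)$ there are infinitely many closed points $x$ with $\ord_x(s)=0$; on such a branch at $t(\xi)=1$ one has $|s|(\xi)=1$ and $g(\xi)=0$, hence $|s|_{ng}(\xi)=1$ and $\|s\|_{ng}\geqslant 1$. Therefore $\lambda_{\max}^{\mathrm{asy}}(D,g)\leqslant 0<1=g(\eta_0)$ (and in fact equals $0$, taking $s=1$). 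The proposed $\text{``generic section whose sup-norm is achieved at }\eta_0\text{''}$ cannot exist when $g$ dips below $g(\eta_0)$ at points that a bounded-degree section cannot all kill — precisely the obstruction in this example. The identity you want is essentially equivalent to the conclusion, not a lemma that can be proved independently; moreover it equates $\lambda_{\max}^{\mathrm{asy}}$ and $\widehat{\mu}_{\mathrm{ess}}$, whereas Corollary~\ref{cor:finite:asy:max} only gives one inequality and the whole content of the present corollary is that the other inequality needs a different criterion ($\mu_{\mathrm{tot}}$ in dimension one, not $g(\eta_0)$).

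The necessity direction can be proved directly without any such intermediate identity. Suppose $\lambda_{\max}^{\mathrm{asy}}(D,g)=-c<0$. Since $D$ is big and $\lambda_{\max}^{\mathrm{asy}}(D,g+2c)=\lambda_{\max}^{\mathrm{asy}}(D,g)+2c=c>0$, Proposition~\ref{Pro: bigness and lambda} shows $(D,g+2c)$ is big. But $(D,g)+(D,g+2c)=(2D,2g+2c)$ has $\lambda_{\max}^{\mathrm{asy}}(2D,2g+2c)=2\lambda_{\max}^{\mathrm{asy}}(D,g)+2c=0\not>0$ (Fekete's lemma applied to the super-additive sequence $\{\lambda_{\max}(nD,ng)\}$ gives the integral homogeneity $\lambda_{\max}^{\mathrm{asy}}(2D,2g)=2\lambda_{\max}^{\mathrm{asy}}(D,g)$), so by Proposition~\ref{Pro: bigness and lambda} again $(2D,2g+2c)$ is not big, contradicting pseudo-effectivity. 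It is perhaps worth noting that the paper's own proof only writes out the sufficiency direction, so the necessity is also implicitly left to the reader there.
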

\begin{proof}
Assume that $\lambda_{\max}^{\mathrm{asy}}(D,g)\geqslant 0$. Let $(D',g')$ be a big adelic $\mathbb R$-Cartier divisor on $X$. Since $(D',g')$ is big, by Proposition \ref{Pro: bigness and lambda} one has $\lambda_{\max}^{\mathrm{asy}}(D',g')>0$. Therefore
\[\lambda_{\max}^{\mathrm{asy}}(D+D',g+g')\geqslant\lambda_{\max}^{\mathrm{asy}}(D,g)+\lambda_{\max}^{\mathrm{asy}}(D',g')>0.\]
Since $D+D'$ is big, still by Proposition \ref{Pro: bigness and lambda} we obtain that $(D+D',g+g')$ is big.
\end{proof}

\begin{Example}
\label{exam:projective:space:big}
We assume $X = \PP^d_K = \Proj(K[T_0, \ldots, T_d])$. We set $z_i = T_i/T_0$ $(i=0, \ldots, d)$, $D = \{ T_0 = 0 \}$ and
$g = \log \max \{ a_0, a_1|z_1|, \ldots, a_d |z_d| \}$ for $a_0, a_1, \ldots, a_d \in \RR_{>0}$.
Then $g$ is a Green function of $D$, and
\[
\lambda_{\max}^{\mathrm{asy}}(D, g) = \hat{\mu}_{\mathrm{ess}}(D, g) = \log \max \{ a_0, \ldots, a_d \}.
\]
In particular, $(D, g)$ is big (resp. pseudo-effective) if and only if
$\max \{ a_0, \ldots, a_d \} > 1$ (resp. $\max \{ a_0, \ldots, a_d \} \geqslant 1$).

\medskip
Let us see the above facts.
The first assertion is obvious.
Furthermore, by 
Proposition~\ref{Pro:ess min g}, 
\[
\hat{\mu}_{\mathrm{ess}}(D, g) = g(\eta_0) = \log \max \{ a_0, \ldots, a_d \},
\] 
so that,
by 
Corollary~\ref{cor:finite:asy:max}, 
it is sufficient to show that
\[
\log \max \{ a_0, \ldots, a_d \} \leqslant \lambda_{\max}^{\mathrm{asy}}(D, g).
\]

We choose $a_{i_0}$ with $a_{i_0} = \max \{ a_0, \ldots, a_d \}$.
We set $w_i = T_i/T_{i_0}$ $(i=0, \ldots, d)$, $D' = D + (z_{i_0})$ and $g' = g - \log |z_{i_0}|$.
Note that $D' = \{ T_{i_0} = 0 \}$ and 
\[
g' = \log \max \{ a_0 |w_0|,\ldots, a_{i_0-1}|w_{i_0-1}|, a_{i_0}, a_{i_0+1}|w_{i_0+1}|, \ldots, a_d|w_d| \}.
\]
Moreover, by 
Proposition~\ref{prop:lambda:mu:rational:equiv}, 
$\lambda_{\max}^{\mathrm{asy}}(D, g) = \lambda_{\max}^{\mathrm{asy}}(D', g')$.
Therefore, we may assume that $a_0 = \max \{ a_0, \ldots, a_d \}$.

Let us see $\| 1 \|_g = 1/a_0$. Note that
\[
| 1 |_g = \frac{1}{\max \{ a_0, a_1|z_1|, \ldots, a_d|z_d| \}},
\]
so that, as $\max \{ a_0, a_1|z_1|, \ldots, a_d|z_d| \} \geqslant a_0$, we have $| 1 |_g \leqslant 1/a_0$ on 
$X^{\mathrm{an}}$.  Furthermore $| 1 |_g(\eta_0) = 1/a_0$, as desired.

The above observation shows 
$1 \in \mathcal{F}^{\log a_0}(H^0(D))$. Thus, $\log a_0 \leqslant \lambda_{\max}(D, g)$, that is, 
$\log a_0 \leqslant  \lambda_{\max}^{\mathrm{asy}}(D, g)$, as required.

\medskip
Let us consider a natural homomorphism
$\alpha : K^{d+1} \otimes \OO_{\PP^d_K} \to \OO_{\PP^d_K}(1)$ given by $\alpha(e_i) = T_i$ and
a norm $\|\ndot\|$ on $K^{d+1}$ given by 
\[
\| (x_0, \ldots, x_n) \| = \max \{ (1/a_0)|x_0|, \ldots, (1/a_d)|x_d| \}.
\]
Then the Green function $g$ is induced by the quotient metric of $\OO_{\PP^d_K}(1)$ by
$\alpha$ and $\|\ndot\|$.
\end{Example}

\subsection{Algebraic dynamic system over a trivially valued field}
Let $f : X \to X$ be a surjective endomorphism of an integral projective scheme over a trivially valued field $K$.
Let $D$ be an $\RR$-Cartier divisor on $X$ such that
$f^*(D) = dD + (\varphi)$ for some $d \in \RR_{>1}$ and $\varphi \in \Rat(X)^{\times}_{\RR}$. 
By Proposition~\ref{prop:unique:existence:canonical:Green:function},
we can see that there is a unique Green function $g$ of $D$ such that
$f^*(D, g) = d(D, g) + \widehat{(\varphi)}$.
Then we have the following:

\begin{Proposition}
\label{prop:height:dynamic:system}
\begin{enumerate}
\renewcommand{\labelenumi}{\rom{(\arabic{enumi})}}
\item
$h^{\mathrm{an}}_{(D, g)}(f^{\mathrm{an}}(\xi)) = d h^{\mathrm{an}}_{(D, g)}(\xi)$ for all $\xi \in X^{\mathrm{an}}$.

\item For $\xi \in X^{\mathrm{an}}$, if $(f^{\mathrm{an}})^n(\xi) =  (f^{\mathrm{an}})^m(\xi)$ for some
integers $n, m$ with $0 \leqslant n < m$,
then $h^{\mathrm{an}}_{(D, g)}(\xi) = 0$.

\item $h^{\mathrm{an}}_{(D, g)}(\eta_0) = 0$, that is, $g(\eta_0) = 0$.

\item
If $D$ is semiample, then $h^{\mathrm{an}}_{(D, g)}(\xi) \geqslant 0$ for all $\xi \in X^{\mathrm{an}}$.

\item
If $D +(s)$ is effective for some $s \in \Rat(X)^{\times}_{\RR}$,
then, for any $\epsilon > 0$, there is $\psi_{\epsilon} \in \Rat(X)^{\times}_{\RR}$ such that
$(D, g+\epsilon) + \widehat{(\psi_{\epsilon})}$ is effective.
\end{enumerate}
\end{Proposition}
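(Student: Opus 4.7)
The plan is to attack the five assertions in the stated order, leaning on the defining functional equation $f^*(D,g) = d(D,g) + \widehat{(\varphi)}$ together with the basic properties of the height function established in Proposition~\ref{prop:basic:prop:height} and the uniform-convergence construction of the canonical Green function from Subsection~\ref{subsec:canonical:green:function}.

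For (1), I would apply the functoriality of heights (Proposition~\ref{prop:basic:prop:height}(3)) to the morphism $f$, obtaining
\[
h^{\mathrm{an}}_{(D,g)}(f^{\mathrm{an}}(\xi)) = h^{\mathrm{an}}_{f^*(D,g)}(\xi) = h^{\mathrm{an}}_{d(D,g) + \widehat{(\varphi)}}(\xi) = d\cdot h^{\mathrm{an}}_{(D,g)}(\xi),
\]
using the linearity in (1) of that proposition together with the vanishing on principal divisors from (2). Part (2) is then immediate by iterating (1): the hypothesis yields $d^n h = d^m h$ for $h = h^{\mathrm{an}}_{(D,g)}(\xi)$, which forces $h = 0$ since $d > 1$. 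For (3), the key point is that $\eta_0$ is fixed by $f^{\mathrm{an}}$: indeed $f$ is surjective hence dominant, so it sends the generic point of $X$ to itself, and the trivial absolute value on $\mathrm{Rat}(X)$ pulls back to itself under the induced field homomorphism. Applying (1) at $\xi = \eta_0$ then gives $(d-1) h^{\mathrm{an}}_{(D,g)}(\eta_0) = 0$. The identification $h^{\mathrm{an}}_{(D,g)}(\eta_0) = g(\eta_0)$ follows because the reduction point $r_{\eta_0}$ is the generic point of $X$, and any local equation $\tilde f$ of $D$ near it satisfies $|\tilde f|(\eta_0) = 1$ since the residual absolute value is trivial.

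For (4), I would combine (1) with the semiample lower bound Proposition~\ref{prop:basic:prop:height}(4): there is a constant $C$ with $h^{\mathrm{an}}_{(D,g)} \geqslant C$ on $X^{\mathrm{an}}$, and if some $\xi_0$ had $h^{\mathrm{an}}_{(D,g)}(\xi_0) < 0$, then $h^{\mathrm{an}}_{(D,g)}((f^{\mathrm{an}})^n(\xi_0)) = d^n h^{\mathrm{an}}_{(D,g)}(\xi_0) \to -\infty$ as $n \to \infty$, contradicting this lower bound.

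Part (5) is the real substance, and the plan is to mirror the proof of Proposition~\ref{prop:canonical:Dirichlet} in the trivially valued setting. First, by Lemma~\ref{lem:canonical:Green:principal} the canonical Green function of $D+(s)$ is $g - \log|s|$, so replacing $(D, g)$ by $(D+(s), g - \log|s|)$ I may assume $D$ is effective (the desired $\psi_\epsilon$ for the original $(D,g)$ is then $s$ times the one produced below). By Proposition~\ref{prop:exist:Green} and Proposition~\ref{Pro:e-gextension} I choose an initial Green function $g_0 \geqslant 0$ of $D$, and form the sequences $\{g_n\}$ and $\{\varphi_n\}$ of Subsection~\ref{subsec:canonical:green:function}. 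Lemma~\ref{lem:recursive:relations}(2) yields $D + (\varphi_n) \geqslant 0$ and $g_n - \log|\varphi_n| \geqslant 0$, while Lemma~\ref{lem:uniform:conv} shows that $h_n := g_n - g_0$ converges uniformly to $h := g - g_0$. Given $\epsilon > 0$, I pick $n$ with $\|h_n - h\|_{\sup} \leqslant \epsilon$: then $g_n \leqslant g + \epsilon$, so $(g+\epsilon) - \log|\varphi_n| \geqslant g_n - \log|\varphi_n| \geqslant 0$, and $\psi_\epsilon := \varphi_n$ works. The main (though rather mild) obstacle is the bookkeeping in this final step, namely verifying that the $\epsilon$ ends up on the correct side of each inequality and that the initial replacement by $s$ unwinds cleanly; the conceptual input is identical to the one used in Proposition~\ref{prop:canonical:Dirichlet}.
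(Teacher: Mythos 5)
Your proof is correct and follows essentially the same path as the paper: (1) via functoriality and linearity of heights, (2)--(4) via iterating (1) and the semiample lower bound, and (5) via the canonical-Green-function approximation. The only difference is that for (5) the paper simply cites Proposition~\ref{prop:canonical:Dirichlet} (which was already proved in the generality of any complete non-archimedean absolute value, trivial included), whereas you re-derive its argument; both are valid.
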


\begin{proof}
(1) Indeed, by Proposition~\ref{prop:basic:prop:height},
\[
h^{\mathrm{an}}_{(D, g)}(f^{\mathrm{an}}(\xi)) = h^{\mathrm{an}}_{f^*(D, g)}(\xi) = h^{\mathrm{an}}_{d(D, g) + \widehat{(\varphi)}}(\xi)
= d h^{\mathrm{an}}_{(D, g)}(\xi).
\]

\smallskip
(2) By virtue of (1), \[
d^n h^{\mathrm{an}}_{(D, g)}(\xi) = h^{\mathrm{an}}_{(D, g)}((f^{\mathrm{an}})^n(\xi))
= h^{\mathrm{an}}_{(D, g)}((f^{\mathrm{an}})^m(\xi)) = d^m h^{\mathrm{an}}_{(D, g)}(\xi),
\]
and hence the assertion follows.

\smallskip
(3) is a consequence of (2) because $f^{\mathrm{an}}(\eta_0) = \eta_0$.

\smallskip
(4) By Proposition~\ref{prop:basic:prop:height}, there is a constant $C$ such that
$h^{\mathrm{an}}_{(D, g)}(\xi) \geqslant C$ for all $\xi \in X^{\mathrm{an}}$.
In particular, $h^{\mathrm{an}}_{(D, g)}((f^{\mathrm{an}})^n(\xi)) \geqslant C$, that is,
by (1), $h^{\mathrm{an}}_{(D, g)}(\xi) \geqslant C/d^n$ for all $n > 0$. Thus the assertion follows.

\smallskip
(5) follows from Proposition~\ref{prop:canonical:Dirichlet}.
\end{proof}

\section{the Dirichlet property over a trivially valued field}
In this section, we study the Dirichlet property in the setting of Arakelov geometry over a trivially valued field. We let $K$ be a field and $|\ndot|$ be the trivial absolute value on $K$.
Let $X$ be an integral projective scheme over $\Spec K$.

\begin{Definition}
Let $(D,g)$ be an adelic $\mathbb R$-Cartier divisor on $X$. We say that $(D,g)$ \emph{satisfies the Dirichlet property} if it is $\mathbb R$-linearly equivalent to an effective adelic $\mathbb R$-Cartier divisor (see Definition \ref{Def:principal} and Remark \ref{Rem:effective}).
\end{Definition}

\begin{Proposition}\label{Pro: pseudo-effectivity}
Let $(D,g)$ be an adelic $\mathbb R$-Cartier divisor on $X$. We assume that the $\mathbb R$-Cartier divisor $D$ is big. Then $(D,g)$ is pseudo-effective if and only if $(D,g+\varepsilon)$ satisfies the Dirichlet property for any $\varepsilon>0$.
\end{Proposition}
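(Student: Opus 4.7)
The natural strategy is to translate both conditions into estimates for $\lambda_{\max}^{\mathrm{asy}}(D,g)$ via Corollary~\ref{Cor: pseudo-effectivity} and Proposition~\ref{Pro: bigness and lambda}, noting that the Dirichlet property for $(D,g+\varepsilon)$ is nothing but the existence of an $\RR$-rational function $s\in\Rat(X)^{\times}_{\RR}$ satisfying $D+(s)\geqslant_{\RR}0$ and $\|s\|_g\leqslant e^{\varepsilon}$.

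For the direct implication, assume $(D,g)$ is pseudo-effective. By Corollary~\ref{Cor: pseudo-effectivity} we have $\lambda_{\max}^{\mathrm{asy}}(D,g)\geqslant 0$, and by super-additivity this equals $\sup_{n\geqslant 1}\lambda_{\max}(nD,ng)/n$. Given $\varepsilon>0$, choose $n\geqslant 1$ and a non-zero $\phi\in H^0(nD)$ with $\|\phi\|_{ng}<e^{n\varepsilon}$, and set $s:=\phi^{1/n}\in\Rat(X)^{\times}_{\RR}$. A direct check gives $D+(s)=(1/n)(nD+(\phi))\geqslant_{\RR}0$ and $\|s\|_g=\|\phi\|_{ng}^{1/n}<e^{\varepsilon}$, whence $(D,g+\varepsilon)+\widehat{(s)}\geqslant 0$ and the Dirichlet property holds.

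For the converse, suppose $(D,g+\varepsilon)$ satisfies the Dirichlet property for every $\varepsilon>0$, and let $(D',g')$ be an arbitrary big adelic $\RR$-Cartier divisor; it suffices to prove that $(D+D',g+g')$ is big. By Proposition~\ref{Pro: bigness and lambda} pick $\delta>0$ with $\lambda_{\max}^{\mathrm{asy}}(D',g')>\delta$, so that $(D',g'-\delta)$ is still big. Applying the hypothesis to $\varepsilon=\delta/2$ yields an effective $\overline E$ with $(D,g+\delta/2)\sim_{\RR}\overline E$, whence
\begin{equation*}
(D+D',g+g'-\delta/2)=(D,g+\delta/2)+(D',g'-\delta)\sim_{\RR}\overline E+(D',g'-\delta).
\end{equation*}
The right-hand side is big: writing $\overline E=(E,g_E)$ with $E\geqslant 0$ and $g_E\geqslant 0$, any $\psi\in H^0(nD')\setminus\{0\}$ also lies in $H^0(n(E+D'))\setminus\{0\}$ since $nE$ is effective, and $g_E\geqslant 0$ forces $\|\psi\|_{n(g_E+g'-\delta)}\leqslant\|\psi\|_{n(g'-\delta)}$, giving $\lambda_{\max}^{\mathrm{asy}}(E+D',g_E+g'-\delta)\geqslant\lambda_{\max}^{\mathrm{asy}}(D',g'-\delta)>0$; since $E+D'$ is big (as $H^0(n(E+D'))\supseteq H^0(nD')$ forces $\vol(E+D')\geqslant\vol(D')>0$), Proposition~\ref{Pro: bigness and lambda} yields bigness of $\overline E+(D',g'-\delta)$. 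Hence $(D+D',g+g'-\delta/2)$ is big, so $\lambda_{\max}^{\mathrm{asy}}(D+D',g+g')=\lambda_{\max}^{\mathrm{asy}}(D+D',g+g'-\delta/2)+\delta/2>\delta/2>0$; one last application of Proposition~\ref{Pro: bigness and lambda} together with the bigness of $D+D'$ concludes.

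The main subtlety lies in the converse direction: one cannot simply let $\varepsilon\to 0$ in the trivially big divisor $(D+D',g+g'+\varepsilon)$, since decreasing a Green function by a positive constant need not preserve bigness in the limit. The trick is to borrow a positive reserve $\delta$ from $\lambda_{\max}^{\mathrm{asy}}(D',g')$ in order to absorb the $\varepsilon$-defect \emph{before} passing to the limit, and this is the single step in which bigness of $(D',g')$, rather than mere pseudo-effectivity, is essential.
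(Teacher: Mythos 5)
Your forward direction is essentially the paper's argument, only unwound: the paper passes through Corollary \ref{Cor: pseudo-effectivity} and Proposition \ref{Pro: bigness and lambda} to conclude that $(D,g+\varepsilon)$ is \emph{big} and hence Dirichlet, whereas you extract the small section $\phi\in H^0(nD)$ directly and set $s=\phi^{1/n}$. Both routes are fine.

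The converse is where you take a genuinely different path, and it is considerably heavier than necessary. The paper's converse is three lines: the Dirichlet property for $(D,g+\varepsilon)$ implies its pseudo-effectivity (a known implication, stated in the introduction), hence $\lambda_{\max}^{\mathrm{asy}}(D,g)+\varepsilon\geqslant 0$ by Corollary \ref{Cor: pseudo-effectivity}; letting $\varepsilon\to 0$ gives $\lambda_{\max}^{\mathrm{asy}}(D,g)\geqslant 0$, and one more application of Corollary \ref{Cor: pseudo-effectivity} finishes. Contrary to the ``subtlety'' you flag at the end, the paper \emph{does} take a limit in $\varepsilon$; the point is that it does so at the level of the numerical invariant $\lambda_{\max}^{\mathrm{asy}}$, which is affine in the constant $\varepsilon$, and both bigness and pseudo-effectivity are exactly the open and closed conditions $\lambda_{\max}^{\mathrm{asy}}>0$ and $\lambda_{\max}^{\mathrm{asy}}\geqslant 0$ once $D$ is big, so there is no loss in the limit. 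Your alternative --- unwinding the definition of pseudo-effectivity and bounding $\overline E+(D',g'-\delta)$ section-by-section --- works, but at one step it silently uses that bigness is preserved when passing from $\overline E+(D',g'-\delta)$ to its $\RR$-linear-equivalence class representative $(D+D',g+g'-\delta/2)$, with the twisting function $s$ lying in $\Rat(X)^{\times}_{\RR}$ rather than $\Rat(X)^{\times}$. Proposition \ref{prop:lambda:mu:rational:equiv} is only stated for $s\in\Rat(X)^{\times}$, and the isometry in its proof does not literally apply when $s$ has irrational exponents, because the spaces $H^0(n(D+(s)))$ and $H^0(nD)$ are then genuinely different $K$-vector spaces. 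This invariance is true (and is also implicit in the paper's appeal to ``Dirichlet implies pseudo-effective''), but if you want a self-contained converse it should be justified, for instance by approximating $s$ with $\QQ$-rational functions and using continuity, or simply by deferring to the standard implication as the paper does. In short: your argument is correct in outcome but is longer, reproves a piece of the pseudo-effectivity implication in an \emph{ad hoc} way, and leans on an unproved invariance statement; the paper's $\lambda_{\max}^{\mathrm{asy}}$-based shortcut is both cleaner and avoids that dependence.
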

\begin{proof}
Suppose that $(D,g)$ is pseudo-effective. By Corollary \ref{Cor: pseudo-effectivity} one has \[\lambda_{\max}^{\mathrm{asy}}(D,g)\geqslant 0.\]  Then for $\varepsilon>0$ one has \[\lambda_{\max}^{\mathrm{asy}}(D,g+\varepsilon)=\lambda_{\max}^{\mathrm{asy}}(D,g)+\varepsilon>0.\]
Therefore, by Proposition \ref{Pro: bigness and lambda}, we obtain that $(D,g+\varepsilon)$ is big. Therefore $(D,g+\varepsilon)$ satisfies the Dirichlet property.

Conversely, if $(D,g+\varepsilon)$ satisfies the Dirichlet property, then $(D,g+\varepsilon)$ is pseudo-effective, and hence by Corollary \ref{Cor: pseudo-effectivity} we obtain that $\lambda_{\max}^{\mathrm{asy}}(D,g+\varepsilon)=\lambda_{\max}^{\mathrm{asy}}(D,g)+\varepsilon\geqslant 0$. Therefore, if $(D,g+\varepsilon)$ satisfies the Dirichlet property for any $\varepsilon>0$, then one has $\lambda_{\max}^{\mathrm{asy}}(D,g)\geqslant 0$. By Corollary \ref{Cor: pseudo-effectivity} we obtain that $(D,g)$ is pseudo-effective.
\end{proof}

\begin{Definition}
We say that {\em the rank of $\Pic(X)$ is one} if $\dim_{\QQ} \Pic_{\QQ}(X) = 1$ (or equivalently
$\dim_{\RR} \Pic_{\RR}(X)  = 1$). In other words,
there is a $\KK$-Cartier divisor $A$ on $X$ such that, for any $\KK$-Cartier divisor $D$ on $X$,
we can find $a \in \KK$ and $\varphi \in \Rat(X)^{\times}_{\KK}$ with
$D = a A + (\varphi)$, where $\KK$ is either $\QQ$ or $\RR$. 
Obviously, $A$ can be taken as an ample Cartier divisor.
\end{Definition}

\begin{Proposition}
\label{prop:equiv:Pic:one}
We assume that $\dim X = 1$. Then the following are equivalent:
\begin{enumerate}
\renewcommand{\labelenumi}{(\arabic{enumi})}
\item
The rank of $\Pic(X)$ is one.

\item
Every element of $\Pic^0(X)$ is of finite order.
\end{enumerate}
\end{Proposition}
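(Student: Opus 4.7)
The plan is to exploit the degree homomorphism on $\Pic(X)$ and reduce the equivalence to the elementary fact that an abelian group is torsion if and only if its tensor product with $\QQ$ vanishes.

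First, I would introduce the degree homomorphism $\deg\colon \Pic(X)\to\ZZ$, defined by $\deg(L):=\chi(X,L)-\chi(X,\OO_X)$ for a line bundle $L$ on $X$. This is additive in $L$ and hence is a group homomorphism. Since $X$ is projective integral of dimension one, there exists an ample line bundle $L_0$ (for instance coming from an embedding of $X$ into a projective space), and $\deg(L_0)>0$ by asymptotic Riemann--Roch. Consequently the image of $\deg$ is a nonzero subgroup of $\ZZ$, say $d\ZZ$ with $d\geqslant 1$. Writing $\Pic^0(X):=\Ker(\deg)$, we obtain the short exact sequence
\[
0\longrightarrow \Pic^0(X)\longrightarrow \Pic(X)\longrightarrow d\ZZ\longrightarrow 0.
\]

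Next, I would tensor this sequence with $\QQ$ over $\ZZ$. Since $\QQ$ is $\ZZ$-flat, exactness is preserved and we get
\[
0\longrightarrow \Pic^0(X)\otimes_{\ZZ}\QQ\longrightarrow \Pic_{\QQ}(X)\longrightarrow \QQ\longrightarrow 0.
\]
Hence $\dim_{\QQ}\Pic_{\QQ}(X)=1+\dim_{\QQ}\bigl(\Pic^0(X)\otimes_{\ZZ}\QQ\bigr)$, so condition~(1) is equivalent to $\Pic^0(X)\otimes_{\ZZ}\QQ=0$.

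Finally, I would invoke the elementary fact that an abelian group $G$ satisfies $G\otimes_{\ZZ}\QQ=0$ if and only if $G$ is a torsion group, i.e., every element of $G$ has finite order. Applying this to $G=\Pic^0(X)$ gives the equivalence of (1) and (2). There is no significant obstacle; the only minor point of care is to ensure that the degree map is well defined on $\Pic(X)$ without any regularity assumption on $X$, which the Euler-characteristic definition handles since $X$ is merely assumed to be projective and integral.
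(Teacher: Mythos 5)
Your proof is correct, and it takes a noticeably different route from the paper's. You package the argument into the short exact sequence $0\to\Pic^0(X)\to\Pic(X)\xrightarrow{\deg} d\ZZ\to 0$, tensor with the flat $\ZZ$-module $\QQ$, and then quote the elementary fact that an abelian group is torsion iff it dies under $\otimes_{\ZZ}\QQ$. The paper instead fixes an ample Cartier divisor $A$ and argues by hand: for $(1)\Rightarrow(2)$ it writes a degree-zero Cartier divisor as $aA+(\varphi)$ with $a\in\QQ$, uses $\deg A>0$ to force $a=0$, and clears denominators in $\varphi$; for $(2)\Rightarrow(1)$ it forms $H:=\deg(A)D-\deg(D)A$, observes $\deg H=0$, and uses the torsion hypothesis to render a suitable multiple of $H$ principal, thereby solving for $D$ in terms of $A$ and an $\RR$-principal divisor. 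Your version is more structural and shorter, and it isolates precisely where each hypothesis enters ($\deg$ is nonzero thanks to an ample class; flatness of $\QQ$ gives the dimension count; torsion is $\QQ$-acyclicity). The paper's version avoids invoking the snake of homological algebra in favour of an explicit element-level manipulation, which is harmless but longer. One point you handle more carefully than the paper: you make explicit that $\deg$ is well defined on an arbitrary integral projective curve (no regularity needed) via $\deg(L)=\chi(X,L)-\chi(X,\OO_X)$, whereas the paper uses $\deg$ without comment.
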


\begin{proof}
We fix an ample Cartier divisor $A$ on $X$.

(1) $\Longrightarrow$ (2): For a Cartier divisor $D$ on $X$ with $\deg(D) = 0$,
there are $a \in \QQ$ and $\varphi \in \Rat(X)^{\times}_{\QQ}$ such that
$D=aA + (\varphi)$. Note that $a= 0$ because $\deg(D) = 0$ and $\deg(A) > 0$.
We choose a positive integer $n$ such that
$\varphi^n  \in \Rat(X)^{\times}$, so that $nD \in \PDiv(X)$, as required.

(2) $\Longrightarrow$ (1): Let $D$ be a $\QQ$-Cartier divisor on $X$.
If we set $H:= \deg(A)D - \deg(D) A$, then $\deg(H) = 0$.
Let $n_0$ be a positive integer such that $n_0D$ is a Cartier divisor.
Then, as $\OO_X(n_0H)$ yields an element of $\Pic^0(X)$, we can find a positive integer
$n_1$ such that $n_1n_0H = (f)$ for some $f \in \Rat(X)^{\times}$, so that
\[
D = \frac{\deg(D)}{\deg(A)} A + \frac{1}{n_1n_0\deg(A)}(f),
\]
as desired.
\end{proof}

\begin{Remark}
If either $X = \PP^1_K$ or $K$ is the algebraic closure of a finite field,
then the above condition on $\Pic^0(X)$ is satisfied.
\end{Remark}

\subsection{Adelic $\mathbb R$-Cartier divisors on curves}

In the rest of the section, we assume that $X$ is a regular projective curve over $\Spec K$ such that the rank of $\Pic(X)$ is one. 
Under the above definition, for any $\mathbb R$-Cartier divisor $D$ on $X$ with
$\deg(D)=0$, there exists an element $f\in\mathrm{Rat}(X)^{\times}_{\mathbb R}$ such that $D=(f)$. 
Indeed, if $A$ is an ample Cartier divisor on $X$, then there are $a \in \RR$ and $f \in \Rat(X)^{\times}_{\RR}$
such that $D = a A + (f)$. Since $\deg(D) = 0$, we have $a = 0$, as required.

Recall that the Berkovich space associated with $X$ can be illustrated by an infinite tree $X^{\mathrm{an}}$, where the root vertex $\eta_0$ corresponds to the generic point $\eta$ of $X$ with the trivial absolute value on the field $\mathrm{Rat}(X)$ of rational functions. The leaves are indexed by closed points of $X$.

\begin{center}
\begin{tikzpicture}
\filldraw(0,1) circle (2pt) node[align=center, above]{$\eta_0$};
\filldraw(-3,0) circle (2pt) ;
\draw (-1,0) node{$\cdots$};
\filldraw(-2,0) circle (2pt) ;
\filldraw(-0,0) circle (2pt) node[align=center, below]{$x$} ;
\filldraw(1,0) circle (2pt) ;
\draw (2,0) node{$\cdots$};
\filldraw(3,0) circle (2pt) ;
\draw (0,1) -- (0,0);
\draw (0,1) -- (-3,0);
\draw (0,1) -- (1,0);
\draw (0,1) -- (-2,0);
\draw (0,1) -- (3,0);
\end{tikzpicture}
\end{center}

Let $x$ be a closed point of $X$. We parametrise the branch linking $\eta_0$ and $x$ by $t\in[0,+\infty]$, where $t=0$ correspond to the point $\eta_0$; the point $t=+\infty$ correspond to the point $x$ with the trivial absolute value on the residue field $\kappa(x)$, and any  $t\in \,]0,+\infty[$ corresponds to the generic point $\eta$ with the following absolute value on $\mathrm{Rat}(X)$:  \[|\ndot|_{x,t}=\mathrm{e}^{-t\,\mathrm{ord}_x(\cdot)},\]
where $\mathrm{ord}_x(\cdot)$ is the discrete valuation on $\mathrm{Rat}(X)$ corresponding to $x$. 

The topology on each branch identifies with the usual topology on $[0,+\infty]$ by this parametrisation and hence each branch is compact. However, any open neighbourhood of $\eta_0$ in $X^{\mathrm{an}}$ contains all but a finite number of branches. 
Namely, a subset $U$ of $X^{\mathrm{an}}$ is open if and only if
the following conditions are satisfied:
\begin{enumerate}
\renewcommand{\labelenumi}{(\roman{enumi})}
\item
$U \cap [\eta_0, x]$ is open for all closed points $x$ of $X$.

\item
If $\eta_0 \in U$, then $[\eta_0, x] \subseteq U$ for all but a finitely many closed points $x$ of $X$.
\end{enumerate}
Note that $X^{\mathrm{an}}$ is compact with this topology.

If $s$ is a non-zero rational function on $X$, on the interval $[\eta_0,x]$ one has \begin{equation}\label{Equ:1}-\log|s|(\xi)=t(\xi)\ord_x(s)\in[-\infty,+\infty],\quad \xi\in[\eta_0,x].\end{equation}
This function is linear on each branch $[\eta_0,x[$ with respect to the parametrisation $t$.

\subsection{Numerical criteria of pseudo-effectivity and Dirichlet property}
In this subsection, we consider a numerical criterion of pseudo-effectivity and the Dirichlet property.

\begin{Definition}
Let $(D,g)$ be an adelic $\mathbb R$-Cartier divisor on $X$. We consider $g$ as a continuous map from $X^{\mathrm{an}}$ to $[-\infty,+\infty]$. For any closed point $x$ of $X$, we define
\[\mu_x(g)=\inf_{\xi\in\,]\eta_0,x[}\frac{g(\xi)}{t(\xi)}\in \mathbb R.\]
Clearly $\mu_x(g)\geqslant 0$ if and only if the function $g$ is bounded from below by $0$ on $]\eta_0,x[$.
\end{Definition}

\begin{Proposition}
Let $(D,g)$ be an adelic $\mathbb R$-Cartier divisor on $X$. For all but finitely many closed point $x$ in $X$, one has $\mu_x(g)\leqslant 0$.
\end{Proposition}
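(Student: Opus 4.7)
The plan is to reduce the statement to the observation that $g$ is continuous on the whole branch $[\eta_0,x]$ whenever $x$ avoids the (finite) support of $D$, and then exploit the fact that $t(\xi)\to +\infty$ as $\xi$ approaches the leaf $x^{\mathrm{an}}$.

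First I would use that $X$ is a regular projective curve: any $\RR$-Cartier divisor $D$ is then a finite $\RR$-linear combination of closed points, so $\Supp(D)$ is a finite set and it suffices to verify $\mu_x(g)\leqslant 0$ for every closed point $x\in X\setminus\Supp(D)$. For such an $x$, I would choose a Zariski open neighbourhood $U\subseteq X$ of $x$ that avoids $\Supp(D)\setminus\{x\}$; since $x\notin\Supp(D)$, on $U$ the divisor $D$ admits a local equation $f\in\Rat(X)^{\times}_{\RR}$ which is a unit at $x$, and up to shrinking $U$ we may assume $f$ is a unit on the whole of $U$. By Definition~\ref{Def:Green functions} the element $g+\log|f|\in\widehat{C}^0(X^{\mathrm{an}})$ extends to a continuous function on $U^{\mathrm{an}}$. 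Because $f$ is invertible on $U$, the function $|f|$ is continuous and strictly positive on $U^{\mathrm{an}}$, so $\log|f|$ is continuous on $U^{\mathrm{an}}$, and therefore $g$ itself extends to a continuous function on $U^{\mathrm{an}}$.

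The next step is to observe that the entire branch $[\eta_0,x]$ is contained in $U^{\mathrm{an}}$: each of its points specialises under the specification map to either the generic point $\eta$ (automatically in $U$) or to $x\in U$. This branch is compact in the Berkovich topology (being homeomorphic to $[0,+\infty]$ via the parametrisation $t$), so $g$ is bounded on it, say $|g|\leqslant M$. For $\xi\in\,]\eta_0,x[$ tending to $x$ one has $t(\xi)\to +\infty$, hence
\[
\frac{g(\xi)}{t(\xi)}\leqslant \frac{M}{t(\xi)}\xrightarrow[\xi\to x]{} 0,
\]
and taking the infimum over $\xi\in\,]\eta_0,x[$ yields $\mu_x(g)\leqslant 0$, as required.

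The only mild subtlety in the argument is ensuring continuity of $g$ across the leaf $x^{\mathrm{an}}$ itself, but this is harmless: the absolute value at $x^{\mathrm{an}}$ is the trivial absolute value on $\kappa(x)$, so $|f|(x^{\mathrm{an}})=1$ for any unit $f$ at $x$, and therefore $\log|f|$ remains continuous all the way to the end of the branch. No further obstacle is anticipated.
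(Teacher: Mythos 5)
Your proof is correct and follows essentially the same strategy as the paper's: pick a Zariski open $U$ on which $g$ extends to a (bounded, since $X^{\mathrm{an}}$ and hence each branch is compact) continuous function, and conclude $\mu_x(g)\leqslant 0$ for the cofinitely many closed points $x\in U$ because $t(\xi)\to+\infty$ along the branch. You have merely spelled out in more detail the well-known fact, used implicitly in the paper, that a Green function of $D$ extends continuously over $(X\setminus\Supp D)^{\mathrm{an}}$.
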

\begin{proof}
Since $g$ is a Green function, it extends to a continuous real-valued function on some $U^{\mathrm{an}}$, where $U$ is a non-empty Zariski open subset of $X$. Hence for any closed point $x$ in $U$, then function $g$ is bounded on $[\eta_0,x]$. Therefore $\mu_x(g)\leqslant 0$ for any closed point $x\in U$. Since there are only a finite number of closed points outside of $U$, the proposition is proved.
\end{proof}

The above result permits to define an invariant $\mu_{\mathrm{tot}}(g)$ as follows
\begin{equation}\label{Equ:mutotal}\mu_{\mathrm{tot}}(g):=\sum_{x\in X^{(1)}}\mu_x(g)[\kappa(x):K]\in[-\infty,+\infty[\,,\end{equation}
where $X^{(1)}$ denotes the set of closed points of $X$, considered as a discrete measure space such that each point $x\in X^{(1)}$ has mass $1$, and the summation means the integration on this measure space. In the case where the set of $x\in X^{(1)}$ such that $\mu_x(g)<0$ is uncountable, one has $\mu_{\mathrm{tot}}(g)=-\infty$. Otherwise the set $\{x\in X^{(1)}\,:\,\mu_x(g)\neq 0\}$ is countable. If it is infinite then we can write it as a sequence $\{x_n\}_{n\in\mathbb N}$ and one has
\[\mu_{\mathrm{tot}}(g)=\sum_{n\in\mathbb N}\mu_{x_n}(g)[\kappa(x_n):K].\]
The sum does not depend on the choice of the sequence since $\mu_{x_n}(g)<0$ for all but finitely many $n\in\mathbb N$.

\begin{Lemma}\label{Lem:4}
Let $X$ be a regular projective curve over $\Spec K$ and $(D,g)$ be an adelic $\mathbb R$-Cartier divisor on $X$. 
\begin{enumerate}
\renewcommand{\labelenumi}{\rom{(\arabic{enumi})}}
\item For any non-zero element $s\in \mathrm{Rat}(X)_{\mathbb R}^{\times}$, one has
\[\mu_x(g-\log|s|)=\mu_x(g)+\mathrm{ord}_x(s).\]
\item One has $\mu_x(g)\leqslant\mathrm{ord}_x(D)$.
\end{enumerate}
\end{Lemma}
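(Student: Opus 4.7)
The plan is to exploit the parametrisation of each branch $[\eta_0,x]$ and the linearity of $-\log|s|$ along it, which is captured by equation \eqref{Equ:1}.

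For part (1), the identity \eqref{Equ:1} extends by $\mathbb{R}$-linearity to any $s \in \mathrm{Rat}(X)^{\times}_{\mathbb{R}}$: one has $-\log|s|(\xi) = t(\xi)\,\mathrm{ord}_x(s)$ for all $\xi \in [\eta_0,x]$. Dividing by $t(\xi) > 0$ on the open branch $]\eta_0,x[$ gives
\[
\frac{(g-\log|s|)(\xi)}{t(\xi)} \;=\; \frac{g(\xi)}{t(\xi)} + \mathrm{ord}_x(s),
\]
and taking infimum over $\xi \in {]\eta_0,x[}$ yields the claim. This is essentially a one-line verification.

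For part (2), the plan is to compare $g$ on the branch $[\eta_0,x]$ with $-\log|f|$, where $f \in \mathrm{Rat}(X)^{\times}_{\mathbb{R}}$ is a local equation of $D$ at $x$ chosen so that $\mathrm{ord}_x(f) = \mathrm{ord}_x(D)$. By Definition~\ref{Def:Green functions}, $g + \log|f|$ extends to a continuous function $\vartheta$ on $U^{\mathrm{an}}$ for some Zariski open neighbourhood $U$ of $x$. Since $U$ is non-empty, its generic point lies in $U$, so $\eta_0 \in U^{\mathrm{an}}$; together with $x \in U$, this implies that the whole compact branch $[\eta_0,x]$ is contained in $U^{\mathrm{an}}$, so $\vartheta$ is continuous and bounded on $[\eta_0,x]$.

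Write $g(\xi) = \vartheta(\xi) - \log|f|(\xi) = \vartheta(\xi) + t(\xi)\,\mathrm{ord}_x(f)$ on $]\eta_0,x[$. Dividing by $t(\xi)$ and letting $\xi \to x$ (equivalently, $t(\xi) \to +\infty$), the boundedness of $\vartheta$ forces
\[
\lim_{\xi \to x}\frac{g(\xi)}{t(\xi)} \;=\; \mathrm{ord}_x(f) \;=\; \mathrm{ord}_x(D).
\]
Hence the infimum $\mu_x(g)$ is bounded above by this limit, giving $\mu_x(g) \leqslant \mathrm{ord}_x(D)$. The argument has no real obstacle; the only subtle point is the topological remark that the whole branch sits inside $U^{\mathrm{an}}$ (ensured because $U$ contains both the generic point and $x$), which makes $\vartheta$ bounded on a compact set and permits the asymptotic computation as $t(\xi) \to +\infty$.
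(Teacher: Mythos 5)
Your proposal is correct, and both parts follow essentially the same route as the paper: part (1) is the same one-line manipulation from the linearity identity \eqref{Equ:1}, and part (2) rests on the same key fact that $g$ plus the logarithm of a local equation of $D$ at $x$ extends continuously (hence boundedly) to the compact branch $[\eta_0,x]$, after which division by $t(\xi)$ gives the bound. The only stylistic difference is that you compute the limit $\lim_{\xi\to x} g(\xi)/t(\xi)=\mathrm{ord}_x(D)$ directly, whereas the paper invokes part (1) to reduce to $\mu_x(g+\log|s|)\leqslant 0$; the substance is identical.
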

\begin{proof}
(1) For any $s\in \mathrm{Rat}(X)^{\times}_{\mathbb R}$ one has 
\[\mu_x(g-\log|s|)=\inf_{\xi\in\,]\eta_0,x[}\frac{g(\xi)-\log|s|(\xi)}{t(\xi)}=\mu_x(g)+\mathrm{ord}_x(s),\]
where the second equality comes from \eqref{Equ:1}.

(2) We let $s$ be an element in $\mathrm{Rat}(X)_{\mathbb R}^{\times}$ which defines $D$ locally on a Zariski open neighbourhood of $x$. Then the function $g+\log|s|$ extends continuously to $[\eta_0,x]$ and hence is bounded. Therefore $\mu_x(g+\log|s|)\leqslant 0$. By (1), we obtain that $\mu_x(g)\leqslant\mathrm{ord}_x(D)$.
\end{proof}

\begin{Theorem}\label{Pro:Dirichlet}
Let $X$ be a regular projective curve over $\Spec K$ such that
the rank of $\Pic(X)$ is one
and $(D,g)$ be an adelic $\mathbb R$-Cartier divisor on $X$. The following conditions are equivalent.
\begin{enumerate}
\renewcommand{\labelenumi}{\rom{(\arabic{enumi})}}
\item The adelic $\mathbb R$-Cartier divisor $(D,g)$ satisfies the Dirichlet property.
\item For all but a finite number of closed points $x\in X$, one has $\mu_x(g)\geqslant 0$, and $\mu_{\mathrm{tot}}(g)\geqslant 0$.
\end{enumerate}
\end{Theorem}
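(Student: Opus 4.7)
The plan is to handle the two directions separately: (i) $\Rightarrow$ (ii) reduces to a branch-by-branch translation of the effectivity condition, whereas the converse requires an explicit construction of an $\RR$-rational function witnessing the Dirichlet property, making essential use of the hypothesis that $\Pic(X)$ has rank one.

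For (i) $\Rightarrow$ (ii), I would suppose $(D,g) + \widehat{(s)} \geqslant 0$ for some $s \in \Rat(X)^{\times}_{\RR}$. On each open branch $]\eta_0, x[$, the identity $-\log|s|(\xi) = t(\xi)\ord_x(s)$ from (\ref{Equ:1}) transforms the Green-function inequality $g(\xi) - \log|s|(\xi) \geqslant 0$ into $g(\xi)/t(\xi) + \ord_x(s) \geqslant 0$, and passing to the infimum yields $\mu_x(g) + \ord_x(s) \geqslant 0$. Since $\ord_x(s) = 0$ for all but finitely many closed points, this gives $\mu_x(g) \geqslant 0$ cofinitely; combined with Lemma~\ref{Lem:4}(ii), which forces $\mu_x(g) > 0$ only at points of $\Supp(D)$, the set $\{x : \mu_x(g) \neq 0\}$ is finite, so $\mu_{\mathrm{tot}}(g)$ is automatically a finite sum. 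Summing the local inequalities weighted by $[\kappa(x):K]$ gives $\mu_{\mathrm{tot}}(g) + \deg((s)) \geqslant 0$, and $\deg((s)) = 0$ by $\RR$-linear extension of the classical degree formula for principal divisors.

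For (ii) $\Rightarrow$ (i), the plan is to produce $s \in \Rat(X)^{\times}_{\RR}$ with $\ord_x(s) \geqslant -\mu_x(g)$ for every $x$. The hypothesis together with Lemma~\ref{Lem:4}(ii) forces $\mu_x(g) = 0$ outside a finite set, so $A := -\sum_{x\in X^{(1)}} \mu_x(g)\,[x]$ is a finite $\RR$-divisor with $\deg(A) = -\mu_{\mathrm{tot}}(g) \leqslant 0$. Fixing an ample Cartier divisor $A_0$ and setting $\beta := -\deg(A)/\deg(A_0) \geqslant 0$, the rank-one hypothesis yields $\psi \in \Rat(X)^{\times}_{\RR}$ with $-A = \beta A_0 + (\psi)$. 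Choosing $n$ so that $nA_0$ has a non-zero global section $\sigma$ (possible by ampleness), I set $s_0 := \sigma^{\beta/n}$ when $\beta > 0$, or $s_0 := 1$ when $\beta = 0$; in either case $(s_0) + \beta A_0 \geqslant_{\RR} 0$. Then $s := s_0/\psi$ satisfies $(s) - A = (s_0) + \beta A_0 \geqslant 0$, hence $\ord_x(s) \geqslant -\mu_x(g)$ for every closed point~$x$.

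It remains to verify that $(D + (s), g - \log|s|)$ is effective. The divisorial condition $D + (s) \geqslant 0$ follows from $\ord_x(s) \geqslant -\mu_x(g) \geqslant -\ord_x(D)$ via Lemma~\ref{Lem:4}(ii). For the Green-function condition, on each $]\eta_0, x[$ the identity (\ref{Equ:1}) gives $g(\xi) - \log|s|(\xi) = g(\xi) + t(\xi)\ord_x(s) \geqslant t(\xi)(\mu_x(g) + \ord_x(s)) \geqslant 0$; at each leaf $x$ the value is either $+\infty$ (when $x \in \Supp(D+(s))$) or the continuous extension of the non-negative branch values; and at the root $\eta_0$, triviality of $|\cdot|_{\eta_0}$ gives $\log|s|(\eta_0) = 0$, while $g(\eta_0) \geqslant 0$ follows from continuity of $g$ at $\eta_0$ together with the fact that every Berkovich neighbourhood of $\eta_0$ contains some branch $[\eta_0, x]$ with $\mu_x(g) \geqslant 0$, along which $g$ is itself non-negative. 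The main technical hurdle is this construction step: the infinitely many local lower bounds $\ord_x(s) \geqslant -\mu_x(g)$ condense into a single global constraint $\deg(-A) \geqslant 0$ precisely because $\Pic(X)$ has rank one, which allows one to realise the class of $-A$ by sections of a positive multiple of a single ample divisor.
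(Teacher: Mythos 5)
Your proof is correct and follows essentially the same plan as the paper: the direction (1) $\Rightarrow$ (2) is the paper's argument verbatim (apply Lemma~\ref{Lem:4}(1) together with $\ord_x(s)=0$ off a finite set and $\deg((s))=0$), and for (2) $\Rightarrow$ (1) both proofs first force $\mu_x(g)=0$ outside a finite set and then build $s\in\Rat(X)^{\times}_{\RR}$ with $\ord_x(s)\geqslant -\mu_x(g)$ at every closed point, the rank-one hypothesis being what makes such an $s$ exist. The one cosmetic divergence is in how $s$ is produced: the paper perturbs the coefficients to $a_i\leqslant\mu_{x_i}(g)$ with $\sum_i a_i[\kappa(x_i):K]=0$ so the auxiliary divisor has degree exactly zero and is therefore $\RR$-principal, whereas you write $-\sum_x\mu_x(g)[x]=\beta A_0+(\psi)$ and soak up the surplus degree $\mu_{\mathrm{tot}}(g)$ with an $\RR$-power of a global section of a multiple of $A_0$; both are routine consequences of $\dim_{\QQ}\Pic_{\QQ}(X)=1$.
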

\begin{proof}
``(1)$\Longrightarrow$(2)'': We assume that there exists $s\in\mathrm{Rat}(X)^{\times}_{\mathbb R}$ such that $(D,g)+\widehat{(s)}$ is effective. By Lemma \ref{Lem:4} (1) one has
\begin{equation}\label{Equ: mu g-logs}\mu_x(g-\log|s|)=\mu_x(g)+\mathrm{ord}_x(s)\end{equation}
for any $x\in X$. Therefore, for all but a finite number of
$x\in X^{(1)}$,  
one has
\[\mu_x(g)=\mu_x(g-\log|s|)\geqslant 0.\]
Moreover, by \eqref{Equ: mu g-logs}
\[\mu_{\mathrm{tot}}(g)=\mu_{\mathrm{tot}}(g-\log|s|)-\sum_{x\in X^{(1)}}\mathrm{ord}_x(s)[\kappa(x):K]=\mu_{\mathrm{tot}}(g-\log|s|)\geqslant 0.\]
Therefore the condition (2) holds.

``(2)$\Longrightarrow$(1)'': Suppose that (2) is true. Since $g$ is a Green function of $D$, for any $x\in X^{(1)}$ which does not belong to the support of $D$, the function $g$ extends to a continuous function on a open subset of $X^{\mathrm{an}}$ containing $[\eta_0,x]$ and hence is bounded on this compact set. In particular, one has $\mu_x(g)\leqslant 0$ for such point $x$. Hence by the hypothesis in the condition (2), we obtain that $\mu_x(g)=0$ for all but a finite number of closed points $x_1,\ldots,x_n$ in $X$.
Let $a_1,\ldots,a_n$ be real numbers such that $ a_i\leqslant\mu_{x_i}(g)$ and
\[\sum_{i=1}^na_i[\kappa(x_i):K]=0.\]
This is possible since 
\[\sum_{i=1}^n\mu_{x_i}(g)[\kappa(x_i):K]=\mu_{\mathrm{tot}}(g)\geqslant 0.\]
Let
\[D'=\sum_{i=1}^na_i[x_i].\]
Since $\deg(D')=0$, it is an principal $\mathbb R$-Cartier divisor. Let $s\in \mathrm{Rat}(X)^{\times}_{\mathbb R}$ such that $-D'=(s)$. By Lemma \ref{Lem:4} (2) one has $\mu_x(g)\leqslant\mathrm{ord}_x(D)$ for any $x$. Hence $D- D'$ is effective. Moreover, Lemma \ref{Lem:4} (1) shows that 
\[\mu_x(g-\log|s|)=
\mu_x(g)+\mathrm{ord}_x(s)=\begin{cases}
\mu_{x_i}(g)-a_i\geqslant 0,&\text{if $x=x_i$},\\
\mu_{x}(g)\geqslant 0,&\text{else}.
\end{cases}\]
Therefore $g-\log|s|$ is non-negative.
\end{proof}

\begin{Theorem}\label{Pro: Dirichlet property and mu tot}
Let $X$ be a regular projective curve over $\Spec K$ such that 
the rank of $\Pic(X)$ is one
and $(D,g)$ be an adelic $\mathbb R$-Cartier divisor on $X$. The following conditions are equivalent.
\begin{enumerate}
\renewcommand{\labelenumi}{\rom{(\arabic{enumi})}}
\item For any $\varepsilon>0$, the Dirichlet property holds for $(D,g+\varepsilon)$.
\item One has $\mu_{\mathrm{tot}}(g)\geqslant 0$. 
\end{enumerate}
\end{Theorem}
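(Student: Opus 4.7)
The plan is to derive Theorem~\ref{Pro: Dirichlet property and mu tot} from the previously established Theorem~\ref{Pro:Dirichlet}, applied to the perturbed data $(D,g+\varepsilon)$ for each $\varepsilon>0$, and then to pass to the limit $\varepsilon\to 0^+$ via a monotone-convergence argument on $X^{(1)}$. The starting observation is that $\mu_x(g+\varepsilon)$ decreases to $\mu_x(g)$ pointwise as $\varepsilon\downarrow 0$ for every $x$, which is immediate from the defining infimum.

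For the direction $(2)\Rightarrow(1)$, I would argue as follows. The assumption $\mu_{\mathrm{tot}}(g)\geqslant 0$ is finite, so $\mu_x(g)\in\RR$ for every closed point $x$; since $g$ is continuous at $\eta_0$ and $g(\xi)/t(\xi)\to -\infty$ along any branch whenever $g(\eta_0)<0$, this in turn forces $g(\eta_0)\geqslant 0$. Fixing $\varepsilon>0$, the set $\{\xi\in X^{\mathrm{an}}\setminus\Supp(D)^{\mathrm{an}}:g(\xi)>-\varepsilon\}$ is an open subset of $X^{\mathrm{an}}$ containing $\eta_0$, so by the description of the topology of $X^{\mathrm{an}}$ near $\eta_0$ recalled in the excerpt it contains $[\eta_0,x]$ for all but finitely many closed points $x$. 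For such $x$, $g+\varepsilon\geqslant 0$ on $\,]\eta_0,x[$, which yields $\mu_x(g+\varepsilon)\geqslant 0$, verifying condition~(i) of Theorem~\ref{Pro:Dirichlet}. Since the series defining $\mu_{\mathrm{tot}}(g)$ is absolutely convergent (its positive part is bounded via Lemma~\ref{Lem:4}(2) and its total value is finite by hypothesis), termwise summation of $\mu_x(g+\varepsilon)\geqslant\mu_x(g)$ gives $\mu_{\mathrm{tot}}(g+\varepsilon)\geqslant\mu_{\mathrm{tot}}(g)\geqslant 0$, which is condition~(ii). Theorem~\ref{Pro:Dirichlet} then applies to $(D,g+\varepsilon)$.

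For the direction $(1)\Rightarrow(2)$, Theorem~\ref{Pro:Dirichlet} applied to each $(D,g+\varepsilon)$ provides $\mu_x(g+\varepsilon)\geqslant 0$ for all but finitely many $x$ together with a finite non-negative value for $\mu_{\mathrm{tot}}(g+\varepsilon)$ (finiteness again via Lemma~\ref{Lem:4}(2) on the positive side and condition~(i) of Theorem~\ref{Pro:Dirichlet} on the negative side). Fix a reference $\varepsilon_0>0$. The non-negative differences $\mu_x(g+\varepsilon_0)-\mu_x(g+\varepsilon)$ increase monotonically to $\mu_x(g+\varepsilon_0)-\mu_x(g)\in[0,+\infty]$ as $\varepsilon\downarrow 0$, and monotone convergence on the weighted counting measure on $X^{(1)}$ yields
\[
\mu_{\mathrm{tot}}(g+\varepsilon_0)-\mu_{\mathrm{tot}}(g+\varepsilon)\;\uparrow\;\sum_{x\in X^{(1)}}\bigl(\mu_x(g+\varepsilon_0)-\mu_x(g)\bigr)[\kappa(x):K]
\]
as $\varepsilon\downarrow 0$. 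The left-hand side is bounded above by $\mu_{\mathrm{tot}}(g+\varepsilon_0)<\infty$, so the right-hand side is finite; this forces $\mu_x(g)\in\RR$ for every $x$, and comparing negative parts gives that $\mu_{\mathrm{tot}}(g)$ is itself a finite real with $\lim_{\varepsilon\downarrow 0}\mu_{\mathrm{tot}}(g+\varepsilon)=\mu_{\mathrm{tot}}(g)$. Combined with $\mu_{\mathrm{tot}}(g+\varepsilon)\geqslant 0$ for every $\varepsilon>0$, this gives $\mu_{\mathrm{tot}}(g)\geqslant 0$.

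The only delicate point is the bookkeeping for series indexed by the (potentially uncountable) set $X^{(1)}$, in which finitely many terms are positive but infinitely many may be negative. The strategy is that every invocation of convergence is reduced either to a series of non-negative terms (handled by monotone convergence) or to a manipulation of absolutely convergent signed sums, the absolute convergence being ensured on one side by the uniform bound $\mu_x\leqslant\ord_x(D)$ of Lemma~\ref{Lem:4}(2) and on the other by the finiteness of $\mu_{\mathrm{tot}}(g+\varepsilon)$ supplied by Theorem~\ref{Pro:Dirichlet}.
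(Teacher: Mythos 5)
Your argument is correct and follows essentially the same route as the paper: reduce to Theorem~\ref{Pro:Dirichlet} applied to $(D,g+\varepsilon)$ and pass to the limit $\varepsilon\downarrow 0$ by monotone convergence, using $g(\eta_0)\geqslant 0$ and the shape of the Berkovich topology near $\eta_0$ for the direction $(2)\Rightarrow(1)$. Your rewriting of the decreasing-limit step as an increasing, non-negative family of differences is just a more explicit justification of the monotone-convergence invocation the paper makes directly.
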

\begin{proof}
By Theorem \ref{Pro:Dirichlet}, if (1) holds, then for any $\varepsilon>0$ one has
$\mu_{\mathrm{tot}}(g+\varepsilon)\geqslant 0$.
Since the function $\varepsilon\mapsto \mu_y(g+\varepsilon)$ is decreasing and converges to $\mu_y(g)$, by the monotone convergence theorem we obtain $\mu_{\mathrm{tot}}(g)\geqslant 0$.

Conversely, assume that the inequality $\mu_{\mathrm{tot}}(g)\geqslant 0$ holds.
We claim that $g(\eta_0)\geqslant 0$. Otherwise one has $\mu_x(g)=-\infty$ for any $x\in X^{\mathrm{an}}$ and hence $\mu_{\mathrm{tot}}(g)=-\infty$. Let $\varepsilon$ be a positive number. The set $\{y\in X^{\mathrm{an}}\,:\,g(y)+\varepsilon>0\}$ is an open subset of $X^{\mathrm{an}}$ containing $\eta_0$. Therefore, there exists a 
finite  
subset $S$ of $X^{(1)}$ such that $g(y)+\varepsilon\geqslant 0$ for any $x\in X^{(1)}\setminus S$ and any $y\in [\eta_0,x]$. In particular, for any $x\in  X^{(1)}\setminus S$, one has 
$\mu_x(g+\varepsilon)\geqslant 0$. 
Moreover, one has $\mu_{\mathrm{tot}}(g+\varepsilon)\geqslant\mu_{\mathrm{tot}}(g)\geqslant 0$. By Theorem \ref{Pro:Dirichlet}, we obtain that the Dirichlet property holds for $(D,g+\varepsilon)$.
\end{proof}

\begin{Remark}\label{Rem: obstruction}
Let $X$ be a regular projective curve over $\Spec K$ such that 
the rank of $\Pic(X)$ is one.
Let $(D,g)$ be an adelic $\mathbb R$-Cartier divisor on $X$. We assume that $D$ is big. By Proposition \ref{Pro: pseudo-effectivity}, the conditions in Theorem \ref{Pro: Dirichlet property and mu tot} are equivalent to the pseudo-effectivity of $(D,g)$. Note that the function $\lambda_{\max}^{\mathrm{asy}}(\ndot)$ is derivable at $(D,g)$ along any direction in  $C^0(X^{\mathrm{an}})$. Moreover, the directional derivatives of $\lambda_{\max}^{\mathrm{asy}}(\ndot)$ form a positive linear functional on $C^0(X^{\mathrm{an}})$, which identifies with the Dirac measure on $\eta_0$. Therefore the above results suggest that the functional obstruction to the Dirichlet property proposed in \cite{DsysDirichlet} may not be the only obstruction.
\end{Remark}

\begin{Example}
\label{exam:projective:line:Dirichlet}
We assume $X = \PP^1_K = \Proj(K[T_0, T_1])$. We set $\pmb{0} = (1:0)$, $\pmb{\infty} = (0:1)$,
$z = T_1/T_0$, $D = \{ T_0 = 0 \}$ and
$g = \log \max \{ 1, |z| \}$.
Then $g$ is a Green function of $D$ and $(D, g)$ is pseudo-effective by 
Example~\ref{exam:projective:space:big}. 
Moreover, since
\[
g(\xi) = \begin{cases}
t(\xi) & \text{if $x = \pmb{\infty}$}, \\
0 & \text{otherwise}
\end{cases}
\]
for a closed 
point $x \in X$ and  $\xi \in [\eta_0, x]$, we have
\[
\mu_x(g) = \begin{cases}
1 & \text{if $x = \pmb{\infty}$},\\
0 & \text{otherwise}.
\end{cases}
\]
We choose distinct countably many closed points
$x_1, \ldots, x_n, \ldots$ in $\PP^1_K \setminus \{ \pmb{0}, \pmb{\infty} \}$.
Here we consider the following continuous function $\psi$ on $\PP^{1, \mathrm{an}}_K$:
for a closed point $x$ of $\PP^1_K$ and $\xi \in [\eta_0, x]$,
\[
\psi(\xi) := \begin{cases}
{\displaystyle \theta_{\frac{1}{2^n [\kappa(x_n) : K]}}(t(\xi))} & \text{if $x = x_n$ for some $n$}, \\[1ex]
0 & \text{if $x \not\in \{ x_1, \ldots, x_n, \ldots \}$},
\end{cases}
\]
where $\theta_a$ ($a \in \RR_{>0}$) is a continuous function on $[0, \infty]$ given by
\[
\theta_a(t) := \begin{cases}
a t & t \in [0, 1], \\
a & t \in [1, \infty].
\end{cases}
\]
We introduce a new Green function $g'$ given by $g' := g - \psi$.
Then
\[
\mu_x(g') = \begin{cases}
{\displaystyle - \frac{1}{2^n [\kappa(x_n) : K]}} & \text{if $x = x_n$ for some $n$}, \\[2ex]
\mu_x(g) & \text{if $x \not\in \{ x_1, \ldots, x_n, \ldots \}$}.
\end{cases}
\]
Thus,
\[
\mu_{\mathrm{tot}}(g') = 1 - \sum_{n=1}^{\infty} (1/2)^n = 0.
\]
Therefore $(D, g')$ is pseudo-effective, but $(D, g')$ has no Dirichlet's property.
\end{Example}

\begin{Corollary}
\label{cor:Dirichlet:endomorphism:curve}
Let $f : X \to X$ be a surjective endomorphism of $X$ over $K$ and $D$ be an $\RR$-Cartier divisor on $X$
such that $f^*(D) = d D + (\varphi)$ for some $\varphi \in \Rat(X)^{\times}_{\RR}$ and
$d \in \RR_{>1}$. Let $(D, g)$ be the canonical compactification of $D$ with respect to $f$.
We assume that $\dim X = 1$, the rank of $\Pic(X)$ is one and
$\deg(D) \geqslant 0$.
Then $(D, g)$ satisfies the Dirichlet property.
\end{Corollary}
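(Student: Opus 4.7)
The plan is to verify the two numerical conditions in Theorem~\ref{Pro:Dirichlet}. In fact I will prove the stronger equality $\mu_x(g) = \mathrm{ord}_x(D)$ for every closed point $x$ of $X$, from which both conditions follow: $\mathrm{Supp}(D)$ is finite, so $\mu_x(g) = 0$ for all but finitely many $x$, and
\[
\mu_{\mathrm{tot}}(g) = \sum_{x \in X^{(1)}} \mathrm{ord}_x(D)\,[\kappa(x):K] = \deg(D) \geqslant 0.
\]

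The first step is to check that $D$ is semiample as an $\mathbb{R}$-Cartier divisor. Fix any ample Cartier divisor $A$ on $X$. Since the rank of $\Pic(X)$ is one, we may write $D = aA + (\psi)$ for some $a \in \mathbb{R}$ and $\psi \in \Rat(X)^{\times}_{\mathbb{R}}$; taking degrees gives $a = \deg(D)/\deg(A) \geqslant 0$. The term $aA$ is a non-negative multiple of an ample (hence semiample) Cartier divisor, and $(\psi)$ is $\mathbb{R}$-principal, hence semiample by the convention recalled in~\ref{CT:semiample}. Therefore $D$ is semiample, and Proposition~\ref{prop:height:dynamic:system}(4) yields that the canonical height function $h^{\mathrm{an}}_{(D,g)}$ is non-negative on all of $X^{\mathrm{an}}$.

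The second step converts this non-negativity into the equality $\mu_x(g) = \mathrm{ord}_x(D)$. Fix a closed point $x$ and a local equation $f \in \Rat(X)^{\times}_{\mathbb{R}}$ of $D$ near $x$, so that $\mathrm{ord}_x(f) = \mathrm{ord}_x(D)$. For any $\xi \in \,]\eta_0, x[$ one has $r_\xi = x$ and $|f|(\xi) = \exp(-t(\xi)\mathrm{ord}_x(D))$; moreover the scheme point of $\xi$ is the generic point of $X$, so $g + \log|f|$ is already continuous at $\xi$ without any further extension. The defining formula of the height therefore gives
\[
h^{\mathrm{an}}_{(D,g)}(\xi) = g(\xi) - t(\xi)\mathrm{ord}_x(D) \geqslant 0,
\]
and dividing by $t(\xi) > 0$ and passing to the infimum over $\xi \in \,]\eta_0, x[$ yields $\mu_x(g) \geqslant \mathrm{ord}_x(D)$; the reverse inequality is Lemma~\ref{Lem:4}(2), whence the equality. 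The main (mild) obstacle is of a bookkeeping nature: one must verify that $h^{\mathrm{an}}_{(D,g)}$, defined through a local equation and a continuous extension, admits the explicit formula above on the open branch $\,]\eta_0, x[$ even when $x \in \mathrm{Supp}(D)$, so that the infimum defining $\mu_x(g)$ can be computed directly from the height. Once this equality is established, Theorem~\ref{Pro:Dirichlet} concludes.
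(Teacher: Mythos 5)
Your proof is correct, and it takes a genuinely cleaner route than the paper's. The two arguments coincide up to establishing that $D$ is semiample (from $D = aA + (\psi)$ with $A$ ample and $a = \deg(D)/\deg(A) \geqslant 0$) and then invoking Proposition~\ref{prop:height:dynamic:system}(4) to get $h^{\mathrm{an}}_{(D,g)} \geqslant 0$. From here they diverge. The paper exploits the non-negativity of the height only on $(X\setminus\Supp(D))^{\mathrm{an}}$, where a local equation of $D$ is $1$ and hence $h^{\mathrm{an}}_{(D,g)} = g$; this yields $\mu_x(g)\geqslant 0$ for the cofinitely many $x\notin\Supp(D)$ but controls nothing at the support points, and the estimate $\mu_{\mathrm{tot}}(g)\geqslant 0$ is imported separately via Proposition~\ref{prop:height:dynamic:system}(5) (which rests on the dynamical iteration of Proposition~\ref{prop:canonical:Dirichlet}) together with Theorem~\ref{Pro: Dirichlet property and mu tot}. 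You instead use the non-negativity on every branch $]\eta_0,x[$, where the height has the explicit form $g(\xi) - t(\xi)\,\mathrm{ord}_x(D)$; the bookkeeping you flag is indeed harmless, since for $\xi\in\,]\eta_0,x[$ one has $r_\xi = x$, $j(\xi)=\eta$ lies in any non-empty Zariski open, and $\log|f|(\xi) = -t(\xi)\,\mathrm{ord}_x(f) = -t(\xi)\,\mathrm{ord}_x(D)$ for a local equation $f$ near $x$, so the continuous extension $\vartheta_f$ agrees with $g+\log|f|$ at $\xi$. Dividing by $t(\xi)$ and pairing with Lemma~\ref{Lem:4}(2) gives the exact value $\mu_x(g) = \mathrm{ord}_x(D)$ at every closed point, from which $\mu_{\mathrm{tot}}(g) = \deg(D)\geqslant 0$ is immediate. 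This dispenses with Proposition~\ref{prop:height:dynamic:system}(5) and Theorem~\ref{Pro: Dirichlet property and mu tot} entirely, and yields a complete determination of the invariants $\mu_x(g)$ rather than just the inequalities needed to apply Theorem~\ref{Pro:Dirichlet}.
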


\begin{proof}
We set $D = a_1 D_1 + \cdots + a_r D_r$ for some Cartier divisors
$D_1, \ldots, D_r$ on $X$ and $a_1, \ldots, a_r \in \RR$.
Let $A$ be an ample and effective Cartier divisor on $X$.
Since the rank of $\Pic(X)$ is one,
for each $i$,
there are $b_i \in \QQ$ and $s_i \in \Rat(X)^{\times}_{\QQ}$ with $D_i = b_i A + (s_i)$.
Thus $D = aA + (s)$ for some $a \in \RR$ and $s \in \Rat(X)^{\times}_{\RR}$.
Note that $a \geqslant 0$ because $\deg(D) \geqslant 0$.
Therefore, $D$ is semiample and $D + (s^{-1})$ is effective.

We set $U = X \setminus \Supp(D)$.
A local equation of $D$ over $U$ is given by $1$.
Therefore, by (4) in Proposition~\ref{prop:height:dynamic:system}, for all $\xi \in U^{\mathrm{an}}$,
$g(\xi) = h_{(D,g)}^{\mathrm{an}}(\xi) \geqslant 0$. Therefore, $g \geqslant 0$ on $[\eta_0, x]$
for all but finitely many closed points $x$, and hence
$\mu_x(g) \geqslant 0$ for all but finitely many closed points $x$.

Moreover, by Theorem~\ref{Pro: Dirichlet property and mu tot}  together with
(5) in Proposition~\ref{prop:height:dynamic:system},
$\mu_{\mathrm{tot}}(g) \geqslant 0$. 
Thus $(D, g)$ satisfies the Dirichlet property
by Theorem~\ref{Pro:Dirichlet}.
\end{proof}

\subsection{The plurisubharmonic case}
Here we apply the criterion of the previous subsection to a plurisubharmonic Green function.

\begin{Proposition}
Let $X\rightarrow\Spec K$ be a regular projective curve on $\Spec K$, $D$ be an $\mathbb R$-Cartier divisor on $X$ and $g$ be a plurisubharmonic Green function of $D$. For any closed point $x$ of $X$, the restriction of the function $g$ on $[\eta_0,x[$ is concave, where we consider the parametrisation $t:[\eta_0,x[\,\rightarrow[0,+\infty[$.
\end{Proposition}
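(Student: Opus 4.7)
The strategy is to reduce, via the definition of plurisubharmonic Green function, to the case where $g$ is attached to a single quotient metric on $L:=\mathcal{O}_X(D)$, and then to perform an explicit computation on the branch. Both concavity of a real-valued function on a segment and the expressibility as a minimum of finitely many affine functions are preserved under positive $\mathbb{R}$-linear combinations and uniform limits, which justifies the successive reductions.

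\textbf{Reductions.} First, combining the definition of plurisubharmonic type for an $\mathbb{R}$-Cartier divisor (the paragraph preceding Proposition~\ref{prop:basic:prop:pluri:R}) with part~(4) of that proposition, I may assume that $D$ is a semiample $\mathbb{Q}$-Cartier divisor and that $g$ is a plurisubharmonic Green function of $D$. Passing to a positive integer multiple $nD$ with $nD$ Cartier and replacing $g$ by $ng$ (which preserves concavity along any branch), I reduce to the case that $D$ is Cartier and $|\ndot|_g$ is a semipositive metric on $L:=\mathcal{O}_X(D)$. Invoking the definition of semipositive metric as a uniform limit of $1/e_k$-th powers of quotient metrics on $L^{\otimes e_k}$, one further limit passage brings me to the situation where $g$ is the Green function associated with a single quotient metric $|\ndot|^{\mathrm{quot}}_{\overline{V}}$ coming from a surjection $\pi:V\otimes_K\mathcal{O}_X\to L$ and an ultrametric norm $\|\ndot\|$ on the finite-dimensional $K$-vector space $V$.

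\textbf{Explicit computation on $[\eta_0,x[$.} Since $K$ carries the trivial absolute value, the norm $\|\ndot\|$ admits an orthogonal basis $e_1,\ldots,e_N$ with $\|e_j\|=e^{-\lambda_j}$ for some real numbers $\lambda_j$. The operator-norm extension of $\|\ndot\|$ to $V\otimes_K\widehat{\kappa}(\xi_{x,t})$ is then given by
\[
\Bigl\|\sum_{j=1}^N\alpha_j e_j\Bigr\|_{\widehat{\kappa}(\xi_{x,t})}=\max_{1\leqslant j\leqslant N}|\alpha_j|_{\xi_{x,t}}\,e^{-\lambda_j}.
\]
Choose a rational section $s_0$ of $L$ that is regular and non-vanishing at $x$ (such a section exists by local triviality of $L$ at $x$), and write $\pi(e_j)=F_j\,s_0$ with $F_j\in\mathcal{O}_{X,x}$. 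Ultrametric duality applied to the one-dimensional quotient line $L(\xi_{x,t})$ yields the closed form
\[
\log|s_0|^{\mathrm{quot}}_{\overline{V}}(\xi_{x,t})=-\max_j\bigl(\lambda_j+\log|F_j|_{\xi_{x,t}}\bigr)=\min_{1\leqslant j\leqslant N}\bigl(t\cdot\mathrm{ord}_x(F_j)-\lambda_j\bigr),
\]
which is a minimum of finitely many affine functions of $t$ and hence concave on $[0,+\infty[$. Because $g(\xi_{x,t})$ differs from $\log|s_0|^{\mathrm{quot}}_{\overline{V}}(\xi_{x,t})$ by an affine function of $t$ (encoding the relation between the trivializing section $s_0$ and the canonical rational section $s_D$ with $\mathrm{div}(s_D)=-D$), the concavity of $g$ along $[\eta_0,x[$ follows.

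\textbf{Main obstacle.} The key technical point is the closed-form duality expression for the quotient norm on the one-dimensional line $L(\xi_{x,t})$. It relies on the existence of an orthogonal basis for an ultrametric norm on a finite-dimensional vector space over the trivially valued field $K$, and on the compatibility of orthogonality with scalar extension to $\widehat{\kappa}(\xi_{x,t})$, so that the operator norm takes the stated diagonal form. Once this is in place, the concavity assertion is the elementary observation that the minimum of finitely many affine functions of $t$ is concave, together with the bookkeeping that relates $\log|s_0|^{\mathrm{quot}}_{\overline{V}}$ and $g$ by an affine function of $t$.
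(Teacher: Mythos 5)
Your strategy --- reduce to a single quotient metric via the definitions of plurisubharmonic/semipositive metric, then compute explicitly on the branch with an orthogonal basis and duality --- is exactly the paper's, and the closed-form identity
\[
\log|s_0|^{\mathrm{quot}}_{\overline V}(\xi_{x,t})=\min_{1\leqslant j\leqslant N}\bigl(t\,\mathrm{ord}_x(F_j)-\lambda_j\bigr)
\]
is correct. The flaw is in the final bookkeeping step. With the paper's convention (Remark~\ref{Rem: continuous metric}), the canonical rational section $s_D$ satisfies $f^{-1}s_D=s_U$ for a local equation $f$ of $D$, so $\mathrm{div}(s_D)=D$ (not $-D$), and the Green function is $g=-\log|s_D|_{\phi_g}$. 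Hence on $[\eta_0,x[$ the function $g$ differs from $-\log|s_0|^{\mathrm{quot}}_{\overline V}$, not from $+\log|s_0|^{\mathrm{quot}}_{\overline V}$, by an affine function of $t$; and
\[
-\log|s_0|^{\mathrm{quot}}_{\overline V}(\xi_{x,t})=\max_{1\leqslant j\leqslant N}\bigl(\lambda_j-t\,\mathrm{ord}_x(F_j)\bigr)
\]
is a \emph{maximum} of affine functions of $t$. The paper's own proof arrives at the same expression (it writes $-\log|s|=\log|s^\vee|=\max_i(\log|u_i|+\log\|\alpha_i\|^\vee)$, again a maximum of affine functions on the branch) before asserting concavity; you have obtained the matching word ``concave'' through a compensating sign slip rather than by reproducing this formula. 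You should correct the relation between $g$ and $\log|s_0|^{\mathrm{quot}}_{\overline V}$, observe that the resulting maximum-of-affine expression is convex in $t$, and then carefully reconcile this with the curvature direction asserted in the statement and used later in Corollary~\ref{cor:plurisubharmonic:pseudoeffective:Dirichlet}.
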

\begin{proof}
Since uniform limits and positive linear combinations of concave functions is still a concave function, it suffices to treat the case where $D$ is a Cartier divisor and the metric $\phi_g$ on $\mathcal O_X(D)$ corresponding to the Green function $g$ is a quotient metric. Let $(E,\|\ndot\|)$ be a finite dimensional normed vector space over $K$ and $f:X\rightarrow\mathbb P(E)$ be a $K$-morphism such that $f^*(\mathcal O_E(1))\cong\mathcal O_X(D)$ and that $\phi_g$ is the quotient metric induced by $((E,\|\ndot\|),f)$. Let $x$ be a closed point of $X$ and $s$ be a section of $\mathcal O_X(D)$ over a Zariski open neighbourhood $U$ of $X$, which trivialises the invertible sheaf $\mathcal O_X(D)$ on $U$. Let $(\alpha_i)_{i=1}^n$ be an orthogonal basis of $(E^\vee,\|\ndot\|^\vee)$, where $\|\ndot\|^\vee$ denotes the dual norm of $\|\ndot\|$. Recall that for any $(\lambda_1,\ldots,\lambda_n)\in K^n$ one has
\[\|\lambda_1\alpha_1+\cdots+\lambda_n\alpha_n\|=\max_{i\in\{1,\ldots,n\}}|\lambda_i|\cdot\|\alpha_i\|^\vee.\]
We refer the readers to \cite[\S1.3]{Extension} for the existence of an orthogonal basis of $(E^\vee,\|\ndot\|^\vee)$. We write the dual section $s^\vee\in H^0(U,\mathcal O_X(D)^\vee)$ as a linear combination
\[s^\vee=u_1\alpha_1+\cdots+u_n\alpha_n,\]
where $u_1,\ldots, u_n$ are regular functions on $U$. One has
\[-\log |s|=\log |s^\vee|=\max_{i\in\{1,\ldots,n\}}(\log|u_i|+\log\|\alpha_i\|^\vee).\]
Since each function $\log|u_i|$ is linear on $[\eta_0,x[$ with respect to the parametrisation $t:[\eta_0,x[\,\rightarrow[0,+\infty[$, we obtain that the function $-\log |s|$ is concave on $[\eta_0,x[$. Since the functions $g$ and $-\log |s|$ differ by a linear function on $[\eta_0,x[$, the function $g$ is also concave.
\end{proof}

\begin{Corollary}
\label{cor:plurisubharmonic:pseudoeffective:Dirichlet}
Let $X$ be a regular projective curve over $\Spec K$ such that the rank of $\Pic(X)$ is one and $(D,g)$ be an adelic $\mathbb R$-Cartier divisor. We assume that the Green function $g$ is plurisubharmonic. Then the adelic $\mathbb R$-Cartier divisor $(D,g)$ satisfies the Dirichlet property if and only if it is pseudo-effective.
\end{Corollary}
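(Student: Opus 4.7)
The direction Dirichlet $\Rightarrow$ pseudo-effective is standard (an effective adelic $\RR$-Cartier divisor is pseudo-effective, and pseudo-effectivity is preserved under $\RR$-linear equivalence), so the heart of the matter is the converse. Assuming $(D,g)$ pseudo-effective with $g$ plurisubharmonic, my plan is to invoke Theorem~\ref{Pro:Dirichlet}, which reduces the task to verifying the two numerical conditions $\mu_x(g)\geqslant 0$ for all but finitely many closed $x$ and $\mu_{\mathrm{tot}}(g)\geqslant 0$. The central observation is that plurisubharmonicity forces $\mu_x(g)=\ord_x(D)$ for every $x$.

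First I would establish $g(\eta_0)\geqslant 0$. Fix an ample Cartier divisor $A$ and a Green function $h_A$ such that $(A,h_A)$ is big; this can always be arranged by adding a sufficiently large constant to any initial Green function of $A$. For each $\varepsilon>0$ the scaled pair $(\varepsilon A,\varepsilon h_A)$ is still big, so pseudo-effectivity of $(D,g)$ combined with Proposition~\ref{Pro: bigness and lambda} yields $\lambda_{\max}^{\mathrm{asy}}(D+\varepsilon A,g+\varepsilon h_A)>0$, while Corollary~\ref{cor:finite:asy:max} together with Proposition~\ref{Pro:ess min g} upper-bounds this quantity by $g(\eta_0)+\varepsilon h_A(\eta_0)$. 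Letting $\varepsilon\to 0^{+}$ produces $g(\eta_0)\geqslant 0$.

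Next I would exploit the preceding proposition: $g$ is concave along each branch $[\eta_0,x[$ with respect to the parameter $t$. Combined with $g(\eta_0)\geqslant 0$, the standard inequality for concave functions non-negative at the origin shows that $g(\xi)/t(\xi)$ is non-increasing in $t(\xi)$ on $]\eta_0,x[$, so
\[
\mu_x(g)=\lim_{t(\xi)\to+\infty}\frac{g(\xi)}{t(\xi)}.
\]
Choosing a local equation $s\in\Rat(X)^{\times}_{\RR}$ of $D$ near $x$, the defining property of Green functions makes $g+\log|s|$ extend continuously (hence boundedly) to $[\eta_0,x]$; since $\log|s|(\xi)=-t(\xi)\ord_x(D)$ along the branch, dividing by $t(\xi)$ and passing to the limit gives $\mu_x(g)=\ord_x(D)$ for every closed point $x$. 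This identification is the principal technical point and the step where plurisubharmonicity is genuinely used.

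With this in hand, the conclusion is immediate: $\ord_x(D)=0$ outside the finite set $\Supp(D)$, so $\mu_x(g)=0\geqslant 0$ for all but finitely many $x$, and
\[
\mu_{\mathrm{tot}}(g)=\sum_{x\in X^{(1)}}\ord_x(D)[\kappa(x):K]=\deg(D).
\]
Pseudo-effectivity of $(D,g)$ forces $D$ itself to be pseudo-effective on the curve (for any ample $A$ and $\varepsilon>0$, $D+\varepsilon A$ must be big, so $\deg(D+\varepsilon A)>0$; letting $\varepsilon\to 0^{+}$ gives $\deg(D)\geqslant 0$), hence $\mu_{\mathrm{tot}}(g)\geqslant 0$, and Theorem~\ref{Pro:Dirichlet} supplies the Dirichlet property. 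The main obstacle is Step~1: one needs a big auxiliary $(A,h_A)$ whose essential-minimum contribution $\varepsilon h_A(\eta_0)$ can be driven to zero, which is precisely why the scaling by $\varepsilon$ is required; once $g(\eta_0)\geqslant 0$ is secured, the concavity along branches pins down $\mu_x(g)$ exactly and the rest is formal.
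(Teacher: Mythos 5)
Your proposal is correct and follows essentially the same route as the paper's own proof: establish $g(\eta_0)\geqslant 0$ from pseudo-effectivity via the essential-minimum bound, then use concavity of $g$ along each branch (from the plurisubharmonic proposition) combined with $g(\eta_0)\geqslant 0$ to show $\mu_x(g)=\ord_x(D)$ for every closed point $x$, so that $\mu_{\mathrm{tot}}(g)=\deg(D)\geqslant 0$, and apply Theorem~\ref{Pro:Dirichlet}. The only cosmetic difference is that you package the concavity argument as ``$g(\xi)/t(\xi)$ is non-increasing'' whereas the paper works with $(g(\xi)-g(\eta_0))/t(\xi)$, but these are the same inequality.
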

\begin{proof}
It is clear that any adelic $\mathbb R$-Cartier divisor satisfying the Dirichlet property is pseudo-effective. 
Let $(D,g)$ be a pseudo-effective adelic $\mathbb R$-Cartier divisor. We claim that $g(\eta_0)\geqslant 0$. In fact, let $(D_1,g_1)$ be a big adelic $\mathbb R$-Cartier divisor. For any $\varepsilon>0$, the adelic $\mathbb R$-Cartier divisor $(D+\varepsilon D_1,g+\varepsilon g_1)$ is big. Hence by Corollary \ref{cor:finite:asy:max}
and Proposition \ref{Pro: bigness and lambda}, one has
\[g(\eta_0)+\varepsilon g_1(\eta_0)=\widehat{\mu}_{\mathrm{ess}}(D+\varepsilon D_1,g+\varepsilon g_1)>0,\]
where the equality comes form Proposition \ref{Pro:ess min g}. Since $\varepsilon>0$ is arbitrary, we obtain $g(\eta_0)\geqslant 0$. 

Let $x$ be a closed point of $X$.
We choose a Zariski open set $U$ containg $x$ such that
a local equation of $D$ over $U$ is given by $f$.
Then there is a continuous function $u$ on $U^{\mathrm{an}}$ such that
$g = u - \log |f|$ over $U^{\mathrm{an}}$. Note that $[\eta_0, x] \subseteq U^{\mathrm{an}}$ and
\[
\ord_{x}(f) = \lim_{t(\xi)\rightarrow+\infty} \frac{-\log |f|(\xi)}{t(\xi)} =
\lim_{t(\xi)\rightarrow+\infty} \frac{g(\xi)}{t(\xi)},
\]
so that
\[
\mathrm{ord}_x(D)=\lim_{t(\xi)\rightarrow+\infty}\frac{g(\xi)}{t(\xi)}=\lim_{t(\xi)\rightarrow+\infty}\frac{g(\xi)-g(\eta_0)}{t(\xi)}.
\]
Since the restriction of the function $g$ on $[\eta_0,x[$ is concave,
the function
\[\xi\longmapsto\frac{g(\xi)-g(\eta_0)}{t(\xi)}\]
is decreasing with respect to the parametrisation $t(\xi)$.  
We then deduce that, for any $\xi\in\,]\eta_0,x[$ one has
\[\frac{g(\xi)}{t(\xi)}\geqslant\frac{g(\xi)-g(\eta_0)}{t(\xi)}\geqslant\mathrm{ord}_x(D),\]which implies that
\[\mu_x(g)=\lim_{t(\xi)\rightarrow +\infty}\frac{g(\xi)}{t(\xi)}=\mathrm{ord}_x(D).\]
Hence $\mu_x(g)=0$ for all but finitely many closed point $x$ in $X$. Finally, since $\overline D$ is pseudo-effective, the $\mathbb R$-Cartier divisor $D$ is pseudo-effective, and hence 
\[\mu_{\mathrm{tot}}(g)=\sum_{x\in X^{(1)}}\mathrm{ord}_x(D)\geqslant 0.\]
By Theorem \ref{Pro:Dirichlet}, we obtain that $(D,g)$ satisfies the Dirichlet property.
\end{proof}

\begin{Remark}
By the above corollary together with Proposition~\ref{prop:semiample:plurisubharmonic:endomorphism},
we can give an alternative proof of Corollary~\ref{cor:Dirichlet:endomorphism:curve}.
\end{Remark}

\bibliography{SuffDirichlet}
\bibliographystyle{plain}

\end{document}